\newtheorem{theorem}{Theorem}[section]
\newtheorem{lemma}[theorem]{Lemma}
\newtheorem{question}[theorem]{Question}
\newtheorem{definition}[theorem]{Definition}
\newtheorem{example}[theorem]{Example}
\newtheorem{prop}[theorem]{Proposition}
\newtheorem{corollary}[theorem]{Corollary}
\newtheorem{remark}[theorem]{Remark}
\numberwithin{equation}{section}
\begin{document}
\title[On the Closure of Absolutely Norm Attaining Operators]{On the Closure of Absolutely Norm Attaining Operators}
\author{G. Ramesh }
 \address{G. Ramesh, Department of Mathematics, IIT Hyderabad, Kandi, Sangareddy, Telangana- 502284, India.}
 \email{rameshg@math.iith.ac.in}
\author{Shanola S. Sequeira}
 \address{Shanola S. Sequeira, Department of Mathematics, IIT Hyderabad, Kandi, Sangareddy, Telangana- 502284, India.}
 \email{ma18resch11001@iith.ac.in}
\maketitle
\begin{abstract}
Let $H_1$ and $H_2$ be complex Hilbert spaces and $T:H_1\rightarrow H_2$ be a bounded linear operator. We say $T$ to be norm attaining, if there exists $x\in H_1$ with $\|x\|=1$ such that $\|Tx\|=\|T\|$. If for every closed subspace $M$ of $H_1$, the restriction $T|_{M}:M\rightarrow H_2$ is norm attaining then, $T$ is called absolutely norm attaining operator or $\mathcal{AN}$-operator. If we replace the norm of the operator by the minimum modulus $m(T)=\inf{\{\|Tx\|:x\in H_1,\; \|x\|=1}\}$, then $T$ is called the minimum attaining and the absolutely minimum attaining operator (or $\mathcal{AM}$-operator) respectively.

 In this article, we discuss about the operator norm closure of the $\mathcal{AN}$-operators.  We completely characterize operators in this closure and study several important properties. We mainly give the spectral characterization of the positive operators in this class and give the representation when the operator is normal. Later we also study the analogous properties for $\mathcal{AM}$-operators and prove that the closure of $\mathcal{AM}$-operators is same as that of the closure of $\mathcal{AN}$-operators. As a consequence, we prove similar results for operators in the norm closure of $\mathcal{AM}$-operators.
\end{abstract}

Keywords : Absolutely norm attaining operator, Absolutely minimum attaining operator, Essential spectrum, Compact operator, Partial isometry.

\section{Introduction}
It is always been a question of interest: for which pair of Banach spaces $X,Y$, the set of all norm attaining operators is dense in $\mathcal B(X,Y)$, the Banach space of bounded linear operators from $X$ into $Y$, with respect to the operator norm. This question created a lot of impact on the research in Functional Analysis and it was answered affirmatively for the Banach spaces when the co-domain is one dimensional by Bishop and Phelps \cite{BishopPhelps}. Lindenstrauss \cite{Lindenstrauss} proved that this result is also true for the Banach spaces when the domain is reflexive but not in general. We refer the expository paper by M. ACosta \cite{Acos denseness} for more details. Hence the set of all norm attaining operators is norm dense in the space of all bounded linear operators between the complex Hilbert spaces. Enflo et.al.\cite{EnfloKoverSmithies} gave a simple proof of this result. This motivates us to study the operator norm closure of a subclass of the set of all norm attaining operators, namely,  absolutely norm attaining operators which has been studied extensively in the recent times \cite{CarvjalNeves,PandeyPaulsen,RameshAN1,Rameshpara,venkuramesh}.

Since the class of $\mathcal{AN}$-operators contains compact operators, the closure of $\mathcal{AN}$-operators with respect to the strong operator topology is $\mathcal{B}(H)$, the space of all bounded linear operators defined on a Hilbert space $H$. In the present article we determine the closure of $\mathcal{AN}$-operators with respect to the operator norm of $\mathcal B(H)$ and we show that an operator is in the $\mathcal{AN}$-closure if and only if it is a compact perturbation of an $\mathcal{AN}$-partial isometry. This result helps us to look at a few important properties of elements in this class. We mainly characterize positive operators, self-adjoint operators and normal operators in this class and study their structures. We give a characterization of positive operators in the operator norm closure of $\mathcal{AN}$-operators in terms of the essential spectrum which is a generalization of a result in \cite{Rameshpara}. We also look at the characterization of positive $\mathcal{AN}$-operators found in \cite{venkuramesh} and that of positive $\mathcal{AM}$-operators found in \cite{GRS}, in a different way. Further, we study about a sufficient condition under which the adjoint of an operator in the closure of $\mathcal{AN}$-operators also belongs to the same class.

A  counterpart of  norm attaining operators is the class of minimum attaining operators. In this case also a Lindenstrauss type theorem is proved in \cite{SHKGRMIN}. Analogous to $\mathcal{AN}$-operators, we can define $\mathcal{AM}$-operators or the class of absolutely minimum attaining operators. A more detailed study of these operators can be found in \cite{NeeruRamesh,NBGR,GRS}. This class is a subset of the class of bounded operators with closed range, which distinguishes it  from that of $\mathcal{AN}$-operators. In this article we discuss the operator norm closure of this class of operators also. In fact, we conclude that the operator norm closure of $\mathcal{AN}$-operators and $\mathcal{AM}$-operators is the same. This fact allows  us to prove many results in the latter case with the help of the former one.  Results of this paper generalizes the corresponding results for $\mathcal{AN}$-operators as well as the $\mathcal{AM}$-operators, for example \cite{NeeruRamesh,NBGR,GRS,Rameshpara,venkuramesh}.

We organize the article as follows: In the second section we give the basic definitions and notations which are used throughout the paper. In the third section, we give the structure of general operators in the $\mathcal{AN}$-closure and some of their properties. In the fourth section we give a characterization of positive operators in closure of $\mathcal{AN}$-operators in terms of its essential spectrum and discuss the structure of positive operators. In the fifth section we discuss the structure theorem for self-adjoint and normal operators and in the sixth section we study the structure of operators in the $\mathcal{AM}$-closure and compare its properties with that of operators in $\mathcal{AN}$-closure.

\section{Definitions and Notations}
Throughout the paper, we let $H, H_1, H_2$ to denote  complex Hilbert spaces of arbitrary dimensions. A linear operator $T : H_1 \to H_2$ is said to be bounded if there exists a constant $k > 0$, such that $\|Tx\| \leq k \|x\|$, for all $x \in H_1$. The space of all bounded linear operators from $H_1$ to $H_2$ is denoted by $\mathcal{B}(H_1,H_2)$ and if $H_1 = H_2= H$, then we write this simply by $\mathcal{B}(H)$. The operator norm on $T \in \mathcal{B}(H_1, H_2)$ is given by,
\begin{equation*}
\|T\| = \sup\{\|Tx\| : x \in H_1, \|x\| = 1\}.
\end{equation*}
If $\mathcal{A} \subseteq \mathcal{B}(H_1, H_2)$, then we denote the closure of $\mathcal{A}$ with respect to the operator norm by $\overline{\mathcal{A}}$. The range and nullspaces of $T \in \mathcal{B}(H_1, H_2)$ are denoted by $R(T)$ and $N(T)$ respectively. Given $T \in \mathcal{B}(H_1, H_2)$, there exists a unique operator $T^* \in \mathcal{B}(H_2, H_1)$ such that
\begin{equation}
\langle Tx, y \rangle = \langle x, T^{*}y \rangle \quad \forall x \in H_1, y \in H_2. \nonumber
\end{equation}
The operator $T^*$ is called adjoint of $T$. If $T \in \mathcal{B}(H)$, then $T$ is called self-adjoint if $T = T^*$ and positive, if it is self-adjoint and $\langle Tx,x \rangle \geq 0$ for all $x \in H$. If $\mathcal{A} \subseteq \mathcal{B}(H)$, then the set of all positive operators in $\mathcal{A}$ is denoted by $\mathcal{A}_+$. An operator $T \in \mathcal{B}(H)$ is said to be normal if $T^*T = TT^*$. Let $T_1, T_2 \in \mathcal{B}(H)$ be self-adjoint. Then we say $T_1 \leq T_2$, if $T_2 - T_1 \in \mathcal{B}(H)_+$.

  If $T \in \mathcal{B}(H_1, H_2)$, then $T$ is said to be finite rank if $R(T)$ is finite dimensional and  compact if for every bounded set $B$ of $H_1$, $T(B)$ is a pre-compact set in $H_2$. The set of all finite rank and compact operators is denoted by $\mathcal{F}(H_1, H_2)$ and $\mathcal{K}(H_1, H_2)$ respectively. If $H_1 =H_2 = H$, then we denote them by  $\mathcal{F}(H)$ and $\mathcal{K}(H)$, respectively. If $M$ is a closed subspace of $H$, then we denote the orthogonal complement of $M$ by $M^\perp$ and the orthogonal projection onto $M$ by $P_{M}$.

   Given any positive operator $T \in \mathcal{B}(H)$, there exists a unique positive operator $S \in \mathcal{B}(H)$ such that $S^2 = T$. Then this operator $S$ is called the positive square root of $T$ and it is denoted by $T^{1/2}$.

   An operator $W \in \mathcal {B}(H_1, H_2)$ is called partial isometry, if
 \begin{equation}
 \|Wx\| = \|x\| ,\quad \forall x \in {N(W)}^{\perp}. \nonumber
 \end{equation}
  If $T \in \mathcal{B}(H_1, H_2)$, then $T^*T \in \mathcal{B}(H_1)_+$ and $|T| = (T^*T)^{1/2}$ is called the modulus of $T$. There exists a unique partial isometry $W \in \mathcal{B}(H_1, H_2)$ such that $T = W |T|$ and $N(W) = N(T)$. This is called the polar decomposition of $T$.

  	An operator $T \in \mathcal{B}(H)$ is said to be invertible if there exists unique $T^{-1} \in \mathcal{B}(H)$ such that $T T^{-1} = T^{-1}T = I$. We define the resolvent of $T$ denoted by $\rho(T)$ as
  \begin{equation*}
  \rho(T) = \{ \lambda \in \mathbb{C} : T - \lambda I \ \text{is}\; \text{invertible} \ \text{in} \ \mathcal{B}(H)\},
  \end{equation*}
  and $\sigma(T) := \mathbb{C} \setminus \rho(T)$ is defined as the spectrum of $T$.\\

  The spectrum of $T$ can be decomposed as the disjoint union of the point spectrum $\sigma_{p}(T)$, the residual spectrum $\sigma_{r}(T)$ and the continuous spectrum $\sigma_{c}(T)$ where,\\
  $\sigma_{p}(T):= \{\lambda \in \mathbb{C} : T-\lambda I \ \text{is}\; \text{not}\; \text{injective}\}$,\\
  $\sigma_{r}(T) := \{\lambda \in \mathbb{C} : T-\lambda I \ \text{is}\; \text{injective}\; \text{but}\; \overline{R(T-\lambda I)} \neq H\}$,\\
  $\sigma_{c}(T) := \sigma(T) \setminus (\sigma_{p}(T) \cup \sigma_{r}(T))$.

  The above basic details of opertor theory is found in \cite{conway,Gohberg,Halmosprobs,Kubrusly}.

\begin{definition}\cite[Definition 1.1, 1.2]{CarvjalNeves}
  An operator $T \in \mathcal{B}(H_1,H_2)$ is said to be norm attaining if there exists $x \in H_1$ with $\|x\| = 1$ such that $\|Tx\| = \|T\|$. If for every closed subspace $M$ of $H_1$, $T|_M : M \to H_2$ is norm attaining then $T$ is said to be absolutely norm attaining or $\mathcal{AN}$- operator.
\end{definition}
The set of all norm attaining and absolutely norm attaining operators from $H_1$ to $H_2$ is denoted by $\mathcal{N}(H_1, H_2)$ and $\mathcal{AN}(H_1,H_2)$ respectively. If $H_1= H_2 = H$, then we denote them by $\mathcal{N}(H)$ and $\mathcal{AN}(H)$ respectively.

More details and examples of $\mathcal{AN}$-operators can be found in \cite{CarvjalNeves, RameshOsaka1, PandeyPaulsen,RameshAN1,Rameshpara,venkuramesh}.

Analogues to $\mathcal{AN}$-operators, we have another class of operators called the Absolutely minimum attaining operators. In order to discuss this class of operators, we need the following definition of the minimum modulus of $T \in \mathcal{B}(H_1,H_2)$.
  \begin{equation*}
  m(T):= \inf\{\|Tx\| : x \in H_1, \|x\| = 1 \}.
  \end{equation*}
  \begin{definition}\cite[Definition 1.1, 1.4]{CarvajalNevesAM}
  	An operator $T \in \mathcal{B}(H_1, H_2)$ is said to be minimum attaining if there exists $x_0 \in H_1$ with $\|x_0\| = 1$ such that $m(T) = \|Tx_0\|$. If for every closed subspace $M$ of $H_1$, $T|_M : M \to H_2$ is minimum attaining then $T$ is said to be absolutely minimum attaining or $\mathcal{AM}$- operator.
  \end{definition}
The set of all $\mathcal{AM}$-operators from $H_1$ to $H_2$ is denoted by $\mathcal{AM}(H_1,H_2)$ and when $H_1=H_2=H$, we write $\mathcal{AM}(H,H) = \mathcal{AM}(H)$.

More details of $\mathcal{AM}$-operators can be found in \cite{ NeeruRamesh, CarvajalNevesAM,GRS}.
\begin{definition}\cite[Definition 2.1, Page 349]{conway}\label{defFredholm}
	If $T \in \mathcal{B}(H_1, H_2)$, then $T$ is said to be left semi- Fredholm if there exists $A \in \mathcal{B}(H_2, H_1)$ and $K_1 \in \mathcal{K}(H_1)$, such that $AT = K_1 + I$ and right semi-Fredholm if there exists $B \in \mathcal{B}(H_2,H_1)$ and $K_2 \in \mathcal{K}(H_2)$ such that $TB = K_2 + I$.
	
	If $T$ is both left semi-Fredholm and right semi-Fredholm then it is called Fredholm.
\end{definition}
\begin{definition}\label{ess_spectrum}\cite[Definiton 4.1, Page 358]{conway}
	Let $H$ be an infinite dimensional Hilbert space. For $T \in \mathcal{B}(H)$, the essential spectrum of $T$ defined as,
$\sigma_{ess}(T) = \sigma(\pi(T))$, where $\pi : \mathcal{B}(H) \to \mathcal{B}(H)/ \mathcal{K}(H)$ is the quotient map.
\end{definition}
For $ T = T^* \in \mathcal{B}(H)$, the spectrum of $T$ is the disjoint union of discrete and essential spectum of $T$, where the discrete spectrum $\sigma_{d}(T)$ is the set of all eigenvalues of $T$ of finite multiplicities which are isolated points of the spectrum $\sigma(T)$. The complement
  	set $ \sigma_{ess}(T) := \sigma(T) \setminus \sigma_{d}(T)$ is the essential spectrum of $T$ \cite[Definition 8.3, Page 178]{Konrad}.

  There is an another equivalent definition for the essential spectral point. This is explained in the following theorem.
  \begin{theorem}\label{ess spectrum of self-adjoint}\cite[Theorem VII.11, Page 236]{ReedSimon1}
	Let $H$ be an infinite dimensional Hilbert space and $T = T^* \in \mathcal{B}(H)$. Then $\lambda \in \sigma_{ess}(T)$ if and only one or more of the following conditions hold.
\begin{enumerate}
	\item $\lambda$ is an eigenvalue of infinite multiplicity.
	\item $\lambda$ is the limit point of $\sigma_{p}(T)$.
	\item $\lambda \in \sigma_{c}(T)$.
\end{enumerate}
\end{theorem}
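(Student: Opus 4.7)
The plan is to use the spectral theorem for bounded self-adjoint operators: writing $T=\int t\,dE(t)$, I would work through the standard criterion
\[
\lambda\in\sigma_{ess}(T) \iff \dim R\bigl(E((\lambda-\varepsilon,\lambda+\varepsilon))\bigr)=\infty \ \text{for every}\ \varepsilon>0.
\]
This criterion is the bridge between the Calkin-algebra definition of $\sigma_{ess}(T)$ and the concrete spectral data encoded in the conditions (1)--(3). I would establish it first, either by showing that an infinite dimensional spectral subspace on an $\varepsilon$-interval produces approximate eigenvectors orthogonal to any finite dimensional subspace (hence $T-\lambda I$ is not Fredholm), and conversely that failure of the infinite dimensional condition for some $\varepsilon$ yields a Riesz projection and a finite rank perturbation that makes $T-\lambda I$ invertible modulo $\mathcal{K}(H)$.

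For the $(\Leftarrow)$ direction I would treat each case separately, checking that $E((\lambda-\varepsilon,\lambda+\varepsilon))$ has infinite dimensional range. Case (1) is immediate from $E(\{\lambda\})\le E((\lambda-\varepsilon,\lambda+\varepsilon))$. Case (2) collects one unit eigenvector for each of infinitely many distinct eigenvalues inside $(\lambda-\varepsilon,\lambda+\varepsilon)$, which are mutually orthogonal since $T$ is self-adjoint. Case (3) uses that at a continuous spectrum point the spectral measure is non-atomic on small intervals, so the corresponding spectral subspace is infinite dimensional.

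For the $(\Rightarrow)$ direction, assume $\lambda\in\sigma_{ess}(T)$. Since $T$ is self-adjoint, $\sigma_r(T)=\emptyset$ and therefore $\sigma(T)=\sigma_p(T)\sqcup\sigma_c(T)$. If $\lambda\in\sigma_c(T)$, we land in (3). Otherwise $\lambda\in\sigma_p(T)$; if the eigenspace is infinite dimensional, (1) holds. The remaining case is a finite-multiplicity eigenvalue that nevertheless lies in $\sigma_{ess}(T)$. Here I would restrict $E$ to the orthogonal complement of the finite dimensional $\lambda$-eigenspace, a $T$-invariant subspace on which the spectral projections over shrinking punctured neighborhoods of $\lambda$ still have infinite rank. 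Splitting the restricted measure into its pure-point and continuous parts and passing to subsequences would produce either a sequence of distinct eigenvalues of $T$ converging to $\lambda$, giving (2), or a concentration of continuous spectral mass whose support reaches $\lambda$, placing $\lambda$ in $\sigma_c(T)$ and hence (3).

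The main obstacle is exactly this last step in the converse: ruling out an \emph{embedded} eigenvalue of finite multiplicity that sees neither nearby eigenvalues nor continuous spectrum at $\lambda$ itself. The delicate point is to argue that a non-trivial continuous part of the spectral measure on arbitrarily small punctured neighborhoods of $\lambda$ actually places $\lambda$ into $\sigma_c(T)$; this requires working with the pure-point/continuous decomposition of $E$ and relating the support of the continuous part to $\sigma_c(T)$ via the self-adjointness of $T$. Everything else is a bookkeeping exercise once the spectral-projection criterion is in hand.
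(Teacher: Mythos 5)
The paper offers no proof of this statement: it is quoted verbatim (as Theorem \ref{ess spectrum of self-adjoint}) from Reed--Simon, so there is no internal argument to compare against. Your strategy --- reducing everything to the criterion that $\lambda\in\sigma_{ess}(T)$ iff $\dim R\bigl(E((\lambda-\varepsilon,\lambda+\varepsilon))\bigr)=\infty$ for every $\varepsilon>0$, and then checking the three conditions against that criterion --- is the standard route and is essentially how the cited source proves it. The $(\Leftarrow)$ direction as you outline it is sound (for case (3) the cleanest phrasing is: if some spectral subspace $R(E((\lambda-\varepsilon,\lambda+\varepsilon)))$ were finite dimensional, then $\sigma(T)\cap(\lambda-\varepsilon,\lambda+\varepsilon)$ would consist of finitely many eigenvalues, forcing $\lambda$ to be an eigenvalue and hence not in $\sigma_c(T)$; the spectral measure need not be non-atomic near a continuous-spectrum point, since other eigenvalues may accumulate there).

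The step you yourself flag as the main obstacle is, however, a genuine gap that cannot be closed \emph{with the definition of $\sigma_c(T)$ used in this paper}, namely $\sigma_c(T)=\sigma(T)\setminus(\sigma_p(T)\cup\sigma_r(T))$. Consider $T=M_x\oplus\tfrac12$ on $L^2[0,1]\oplus\mathbb{C}$, where $M_x$ is multiplication by the variable. Then $\tfrac12\in\sigma_{ess}(T)$ (every spectral interval around $\tfrac12$ has infinite-dimensional range), yet $\tfrac12$ is an eigenvalue of multiplicity one, $\sigma_p(T)=\{\tfrac12\}$ has no limit points, and $\tfrac12\notin\sigma_c(T)$ precisely because it \emph{is} an eigenvalue. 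So in your remaining case of an embedded finite-multiplicity eigenvalue, the dichotomy ``nearby eigenvalues or $\lambda\in\sigma_c(T)$'' can fail outright. The theorem is true only with Reed--Simon's convention, in which the continuous spectrum means $\sigma\bigl(T|_{\mathcal{H}_{c}}\bigr)$ for $\mathcal{H}_{c}$ the continuous subspace of the spectral measure; under that convention $\tfrac12$ does lie in the continuous spectrum of the example, and your plan of splitting $E$ into pure-point and continuous parts and tracking the support of the continuous part completes the converse. So the fix is not a cleverer argument but the correct definition: carry out your last step for $\sigma\bigl(T|_{\mathcal{H}_{c}}\bigr)$ rather than for the set-theoretic complement $\sigma(T)\setminus(\sigma_p(T)\cup\sigma_r(T))$. (This mismatch is harmless for the paper's applications, which invoke the theorem only for positive or self-adjoint operators where the relevant point is shown to be a limit point of the spectrum or an infinite-multiplicity eigenvalue, but your proof as proposed would be proving a false statement.)
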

More details on essential spectrum can be found in \cite{conway,Muller,ReedSimon1}.  There are various definitions for the essential spectrum of a bounded operator in the literature. But all those definitions coincide when the underlying operator is self-adjoint. We refer to \cite[Section 3.14]{BarrySimon4} for this discussion and a few more historical account.
\begin{definition}\cite{Bouldin}
For $T \in \mathcal{B}(H)$, the essential minimum modulus of $T$ is defined as
\begin{equation*}
m_e(T) = \inf \{\lambda : \lambda \in \sigma_{ess}(|T|)\}.
\end{equation*}
\end{definition}

\begin{definition}\label{self-adjointdecomposition}\cite[Proposition 4.4.9, Page 158]{Pedersen}
	Let $T = T^*$. We define $T^+:=\frac{|T|+T}{2}$ and $T^-:= \frac{|T|-T}{2}$. Then $T = T^+ - T^-$. Note that both $T^{+}$ and $T^{-}$ are positive and $T^{+}T^{-}=0$.
\end{definition}
\begin{definition}\cite[Page 30]{conway}
	Let $\{H_i\}_{i \in \mathbb{N}}$ be the family of Hilbert Spaces and  $T_i \in \mathcal{B}(H_i)$ for all $i \in \mathbb{N}$.  Let $H  = \displaystyle{\bigoplus_{n \in \mathbb{N}}} H_n$. Then the direct sum $T = \displaystyle{\bigoplus_{n \in \mathbb{N}}} T_n$ on $H$ is defined as,
	\begin{equation*}
	T(x_1, x_2,\dots) = (T_1x_1, T_2x_2,\dots) \ \forall \ x_i \in H_i, i \in \mathbb{N} .
	\end{equation*}
If $\displaystyle{\sup_{i \in \mathbb{N}}} \{\|T_i\|\} < \infty$ then, $T \in \mathcal{B}(H)$ and $\|T\| = \displaystyle{\sup_{i \in \mathbb{N}}}\{\|T_i\|\}$.
\end{definition}
 \section{Closure of $\mathcal{AN}$-operators}
 First we recall a characterisation of positive $\mathcal{AN}$-operators which we will be using in our results.
 	\begin{theorem} \cite[Theorem 5.1]{PandeyPaulsen} \label{PandeypositiveAN}
	Let $H$ be a complex Hilbert space of arbitrary dimension and let $T \in \mathcal{B}(H)_+$. Then $T \in \mathcal{AN}(H)$ iff  $T = \alpha I + K + F$, where $\alpha \geq 0$, $K \in \mathcal{K}(H)_+$ and $F \in \mathcal{F}(H)$ is self-adjoint.
\end{theorem}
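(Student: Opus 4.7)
The plan is to prove the two implications separately, in both cases exploiting the spectral theorem for the positive self-adjoint operator $T$ together with the structure of its essential spectrum.

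For the sufficiency $(\Leftarrow)$, set $C=K+F$, which is compact self-adjoint, so that $T=\alpha I+C$. For an arbitrary closed subspace $M\subseteq H$, I would use the identity
\[
\|T|_{M}\|^{2}=\sup_{x\in M,\,\|x\|=1}\langle T^{2}x,x\rangle=\|P_{M}T^{2}P_{M}|_{M}\|,
\]
which reduces the question to analysing $P_{M}T^{2}P_{M}|_{M}=\alpha^{2}I_{M}+P_{M}(2\alpha C+C^{2})P_{M}|_{M}$, a compact self-adjoint perturbation of a scalar operator on $M$. When the supremum of its spectrum strictly exceeds $\alpha^{2}$, that value is an isolated eigenvalue of finite multiplicity, and any unit eigenvector attains $\|T|_{M}\|$. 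In the boundary case $\|T|_{M}\|=\alpha$, the positivity of $K$ together with $F$ being finite rank forces the negative spectral subspace of $2\alpha C+C^{2}$ to be finite-dimensional, so $P_{M}(2\alpha C+C^{2})P_{M}|_{M}$ is non-negative on a cofinite subspace of $M$; combined with the equality of its supremum spectrum to $0$, this yields a kernel vector of infinite multiplicity, and any such unit vector attains $\|T|_{M}\|=\alpha$.

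For the necessity $(\Rightarrow)$, assume $T\in\mathcal{AN}(H)_{+}$ and set $\alpha=m_{e}(T)\geq 0$, $S=T-\alpha I$. The first step is to prove that $\sigma_{\mathrm{ess}}(T)=\{\alpha\}$: if a distinct essential-spectrum point $\gamma$ existed, Theorem~\ref{ess spectrum of self-adjoint} would furnish an orthonormal sequence with $\|(T-\gamma I)e_{n}\|\to 0$, and splicing such approximate-eigenvector sequences at the two different essential points would build a closed infinite-dimensional subspace $M$ on which $\|T|_{M}\|$ is approached but never attained, contradicting the $\mathcal{AN}$ property. Since $\sigma_{\mathrm{ess}}(S)=\{0\}$, every non-zero point of $\sigma(S)$ is an isolated eigenvalue of finite multiplicity, and the spectral theorem expresses $S$ as a norm-convergent sum of finite-rank operators, hence $S$ is compact. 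Finally one must show that the negative part $S^{-}$ (in the sense of Definition~\ref{self-adjointdecomposition}) has finite rank: otherwise infinitely many negative eigenvalues of $S$ would accumulate at $0$ and the closed span of the corresponding eigenvectors would provide a subspace $M$ on which $\|T|_{M}\|=\alpha$ is approached but not attained. Taking $K=S^{+}$ (positive compact) and $F=-S^{-}$ (self-adjoint finite rank) then delivers $T=\alpha I+K+F$.

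The principal obstacle lies in the necessity direction, where one must concretely construct closed infinite-dimensional subspaces witnessing non-attainment from the spectral data of $T$. Both the singleton essential spectrum and the finiteness of the negative-eigenvalue count are detected by producing carefully chosen orthonormal sequences whose closed span carries a candidate supremum value that is approached but never reached on its unit sphere.
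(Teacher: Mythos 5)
This statement is quoted by the paper from \cite[Theorem 5.1]{PandeyPaulsen} and is not proved in the manuscript, so there is no in-paper argument to compare against; I am therefore judging your proposal on its own terms.

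Your overall architecture is correct. The sufficiency direction is sound and rather efficient: reducing norm attainment of $T|_{M}$ to the compression $P_{M}T^{2}P_{M}|_{M}=\alpha^{2}I_{M}+D$ with $D$ compact self-adjoint works, and the boundary case is handled correctly because $T\geq 0$ forces the spectral subspace of $T$ for $[0,\alpha)$ to have dimension at most $\operatorname{rank}F$; hence the quadratic form of $D$ is nonnegative on a finite-codimension subspace $M_{1}$ of $M$, which together with $D\leq 0$ places $M_{1}$ inside $N(D)$ and produces the attaining vector. In the necessity direction, the steps ``$\sigma_{ess}(T)$ singleton $\Rightarrow S=T-\alpha I$ compact'' and ``$S^{-}$ finite rank'' are the right skeleton, and the subspace you build from eigenvectors of infinitely many negative eigenvalues of $S$ is unobjectionable because it is $T$-invariant, so $T|_{M}$ is genuinely diagonal there.

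The one place where the proof is asserted rather than given is the splicing construction excluding two essential points $\gamma<\delta$. For $M=\overline{\operatorname{span}}\{a_{n}e_{n}+b_{n}f_{n}\}$ you must verify both that $\sup_{x\in M,\,\|x\|=1}\|Tx\|=\delta$ and that no unit vector of $M$ attains it, and this hinges on controlling the off-diagonal terms $\langle T^{2}g_{n},g_{m}\rangle$: one needs the spectral windows around $\gamma$ and $\delta$ to shrink at a summable (say geometric) rate while the mixing weights $a_{n}\to 0$ decay strictly more slowly, so that the per-term ``gain'' $a_{n}^{2}(\delta^{2}-\gamma^{2})$ dominates the window widths and the cross terms; a fixed choice of $a_{n}$, or windows shrinking like $1/n$, does not work. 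This can all be carried out, but it is the hardest part of the theorem and deserves the estimates. The route taken in the literature (including Pandey--Paulsen) sidesteps it by first proving that a positive $\mathcal{AN}$-operator is diagonalizable --- a positive norm-attaining operator attains its norm at an eigenvector, and one iterates on orthogonal complements --- after which every witness subspace is spanned by genuine eigenvectors, is $T$-invariant, and requires no error analysis. I would either supply the quantitative splicing estimates or insert diagonalizability as a preliminary lemma.
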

\begin{theorem} \cite[Corollary 2.11]{venkuramesh} \label{T*T AN}
Let $T \in \mathcal{B}(H_1 , H_2 )$. Then $T \in \mathcal{AN}(H_1 , H_2)$ if and only if $T^*T \in  \mathcal{AN}(H_1)$.
\end{theorem}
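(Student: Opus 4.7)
The plan is to reduce to the positive case via the polar decomposition and then translate between $|T|$ and $|T|^2=T^*T$ by applying the Pandey--Paulsen characterization (Theorem~\ref{PandeypositiveAN}) at each step.

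Writing $T = W|T|$ with $W$ a partial isometry and $N(W)=N(|T|)$, one has $\|Tx\| = \||T|x\|$ for every $x\in H_1$, because $|T|x\in\overline{R(|T|)}=N(W)^{\perp}$ where $W$ is isometric. Hence for every closed subspace $M\subseteq H_1$, $T|_M$ and $|T||_M$ share the same norm and the same set of norm-attaining unit vectors, so the statement reduces to showing $|T|\in\mathcal{AN}(H_1)_+$ if and only if $|T|^2\in\mathcal{AN}(H_1)_+$. For the forward direction, Theorem~\ref{PandeypositiveAN} gives $|T|=\alpha I + K + F$ with $\alpha\ge 0$, $K\in\mathcal{K}(H_1)_+$ and $F\in\mathcal{F}(H_1)$ self-adjoint; squaring,
\[
|T|^2 \;=\; \alpha^2 I + \bigl(2\alpha K + K^2\bigr) + \bigl(2\alpha F + KF + FK + F^2\bigr),
\]
whose middle summand is positive and compact, and whose last summand is self-adjoint and of finite rank, so Theorem~\ref{PandeypositiveAN} gives $|T|^2\in\mathcal{AN}(H_1)_+$.

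The main obstacle is the converse. Starting from $|T|^2=\beta I + K + F$ provided by Theorem~\ref{PandeypositiveAN}, I would first argue that $C := |T|-\sqrt{\beta}\,I$ is compact and self-adjoint: for $\beta>0$ this follows from the factorization $(|T|-\sqrt{\beta}\,I)(|T|+\sqrt{\beta}\,I) = |T|^2-\beta I\in\mathcal{K}(H_1)$ together with invertibility of $|T|+\sqrt{\beta}\,I$ (bounded below by $\sqrt{\beta}$); for $\beta=0$, $|T|^2$ is already compact and self-adjoint, hence so is $|T|=(|T|^2)^{1/2}$ by functional calculus. It remains to show that the negative spectral part $C^{-}$ has finite rank. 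Let $V$ denote its range: by the spectral theorem, any unit $x\in V$ satisfies $\langle |T|^2 x, x\rangle<\beta$, hence $\langle (K+F)x, x\rangle<0$, and $K\ge 0$ then forces $\langle Fx, x\rangle<0$, so $P_{R(F)}x\neq 0$. Thus $P_{R(F)}|_V$ is injective, giving $\dim V\le\operatorname{rank}(F)<\infty$. Writing $C = C^{+}-C^{-}$ spectrally with $C^{+}\in\mathcal{K}(H_1)_+$ and $C^{-}\in\mathcal{F}(H_1)_+$, the identity
\[
|T| \;=\; \sqrt{\beta}\,I + C^{+} + (-C^{-})
\]
fits the form of Theorem~\ref{PandeypositiveAN}, so $|T|\in\mathcal{AN}(H_1)_+$; combined with the first reduction this yields $T\in\mathcal{AN}(H_1,H_2)$.
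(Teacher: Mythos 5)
The paper does not actually prove this statement: it is imported verbatim as \cite[Corollary 2.11]{venkuramesh}, so there is no internal proof to compare against. Your argument is a correct, self-contained derivation from Theorem \ref{PandeypositiveAN} alone, and every step checks out. The reduction $T\in\mathcal{AN}(H_1,H_2)\Leftrightarrow|T|\in\mathcal{AN}(H_1)$ via $\|Tx\|=\||T|x\|$ is sound (and is itself \cite[Corollary 2.10]{venkuramesh}, used elsewhere in the paper). The forward direction by squaring the decomposition is routine. The genuinely nontrivial part is your converse: the factorization $(|T|-\sqrt{\beta}I)(|T|+\sqrt{\beta}I)=|T|^2-\beta I$ with $|T|+\sqrt{\beta}I$ bounded below gives compactness of $C=|T|-\sqrt{\beta}I$, and the finite-rank bound on $C^-$ is correctly obtained: for a unit vector $x$ in the span of the negative eigenvectors of $C$, positivity of $|T|$ forces the corresponding eigenvalues $\sqrt{\beta}+\lambda_n$ of $|T|$ into $[0,\sqrt{\beta})$, so $\langle|T|^2x,x\rangle<\beta$, whence $\langle Fx,x\rangle<0$ and $P_{R(F)}x\neq 0$, giving $\dim V\le\operatorname{rank}F$. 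The degenerate case $\beta=0$ is also handled. This is a clean alternative to relying on the external structure theorem \cite[Theorem 2.5]{venkuramesh} (which carries the extra constraints $KF=0$, $F\le\alpha I$); your route buys self-containedness at the cost of a slightly longer spectral argument, and would be a reasonable appendix-style proof if the authors wished to avoid the citation.
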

	
Now we discuss some properties of $\mathcal{AN}$-operators which are useful in determining the closure of it.

\begin{prop}\label{AN+Finiterank}
	If $F \in \mathcal{F}(H_1,H_2)$ and $T \in \mathcal{AN}(H_1,H_2)$, then $T+F \in \mathcal{AN}(H_1,H_2)$.
\end{prop}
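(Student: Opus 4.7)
The plan is to reduce to the positive case and then apply the structure theorem for positive $\mathcal{AN}$-operators (Theorem~\ref{PandeypositiveAN}) combined with the transfer result (Theorem~\ref{T*T AN}).

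First, by Theorem~\ref{T*T AN}, it suffices to prove that $(T+F)^*(T+F) \in \mathcal{AN}(H_1)$. Expanding,
\begin{equation*}
(T+F)^*(T+F) \;=\; T^*T + \bigl(T^*F + F^*T + F^*F\bigr).
\end{equation*}
Since $F \in \mathcal{F}(H_1,H_2)$, each of $T^*F$, $F^*T$, and $F^*F$ is finite rank, so $G := T^*F + F^*T + F^*F$ is a self-adjoint finite rank operator in $\mathcal{F}(H_1)$.

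Next, because $T \in \mathcal{AN}(H_1,H_2)$, Theorem~\ref{T*T AN} gives $T^*T \in \mathcal{AN}(H_1)_+$, and Theorem~\ref{PandeypositiveAN} then supplies a decomposition
\begin{equation*}
T^*T \;=\; \alpha I + K + F_0, \qquad \alpha \geq 0,\; K \in \mathcal{K}(H_1)_+,\; F_0 = F_0^* \in \mathcal{F}(H_1).
\end{equation*}
Substituting this into the expansion above yields
\begin{equation*}
(T+F)^*(T+F) \;=\; \alpha I + K + (F_0 + G),
\end{equation*}
and $F_0 + G$ is again self-adjoint and finite rank. Since the left-hand side is automatically positive, it has exactly the form described in Theorem~\ref{PandeypositiveAN}, so $(T+F)^*(T+F) \in \mathcal{AN}(H_1)$. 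Applying Theorem~\ref{T*T AN} in the other direction finishes the proof.

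The only place where care is needed is the verification that $G = T^*F + F^*T + F^*F$ is finite rank, which is immediate from the standard representation of finite rank operators as finite sums $F = \sum_i \langle\,\cdot\,,x_i\rangle y_i$; there is no genuine obstacle beyond assembling the three cited results.
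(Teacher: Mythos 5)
Your proposal is correct and follows essentially the same route as the paper: reduce via Theorem~\ref{T*T AN} to showing $(T+F)^*(T+F)\in\mathcal{AN}(H_1)$, expand to get $T^*T$ plus a self-adjoint finite rank perturbation, and apply the decomposition of Theorem~\ref{PandeypositiveAN} in both directions. No gaps.
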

\begin{proof}
	Let $A = T+F$. Then $A^*A = (T^*+F^*)(T+F)= T^*T+F_1$, where $F_1 = T^*F+F^*T+F^*F \in \mathcal{F}(H_1)$ and $F^*_1=F_1$. By using Theorem \ref{T*T AN} and Theorem \ref{PandeypositiveAN}, we have $T^*T = \alpha I + K + F_2$, where $\alpha \geq 0, K \in \mathcal{K}(H_1)_+$ and $F_2 =F^*_2 \in \mathcal{F}(H_1)$. Then $A^*A = \alpha I + K + \tilde{F}$, where $\tilde{F} = F_2 + F_1$ and ${\tilde{F}}^* =\tilde{F}$. So by Theorem \ref{PandeypositiveAN}, we get $A^*A \in \mathcal{AN}(H_1)$. Hence by Theorem \ref{T*T AN},  $A \in \mathcal{AN}(H_1,H_2)$.
\end{proof}
\begin{corollary}\label{partialisometry+finiterank}
 If $\alpha \geq 0$, $F \in \mathcal{F}(H_1,H_2)$ and $ W \in \mathcal{AN}(H_1,H_2)$ is a partial isometry, then $\alpha W + F \in \mathcal{AN}(H_1,H_2)$.
 \end{corollary}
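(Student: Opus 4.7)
The plan is to reduce the corollary to the preceding Proposition \ref{AN+Finiterank}. Specifically, once I establish that $\alpha W \in \mathcal{AN}(H_1, H_2)$, applying that proposition to $\alpha W$ and $F \in \mathcal{F}(H_1,H_2)$ immediately yields $\alpha W + F \in \mathcal{AN}(H_1, H_2)$. So the entire content of the corollary is the assertion that $\mathcal{AN}(H_1, H_2)$ is closed under multiplication by a nonnegative scalar, at least for the partial-isometry argument at hand.

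The case $\alpha = 0$ is immediate, since the zero operator is trivially $\mathcal{AN}$: on any nonzero closed subspace $M$, the norm $\|0|_M\| = 0$ is attained at every unit vector. For $\alpha > 0$, I would give a direct verification from the definition. Fix a closed subspace $M \subseteq H_1$. Then $\|(\alpha W)|_M\| = \alpha\,\|W|_M\|$, and since $W \in \mathcal{AN}(H_1, H_2)$ there exists $x \in M$ with $\|x\|=1$ such that $\|Wx\| = \|W|_M\|$; the same $x$ then satisfies $\|(\alpha W)x\| = \alpha\|Wx\| = \|(\alpha W)|_M\|$, so $(\alpha W)|_M$ is norm attaining.

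An alternative, which is closer in spirit to the proof of Proposition \ref{AN+Finiterank}, is to compute $(\alpha W)^*(\alpha W) = \alpha^2 W^*W$ and use Theorems \ref{T*T AN} and \ref{PandeypositiveAN}. Since $W \in \mathcal{AN}(H_1, H_2)$, Theorem \ref{T*T AN} gives $W^*W \in \mathcal{AN}(H_1)_+$, so by Theorem \ref{PandeypositiveAN} we can write $W^*W = \beta I + K + F'$ with $\beta \geq 0$, $K \in \mathcal{K}(H_1)_+$, and $F' = (F')^* \in \mathcal{F}(H_1)$. Multiplying by $\alpha^2$ preserves this decomposition, so $(\alpha W)^*(\alpha W) \in \mathcal{AN}(H_1)_+$ and Theorem \ref{T*T AN} yields $\alpha W \in \mathcal{AN}(H_1, H_2)$.

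There is essentially no obstacle here; the only thing to watch out for is the degenerate $\alpha = 0$ case, which must be treated separately because the scaling argument via $(\alpha W)^*(\alpha W)$ becomes vacuous. Modulo that, the corollary is a direct combination of closure of $\mathcal{AN}$ under nonnegative scaling and the closure under finite-rank perturbation established in Proposition \ref{AN+Finiterank}.
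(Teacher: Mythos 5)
Your proposal is correct and matches the paper's (implicit) reasoning: the corollary is stated without proof precisely because it follows from Proposition \ref{AN+Finiterank} once one notes that $\alpha W \in \mathcal{AN}(H_1,H_2)$, which you verify by the obvious scaling argument. Your additional route via $(\alpha W)^*(\alpha W)$ is fine but unnecessary; the direct definitional check already covers $\alpha=0$ as well.
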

\begin{prop}\label{AN+compact}
	If $K \in \mathcal{K}(H_1,H_2)$ and $T\in \mathcal {AN}(H_1,H_2)$, then $T+K \in \overline{\mathcal{AN}(H_1,H_2)}$.
\end{prop}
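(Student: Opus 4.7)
The plan is to exploit the density of finite rank operators in the compact operators together with Proposition \ref{AN+Finiterank}. Since $K \in \mathcal{K}(H_1,H_2)$, there exists a sequence $\{F_n\}_{n \in \mathbb{N}} \subseteq \mathcal{F}(H_1,H_2)$ with $\|F_n - K\| \to 0$ as $n \to \infty$ (this is the standard approximation of compact operators between Hilbert spaces by finite rank operators).

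Next, for each $n$, I would apply Proposition \ref{AN+Finiterank} to the pair $T \in \mathcal{AN}(H_1, H_2)$ and $F_n \in \mathcal{F}(H_1, H_2)$ to conclude that $T + F_n \in \mathcal{AN}(H_1,H_2)$.

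Finally, observing that
\begin{equation*}
\|(T+K) - (T+F_n)\| = \|K - F_n\| \to 0 \quad \text{as} \quad n \to \infty,
\end{equation*}
the operator $T+K$ is the operator-norm limit of the sequence $\{T + F_n\} \subseteq \mathcal{AN}(H_1, H_2)$, hence lies in $\overline{\mathcal{AN}(H_1,H_2)}$.

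There is essentially no obstacle here; the proof is a direct two-line consequence of Proposition \ref{AN+Finiterank} and the fact that $\mathcal{F}(H_1,H_2)$ is operator-norm dense in $\mathcal{K}(H_1,H_2)$.
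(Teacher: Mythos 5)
Your proof is correct and follows exactly the same route as the paper: approximate $K$ by finite rank operators $F_n$, apply Proposition \ref{AN+Finiterank} to get $T+F_n \in \mathcal{AN}(H_1,H_2)$, and pass to the operator-norm limit. Nothing is missing.
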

\begin{proof}
 Since $K \in \mathcal{K}(H_1,H_2)$, there exists a sequence $\{F_n\} \subset \mathcal{F}(H_1,H_2)$ such that $F_n \to K$ as $n \to \infty$. Then,
 \begin{equation}
   T + K = \lim_{n \to \infty}(T + F_n). \nonumber
  \end{equation}
  From Proposition \ref{AN+Finiterank}, we have $T + F_n \in \mathcal{AN}(H_1,H_2)$ for all $ n \in \mathbb{N}$. Hence $T+K \in \overline{\mathcal{AN}(H_1,H_2)}.$
\end{proof}
\begin{corollary}\label{partialisometryAN+compact}
	If $A = \alpha W + K$, where $\alpha \geq 0$, $W \in \mathcal{AN}(H_1,H_2)$ is a partial isometry and $K \in \mathcal{K} (H_1,H_2)$, then $A \in \overline{\mathcal{AN}(H_1,H_2)}$.
\end{corollary}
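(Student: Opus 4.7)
The plan is to apply Proposition \ref{AN+compact} directly, after noting that $\alpha W$ belongs to $\mathcal{AN}(H_1,H_2)$. The corollary is essentially the compact analogue of Corollary \ref{partialisometry+finiterank}, and the structure of the argument should mirror it.

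First I would observe that the class $\mathcal{AN}(H_1,H_2)$ is closed under multiplication by a non-negative scalar. Indeed, if $W \in \mathcal{AN}(H_1,H_2)$ and $M$ is any closed subspace of $H_1$, then for every unit vector $x \in M$ one has $\|(\alpha W)|_M\, x\| = \alpha \|W|_M\, x\|$, so the supremum is attained at the same vector where $\|W|_M\|$ is attained. Hence $\alpha W \in \mathcal{AN}(H_1,H_2)$ whenever $\alpha \geq 0$. In particular this holds when $W$ is an $\mathcal{AN}$-partial isometry.

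Next I would apply Proposition \ref{AN+compact} with $T := \alpha W$ and the given compact operator $K \in \mathcal{K}(H_1,H_2)$. Since $T \in \mathcal{AN}(H_1,H_2)$ by the previous step and $K$ is compact, the proposition yields
\[
A = \alpha W + K = T + K \in \overline{\mathcal{AN}(H_1,H_2)},
\]
which is exactly the conclusion.

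There is no real obstacle here: the only point that needs to be explicitly noted is the trivial fact that $\mathcal{AN}(H_1,H_2)$ is stable under scaling by $\alpha \geq 0$, and then the result reduces to Proposition \ref{AN+compact}. The proof should therefore be only a few lines long.
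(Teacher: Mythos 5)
Your proposal is correct and follows exactly the route the paper intends: the paper states this as an immediate corollary of Proposition \ref{AN+compact}, the only unstated step being that $\alpha W \in \mathcal{AN}(H_1,H_2)$ for $\alpha \geq 0$, which you justify properly. Nothing further is needed.
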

It is observed that the converse of Corollary \ref{partialisometryAN+compact} is also true. Hence we give the characterization for the operators in $\overline{\mathcal{AN}(H_1, H_2)}$.
\begin{theorem}\label{general AN-closure structure}
Let $T \in \overline{\mathcal{AN}(H_1,H_2)}$ and $T = W|T|$ be the polar decomposition of $T$. Then $T = \alpha W + K$, for some $K \in \mathcal{K}(H_1,H_2)$ and $\alpha \geq 0$.
\end{theorem}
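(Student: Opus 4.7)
Pick a sequence $T_n \in \mathcal{AN}(H_1,H_2)$ with $T_n \to T$ in operator norm. By Theorem \ref{T*T AN} each $T_n^*T_n$ lies in $\mathcal{AN}(H_1)_+$, so Theorem \ref{PandeypositiveAN} gives a decomposition
$$T_n^*T_n \;=\; \alpha_n I + C_n, \qquad \alpha_n \ge 0,$$
where $C_n = K_n + F_n$ is self-adjoint compact (with $K_n$ positive compact and $F_n$ finite rank self-adjoint). Continuity of the adjoint and multiplication gives $T_n^*T_n \to T^*T$ in norm. The goal is to upgrade this into a single ``compact perturbation of a scalar'' expression for $T^*T$, and then transfer this to $|T|$ via the square root, and finally to $T$ via the polar decomposition.

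The main step, which I expect to be the only genuine obstacle, is showing that the scalars $\{\alpha_n\}$ form a Cauchy sequence (a priori different approximants $T_n$ of $T$ could produce wildly different $\alpha_n$). For this I use the essential spectrum. The operator $T_n^*T_n - T_m^*T_m = (\alpha_n - \alpha_m)I + (C_n - C_m)$ is a compact perturbation of $(\alpha_n-\alpha_m)I$, hence by Definition \ref{ess_spectrum} its essential spectrum is $\{\alpha_n - \alpha_m\}$. Since $\sigma_{ess}(A)\subseteq \sigma(A)$ and $T_n^*T_n - T_m^*T_m$ is self-adjoint, so
$$|\alpha_n - \alpha_m| \;\le\; \|T_n^*T_n - T_m^*T_m\| \;\longrightarrow\; 0.$$
Thus $\alpha_n \to \beta$ for some $\beta \ge 0$, and consequently $C_n = T_n^*T_n - \alpha_n I \to T^*T - \beta I$. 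Because $\mathcal{K}(H_1)$ is norm closed, $C := T^*T - \beta I$ is self-adjoint compact, that is,
$$T^*T \;=\; \beta I + C.$$

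To pass from $T^*T$ to $|T|$, take the positive square root. If $\beta > 0$, the operator $|T| + \sqrt{\beta}\,I \ge \sqrt{\beta}\,I$ is invertible, and the factorisation
$$|T| - \sqrt{\beta}\,I \;=\; \bigl(T^*T - \beta I\bigr)\bigl(|T| + \sqrt{\beta}\,I\bigr)^{-1}$$
displays $|T| - \sqrt{\beta}\,I$ as a compact operator composed with a bounded one, hence compact. If $\beta = 0$, then $T^*T = C$ is compact, and $|T| = C^{1/2}$ is compact as the positive square root of a positive compact operator. In either case, setting $\alpha := \sqrt{\beta} \ge 0$, we obtain $|T| = \alpha I + K'$ for some $K' \in \mathcal{K}(H_1)$.

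Finally, using the polar decomposition $T = W|T|$, we conclude
$$T \;=\; W(\alpha I + K') \;=\; \alpha W + W K',$$
and $K := W K' \in \mathcal{K}(H_1,H_2)$ since the compact operators form a two-sided ideal. This produces the desired representation $T = \alpha W + K$ with $\alpha \ge 0$ and $K$ compact, completing the proof.
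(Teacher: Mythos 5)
Your proof is correct, but it takes a noticeably different route from the paper's. The paper works directly with the moduli: from $T_n\to T$ it deduces $|T_n|\to|T|$ (citing the norm-continuity of $T\mapsto |T|$ from Reed--Simon), decomposes each $|T_n|=\alpha_n I+K_n+F_n$ via Theorem \ref{PandeypositiveAN}, observes only that $\{\alpha_n\}$ is \emph{bounded} (since $\alpha_n\in\sigma(|T_n|)$), and extracts a convergent subsequence $\alpha_{n_k}\to\alpha$; then $|T_{n_k}|-\alpha_{n_k}I$ converges to a compact limit and $|T|=\alpha I+K_1$ follows immediately, with no square-root extraction needed. You instead decompose $T_n^*T_n$, prove the \emph{full} sequence $\{\alpha_n\}$ is Cauchy via the essential-spectrum invariance under compact perturbation, and then recover $|T|-\sqrt{\beta}\,I$ compact from $T^*T-\beta I$ compact by the factorisation $|T|-\sqrt{\beta}\,I=(T^*T-\beta I)(|T|+\sqrt{\beta}\,I)^{-1}$. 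What your approach buys: it avoids invoking continuity of the modulus map, and the essential-spectrum argument cleanly shows the scalar is canonical (independent of the approximating sequence), which the paper only establishes later. What it costs: the extra square-root step, and a small gap in generality --- your essential-spectrum argument presupposes $H_1$ infinite dimensional (in finite dimensions $\sigma_{ess}$ as defined in the paper is empty and the inequality $|\alpha_n-\alpha_m|\le\|T_n^*T_n-T_m^*T_m\|$ has no content), whereas the statement is for spaces of arbitrary dimension; you should either dispose of the finite-dimensional case separately (where everything is compact and the claim is trivial with $\alpha=0$) or fall back on the boundedness-plus-subsequence argument, which works uniformly.
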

\begin{proof}
Since $T \in \overline{\mathcal{AN}(H_1,H_2)}$, there exists a sequence $\{T_n\}$ of $\mathcal{AN}$-operators, such that $T_n \to T$ as $n \to \infty$ in the operator norm. Then $|T_n| \to |T|$ as $n \to \infty$ by \cite[Problem 15, Page 217]{ReedSimon1}. As $T_n \in \mathcal{AN}(H_1,H_2)$, we have $|T_n|\in \mathcal{AN}(H_1)$. Hence by Theorem \ref{PandeypositiveAN}, we get $|T_n|= \alpha_n I + K_n + F_n$ where $\alpha_n \geq 0, \ K_n \in \mathcal{K}(H_1)_+$ and $F^*_n= F_n \in \mathcal{F}(H_1)$ for all $n \in \mathbb{N}$. As $\alpha_n \in \sigma(|T_n|)$, we have $\alpha_n\leq \||T_n|\|=\|T_n\|<\displaystyle{\sup_{n}}\|T_n\|<\infty$, so $\{\alpha_n\}$ is bounded. Hence there exists a subsequence say, $\{\alpha_{n_k}\}$ such that $\alpha_{n_k} \to \alpha$ as $k \to \infty$ for some $\alpha \geq 0.$ Then $K_{n_k} + F_{n_k} \to K_1$, for some $K^*_1 = K_1 \in \mathcal{K}(H_1)$. Hence $|T_{n_k}| \to \alpha I + K_1$ as $k \to \infty$. This implies $|T| = \alpha I + K_1$. Thus $T = \alpha W + K$, where $K = W K_1 \in \mathcal{K}(H_1,H_2)$.
\end{proof}

\begin{corollary}\label{properties2}
Let $T\in \overline{\mathcal{AN}(H_1,H_2)}$ and $T\notin \mathcal{K}(H_1,H_2)$ . Then
\begin{enumerate}
  \item \label{fdmlkernel} $N(T)$ is finite dimensional	and $W\in \mathcal{AN}(H_1,H_2)$
\item \label{closedrange}  $R(T)$ is closed
\item \label{leftsemifredholm} $T$  is left semi-Fredholm operator.
\end{enumerate}
\end{corollary}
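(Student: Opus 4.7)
The plan starts by extracting more information from Theorem \ref{general AN-closure structure}, specifically the intermediate step $|T| = \alpha I + K_1$ with $K_1 = K_1^* \in \mathcal{K}(H_1)$ and $\alpha \geq 0$. First I would rule out the degenerate case $\alpha = 0$: if $\alpha = 0$ then $|T| = K_1$ is compact, so $T^*T$ is compact, hence $|T|$ is compact and so is $T = W|T|$, contradicting $T \notin \mathcal{K}(H_1, H_2)$. Thus \emph{throughout the proof we may assume $\alpha > 0$}, and this is the single point on which everything else pivots.

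For part \eqref{fdmlkernel}, I would use $N(T) = N(|T|) = N(\alpha I + K_1)$, which is the $(-\alpha)$-eigenspace of the compact self-adjoint operator $K_1$; since $\alpha > 0$, $-\alpha \neq 0$ and the spectral theory of compact operators forces this eigenspace to be finite dimensional. To show $W \in \mathcal{AN}(H_1, H_2)$, I would invoke Theorem \ref{T*T AN} and instead verify $W^*W \in \mathcal{AN}(H_1)_+$. Since $W$ is a partial isometry, $W^*W = P_{N(W)^\perp} = I - P_{N(T)}$, and because $N(T)$ is finite dimensional, $P_{N(T)}$ is a finite rank self-adjoint projection. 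Thus $W^*W$ has the form $1 \cdot I + 0 + (-P_{N(T)})$, which matches the decomposition $\alpha I + K + F$ of Theorem \ref{PandeypositiveAN}.

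For part \eqref{closedrange}, the key observation is spectral: $\sigma(|T|) = \alpha + \sigma(K_1)$, and $\sigma(K_1)$ is at most countable with $0$ the only possible limit point. Hence $\alpha$ is the only possible accumulation point of $\sigma(|T|)$, and since $\alpha > 0$, the point $0$ is either not in $\sigma(|T|)$ or is isolated. In either case $|T|$ has closed range. Then $R(T) = W(R(|T|))$, and since $R(|T|) \subseteq \overline{R(|T|)} = N(|T|)^\perp = N(W)^\perp$ on which $W$ is an isometry, the image of a closed set is closed, giving $R(T)$ closed.

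For part \eqref{leftsemifredholm}, I would produce an explicit left parametrix. Setting $A = \tfrac{1}{\alpha} W^* \in \mathcal{B}(H_2, H_1)$ and computing using $T = \alpha W + K$,
\begin{equation*}
AT = \tfrac{1}{\alpha} W^*(\alpha W + K) = W^*W + \tfrac{1}{\alpha} W^*K = I - P_{N(T)} + \tfrac{1}{\alpha} W^*K = I + K',
\end{equation*}
where $K' := -P_{N(T)} + \tfrac{1}{\alpha} W^*K \in \mathcal{K}(H_1)$ since $P_{N(T)}$ is finite rank and $W^*K$ is compact. By Definition \ref{defFredholm}, $T$ is left semi-Fredholm. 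The only subtle step in the whole proof is the justification $W \in \mathcal{AN}(H_1,H_2)$, which is why I would route it through $W^*W$ and Theorem \ref{PandeypositiveAN} rather than try to handle arbitrary closed subspaces directly; everything else is a direct consequence of $\alpha > 0$ together with the compactness of $K_1$.
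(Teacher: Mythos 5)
Your proposal is correct and follows essentially the same route as the paper: everything rests on the identity $|T| = \alpha I + K_1$ from Theorem \ref{general AN-closure structure} together with the observation that $T \notin \mathcal{K}(H_1,H_2)$ forces $\alpha > 0$, after which the finite-dimensionality of $N(T)$, the closedness of $R(T)$, and the left parametrix $\tfrac{1}{\alpha}W^*$ all follow as in the paper's argument. The only cosmetic difference is that you derive $W \in \mathcal{AN}(H_1,H_2)$ self-containedly from Theorems \ref{PandeypositiveAN} and \ref{T*T AN} applied to $W^*W = I - P_{N(T)}$, whereas the paper cites an external result of Carvajal and Neves on partial isometries with finite-dimensional kernel; both are valid.
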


\begin{proof}
From Theorem \ref{general AN-closure structure}, we have
\begin{equation}\label{propeq1}
|T| = \alpha I + K_1,\, \text{ for some}\; \alpha \geq 0\; \text{and}\; K_1 \in \mathcal{K}(H).
\end{equation}
Proof of (\ref{fdmlkernel}):  If $x \in N(T)$, then $x \in N(|T|)$ and $\alpha x = -K_1x$. Hence $\alpha I|_{N(|T|)}$ is a compact operator. This implies $N(|T|)$ is finite dimensional. Hence $N(T)$ is finite dimensional. Also, since $N(|T|) = N(W)$ we get, $N(W)$ is finite dimensional and $W \in \mathcal{AN}(H_1,H_2)$ \cite[Proposition 3.14]{CarvjalNeves}.

Proof of (\ref{closedrange}): Since $T \notin \mathcal{K}(H_1, H_2)$ we have $\alpha>0$. Hence by Equation \eqref{propeq1},  $R(|T|)$ is closed, consequently,  $R(T)$ is closed.

Proof of (\ref{leftsemifredholm}) Let $T = W|T|$ be the polar decomposition of $T$. Then $|T| = W^*T= \alpha I + K_1$, by Equation \eqref{propeq1}. Hence by Definition \ref{defFredholm}, $T$ is left semi-Fredholm.
\end{proof}
\begin{corollary}\label{property3}
Let $V\in \mathcal B(H_1,H_2)$ be a partial isometry. Then $V\in \mathcal{AN}(H_1,H_2)$ if and only if $V\in \overline{\mathcal{AN}(H_1,H_2)}$.
\end{corollary}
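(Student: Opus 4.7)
The forward implication is immediate since $\mathcal{AN}(H_1,H_2)\subseteq \overline{\mathcal{AN}(H_1,H_2)}$, so the real content is the reverse direction: a partial isometry lying in the $\mathcal{AN}$-closure must itself be $\mathcal{AN}$. My plan is to split on whether $V$ is compact, and in the non-compact case to extract the conclusion from Corollary \ref{properties2}(\ref{fdmlkernel}) by observing that the polar decomposition of a partial isometry returns the partial isometry itself.

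First I would dispose of the compact case. If $V$ is a compact partial isometry, then $V$ restricted to $N(V)^\perp$ is an isometry; a compact isometry can only exist on a finite dimensional space, so $N(V)^\perp$ is finite dimensional and $V$ has finite rank. Since finite rank (hence compact) operators are contained in $\mathcal{AN}(H_1,H_2)$, we conclude $V\in \mathcal{AN}(H_1,H_2)$.

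Next I would handle the case $V\notin \mathcal{K}(H_1,H_2)$. The key observation is that for a partial isometry $V$, one has $V^*V = P_{N(V)^\perp}$, so $|V|=P_{N(V)^\perp}$, and the polar decomposition reads $V = V\cdot |V|$, i.e.\ the partial isometry $W$ appearing in the polar decomposition is $V$ itself (this is forced by uniqueness together with the defining conditions $V=W|V|$ and $N(W)=N(V)$). Now apply Corollary \ref{properties2}(\ref{fdmlkernel}) to the operator $V\in \overline{\mathcal{AN}(H_1,H_2)}\setminus \mathcal{K}(H_1,H_2)$: this gives that the partial isometry in the polar decomposition belongs to $\mathcal{AN}(H_1,H_2)$, and by the previous sentence that partial isometry is $V$.

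I do not expect any significant obstacle here; the only delicate point is the bookkeeping for the polar decomposition of a partial isometry, which is standard. Combining the two cases completes the proof of the nontrivial direction.
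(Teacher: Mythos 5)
Your proof is correct and follows essentially the same route as the paper: both split on whether $V$ is compact and, in the non-compact case, reduce the claim to Corollary \ref{properties2}(\ref{fdmlkernel}). The only cosmetic difference is that you use the clause $W\in\mathcal{AN}(H_1,H_2)$ of that corollary together with the observation that $W=V$ in the polar decomposition of a partial isometry, whereas the paper uses the clause that $N(V)$ is finite dimensional and then cites \cite[Proposition 3.14]{CarvjalNeves} directly.
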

\begin{proof}
	If $V \in \mathcal{AN}(H_1, H_2)$, then clearly $V \in \overline{\mathcal{AN}(H_1, H_2)}$.
	
	Conversely let $V \in \overline{\mathcal{AN}(H_1,H_2)}$. If $V \in \mathcal{K}(H_1, H_2)$, then $V \in \mathcal{AN}(H_1,H_2)$. If $V \notin \mathcal{K}(H_1, H_2)$, then by Corollary \ref{properties2}(\ref{fdmlkernel}) we have, $N(V)$ is finite dimensional. Hence $V \in \mathcal{AN}(H_1,H_2)$ by \cite[Proposition 3.14]{CarvjalNeves}.
\end{proof}

Next we discuss some of the properties of operators in the set  $\overline{\mathcal{AN}(H)}$.
\begin{lemma}\cite[Theorem 2.2]{RameshOsaka1} \label{productinAN}
	If $A, B \in \mathcal{AN}(H)$, then $AB \in \mathcal{AN}(H)$.
\end{lemma}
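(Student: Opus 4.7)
The plan is to reduce the problem to a statement about positive operators using Theorem \ref{T*T AN}, and then exploit the structural decomposition provided by Theorem \ref{PandeypositiveAN}. Specifically, I would show that $AB\in\mathcal{AN}(H)$ by proving $(AB)^*(AB)=B^*A^*AB$ admits a decomposition of the form $\gamma I+\tilde{K}+\tilde{F}$ with $\gamma\ge 0$, $\tilde{K}\in\mathcal{K}(H)_{+}$, and $\tilde{F}=\tilde{F}^{*}\in\mathcal{F}(H)$.

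First, since $A\in\mathcal{AN}(H)$, Theorem \ref{T*T AN} gives $A^{*}A\in\mathcal{AN}(H)_{+}$, so by Theorem \ref{PandeypositiveAN} we can write $A^{*}A=\alpha I+K+F$ with $\alpha\ge 0$, $K\in\mathcal{K}(H)_{+}$, and $F=F^{*}\in\mathcal{F}(H)$. Similarly, $B\in\mathcal{AN}(H)$ yields $B^{*}B=\beta I+K'+F'$ with $\beta\ge 0$, $K'\in\mathcal{K}(H)_{+}$, and $F'=F'^{*}\in\mathcal{F}(H)$. Substituting, we compute
\begin{equation*}
(AB)^{*}(AB)=B^{*}A^{*}AB=\alpha B^{*}B+B^{*}KB+B^{*}FB=\alpha\beta I+(\alpha K'+B^{*}KB)+(\alpha F'+B^{*}FB).
\end{equation*}

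Next I would check that each grouped term lands in the right class. The coefficient $\alpha\beta$ is non-negative. For the compact part, $B^{*}KB$ is compact (since $K$ is), and it is positive because $\langle B^{*}KBx,x\rangle=\langle K(Bx),Bx\rangle\ge 0$; combined with $\alpha K'\in\mathcal{K}(H)_{+}$ this gives $\tilde{K}:=\alpha K'+B^{*}KB\in\mathcal{K}(H)_{+}$. For the finite rank part, $B^{*}FB$ is finite rank (since $F$ is) and self-adjoint because $(B^{*}FB)^{*}=B^{*}F^{*}B=B^{*}FB$, so $\tilde{F}:=\alpha F'+B^{*}FB$ is self-adjoint and of finite rank. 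Applying Theorem \ref{PandeypositiveAN} in the reverse direction gives $(AB)^{*}(AB)\in\mathcal{AN}(H)_{+}$, and one more invocation of Theorem \ref{T*T AN} yields $AB\in\mathcal{AN}(H)$.

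The only step that requires any thought is the observation that conjugation $X\mapsto B^{*}XB$ preserves the three building blocks of the $\mathcal{AN}$-decomposition of a positive operator, namely it sends scalars multiples of $I$ to $\alpha B^{*}B$ (which is itself $\mathcal{AN}$ and thus decomposable), compact positives to compact positives, and self-adjoint finite rank operators to self-adjoint finite rank operators. I do not anticipate a genuine obstacle here, as the verification is routine once the right reduction is set up; the whole argument is essentially algebraic once Theorems \ref{PandeypositiveAN} and \ref{T*T AN} are invoked.
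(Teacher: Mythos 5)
Your argument is correct and complete: reducing to the positive operator $(AB)^{*}(AB)=B^{*}(A^{*}A)B$ via Theorem \ref{T*T AN}, substituting the Pandey--Paulsen decompositions of $A^{*}A$ and $B^{*}B$, and checking that $\alpha K'+B^{*}KB\in\mathcal{K}(H)_{+}$ and $\alpha F'+B^{*}FB$ is self-adjoint finite rank is exactly the standard route. The paper itself only cites this lemma (as Theorem 2.2 of \cite{RameshOsaka1}) without reproducing a proof, but your argument is the same technique the paper uses for Proposition \ref{AN+Finiterank} and matches the cited source's approach, so nothing further is needed.
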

\begin{theorem}\label{productinclosure}
	If $A, B \in \overline{\mathcal{AN}(H)}$ then, $AB \in \overline{\mathcal{AN}(H)}$.
\end{theorem}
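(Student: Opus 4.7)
The plan is to reduce the problem to the partial-isometry-plus-compact representation provided by Theorem \ref{general AN-closure structure} and then exploit that $\mathcal{AN}(H)$ is closed under products (Lemma \ref{productinAN}) and absorbs compact perturbations into its closure (Proposition \ref{AN+compact}).

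First I would apply Theorem \ref{general AN-closure structure} to both $A$ and $B$, writing $A = \alpha W_1 + K_1$ and $B = \beta W_2 + K_2$, where $\alpha,\beta \geq 0$, the $K_i$ are compact, and $W_1,W_2$ are the partial isometries from the polar decompositions of $A$ and $B$ respectively. Expanding the product gives
\begin{equation*}
AB \;=\; \alpha\beta\, W_1 W_2 \;+\; \bigl( \alpha W_1 K_2 + \beta K_1 W_2 + K_1 K_2 \bigr),
\end{equation*}
where the bracketed term is compact because $\mathcal{K}(H)$ is a two-sided ideal in $\mathcal{B}(H)$.

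Next I would split into cases. If either $A$ or $B$ is compact, then $AB$ itself is compact, and since $\mathcal{K}(H)\subseteq \mathcal{AN}(H)\subseteq \overline{\mathcal{AN}(H)}$ we are done. If neither $A$ nor $B$ is compact, then Corollary \ref{properties2}(\ref{fdmlkernel}) gives $W_1, W_2 \in \mathcal{AN}(H)$. Lemma \ref{productinAN} then yields $W_1 W_2 \in \mathcal{AN}(H)$, and since $\mathcal{AN}(H)$ is stable under multiplication by nonnegative scalars, $\alpha\beta\, W_1 W_2 \in \mathcal{AN}(H)$. Finally, Proposition \ref{AN+compact} applied to $\alpha\beta\, W_1 W_2$ with the compact perturbation above places $AB$ in $\overline{\mathcal{AN}(H)}$, finishing the proof.

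There is no real obstacle beyond organising the cases; the one small point to be careful about is that Theorem \ref{general AN-closure structure} does not by itself assert $W_i \in \mathcal{AN}$, so it is essential to invoke Corollary \ref{properties2}(\ref{fdmlkernel}), which requires the assumption that the corresponding operator is non-compact, hence the case split above.
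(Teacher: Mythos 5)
Your proof is correct, but it takes a genuinely different route from the paper's. The paper's argument is a two-line approximation: choose sequences $A_n \to A$ and $B_n \to B$ with $A_n, B_n \in \mathcal{AN}(H)$, note that $A_nB_n \in \mathcal{AN}(H)$ by Lemma \ref{productinAN}, and observe that $A_nB_n \to AB$ in operator norm (using boundedness of $\{\|A_n\|\}$), so $AB \in \overline{\mathcal{AN}(H)}$ directly. You instead pass through the structure theorem, writing each factor as $\alpha W + K$ and reassembling; your case split and the appeal to Corollary \ref{properties2}(\ref{fdmlkernel}) to get $W_1, W_2 \in \mathcal{AN}(H)$ are exactly right and nothing is circular, since Theorem \ref{general AN-closure structure} and Corollary \ref{properties2} precede this result. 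What the paper's approach buys is brevity and the fact that it uses only the continuity of multiplication plus closure of $\mathcal{AN}(H)$ under products; what yours buys is an explicit representation $AB = \alpha\beta\, W_1W_2 + K'$ with $W_1W_2 \in \mathcal{AN}(H)$ and $K'$ compact, which is slightly more informative about the product's structure. One cosmetic remark: your non-compact case already covers the compact one (if, say, $\alpha = 0$ then $\alpha\beta\,W_1W_2 = 0 \in \mathcal{AN}(H)$ and Proposition \ref{AN+compact} still applies), but the case split as you organised it is harmless and arguably clearer.
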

\begin{proof}
	Since $A, B \in \overline{\mathcal{AN}(H)}$, there exist sequences  $\{A_n\}$ and $\{B_n\}$ of $\mathcal{AN}$- operators such the $A_n \to A$ and $B_n \to B$ as $n \to \infty$. By Lemma \ref{productinAN}, we have $A_nB_n \in \mathcal{AN}(H)$. Also $A_nB_n \to AB$ as $n \to \infty$. Thus $AB \in \overline{\mathcal{AN}(H)}$.
\end{proof}
\begin{corollary}
  If $T\in \overline{\mathcal{AN}(H)}$, then $T^n\in \overline{\mathcal{AN}(H)}$ for each $n\in \mathbb N$. In particular, if $T$ is positive and $p$ is real polynomial with positive co-efficients, the $p(T)\in \overline{\mathcal{AN}(H)}_{+}$.
\end{corollary}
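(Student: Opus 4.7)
For the first assertion, I would proceed by induction on $n$ using Theorem \ref{productinclosure}. The base case $n=1$ is the hypothesis. For the inductive step, write $T^{n+1} = T \cdot T^{n}$; both factors lie in $\overline{\mathcal{AN}(H)}$ by hypothesis and induction, so Theorem \ref{productinclosure} places the product in the same set.

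For the second assertion, my plan is to reduce $p(T)$ to a compact perturbation of a non-negative multiple of the identity and then invoke Corollary \ref{partialisometryAN+compact} with the partial isometry $W = I$. Since $T$ is positive we have $T = |T|$, and the intermediate step in the proof of Theorem \ref{general AN-closure structure} (which establishes $|T| = \alpha I + K_{1}$ for some $\alpha \geq 0$ and some self-adjoint $K_{1} \in \mathcal{K}(H)$) then gives $T = \alpha I + K_{1}$. Expanding $T^{k} = (\alpha I + K_{1})^{k}$ by the binomial theorem produces $\alpha^{k} I + K_{k}'$ for some $K_{k}' \in \mathcal{K}(H)$, and aggregating against the non-negative coefficients of $p = \sum_{k} a_{k} x^{k}$ yields
\[
p(T) = p(\alpha)\,I + K', \qquad K' = \sum_{k} a_{k} K_{k}' \in \mathcal{K}(H).
\]
Because $\alpha \geq 0$ and each $a_{k} \geq 0$, we have $p(\alpha) \geq 0$. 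The identity operator is a partial isometry that trivially belongs to $\mathcal{AN}(H)$, so Corollary \ref{partialisometryAN+compact} places $p(T) \in \overline{\mathcal{AN}(H)}$. Positivity of $p(T)$ is immediate from $T \geq 0$ and $a_{k} \geq 0$, giving $p(T) \in \overline{\mathcal{AN}(H)}_{+}$.

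The only delicate point is extracting the clean representation $T = \alpha I + K_{1}$ for a positive $T$ in the closure: this is not isolated as a separate proposition but is precisely the intermediate identity buried in the proof of Theorem \ref{general AN-closure structure}, used together with the fact that $|T| = T$ when $T$ is positive. Once that representation is in hand, the rest is routine polynomial algebra followed by a single application of Corollary \ref{partialisometryAN+compact}; note in particular that one never needs to know that sums of elements in $\overline{\mathcal{AN}(H)}$ remain in $\overline{\mathcal{AN}(H)}$, a statement I would not attempt to prove in general.
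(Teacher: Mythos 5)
Your proof is correct. The paper states this corollary without proof, so there is nothing to compare against line by line, but your argument supplies exactly what is needed: the first claim is the evident induction on Theorem \ref{productinclosure}, and for the second claim you correctly recognize the real obstacle, namely that $\overline{\mathcal{AN}(H)}$ is not closed under addition (the paper exhibits a counterexample in the next section), so one cannot simply sum the terms $a_kT^k$. Your workaround --- extracting $T=|T|=\alpha I+K_1$ with $K_1\in\mathcal{K}(H)$ self-adjoint, pushing it through the binomial expansion to get $p(T)=p(\alpha)I+K'$, and invoking Corollary \ref{partialisometryAN+compact} (or equally Proposition \ref{AN+compact}) --- is sound. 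Two small remarks on provenance and alternatives: the identity you describe as ``buried'' in the proof of Theorem \ref{general AN-closure structure} is in fact recorded explicitly as Equation \eqref{propeq1} in Corollary \ref{properties2}, and is later isolated for positive operators as Theorem \ref{T positive} ($T=\alpha I-K_1+K_2$), so a cleaner citation is available; and an alternative proof of the second claim runs through Theorem \ref{T ess} together with the spectral mapping theorem for the essential spectrum ($\sigma_{ess}(p(T))=p(\sigma_{ess}(T))=\{p(\alpha)\}$), or through approximating $T$ by positive $\mathcal{AN}$-operators $T_n$ and checking via Theorem \ref{PandeypositiveAN} that $p(T_n)\in\mathcal{AN}(H)_+$ --- though the former uses machinery the paper only develops in the following section, so your route is the one consistent with the corollary's placement in the text.
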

\begin{lemma}\label{|T|AN-closure}
	Let $T \in \mathcal{B}(H_1,H_2)$.  Then  $T \in \overline{\mathcal{AN}(H_1,H_2)}$ if and only if $|T| \in \overline{\mathcal{AN}(H_1)}$.
\end{lemma}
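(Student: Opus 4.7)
The plan is to use Theorem \ref{T*T AN} together with the structural characterization provided by Theorem \ref{general AN-closure structure} and Corollary \ref{partialisometryAN+compact}.

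For the forward direction, I would take a sequence $\{T_n\} \subset \mathcal{AN}(H_1,H_2)$ with $T_n \to T$. The map $A \mapsto |A|$ is norm-continuous (the Reed--Simon fact already used in the proof of Theorem \ref{general AN-closure structure}), so $|T_n| \to |T|$. The crux is to see that $T_n \in \mathcal{AN}(H_1,H_2)$ forces $|T_n| \in \mathcal{AN}(H_1)$: by Theorem \ref{T*T AN} applied to $T_n$ we obtain $T_n^*T_n = |T_n|^2 \in \mathcal{AN}(H_1)$, and applying Theorem \ref{T*T AN} a second time to the positive operator $|T_n|$ (whose modulus squared is again $|T_n|^2$) yields $|T_n| \in \mathcal{AN}(H_1)$. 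Taking limits gives $|T| \in \overline{\mathcal{AN}(H_1)}$.

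For the reverse direction, suppose $|T| \in \overline{\mathcal{AN}(H_1)}$ and let $T = W|T|$ be the polar decomposition of $T$. Rerunning the argument inside the proof of Theorem \ref{general AN-closure structure} for the positive operator $|T|$ (which is the natural thing to quote here) produces the sharp form $|T| = \alpha I + K_1$ with $\alpha \geq 0$ and self-adjoint $K_1 \in \mathcal{K}(H_1)$. I would then split on $\alpha$. If $\alpha = 0$, then $|T| = K_1$ is compact, so $T = WK_1$ is compact and hence in $\mathcal{AN}(H_1,H_2) \subseteq \overline{\mathcal{AN}(H_1,H_2)}$. If $\alpha > 0$, then any $x \in N(|T|)$ satisfies $K_1 x = -\alpha x$, so $N(|T|)$ sits inside the $(-\alpha)$-eigenspace of the compact operator $K_1$ and is therefore finite-dimensional. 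Since $N(W) = N(|T|)$, the partial isometry $W$ has finite-dimensional kernel and hence belongs to $\mathcal{AN}(H_1,H_2)$. Writing $T = W|T| = \alpha W + WK_1$ with $WK_1 \in \mathcal{K}(H_1,H_2)$, Corollary \ref{partialisometryAN+compact} finishes the argument.

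The main obstacle is the reverse direction: I need the representation $|T| = \alpha I + K_1$ with the identity (not $|T| = \alpha V + K$ with $V$ a projection, which is what one gets by mechanically invoking the statement of Theorem \ref{general AN-closure structure} on the positive operator $|T|$). The cleanest way is to appeal to the argument inside that proof. Once this sharper form is available, the case split and the application of Corollary \ref{partialisometryAN+compact} are essentially routine.
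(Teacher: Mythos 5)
Your proof is correct and follows essentially the same route as the paper: the forward direction is the same sequence-plus-continuity-of-modulus argument, and the reverse direction likewise extracts $|T|=\alpha I+K_1$, deduces that $N(|T|)=N(W)$ is finite dimensional when $\alpha>0$, and recombines via the polar decomposition. The only cosmetic difference is the last step, where you invoke Corollary \ref{partialisometryAN+compact} on $T=\alpha W+WK_1$ while the paper applies Theorem \ref{productinclosure} to $T=W|T|$; both are available at that point and your care about needing the sharper form $\alpha I+K_1$ (rather than $\alpha P_{N(|T|)^{\perp}}+K$) is well placed.
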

\begin{proof}
	Let $T = W|T|$ and $|T| = P_{N(|T|)^\perp}|T|$ be the polar decompositions of $T$ and $|T|$ respectively.  First assume that $|T|\in \overline{\mathcal{AN}(H_1)} $.

Then by Theorem \ref{general AN-closure structure}, we have $|T| = \alpha P_{N(|T|)^\perp} + K$ for some $K \in \mathcal{K}(H_1)$, $\alpha \geq 0$. If $\alpha=0$, then $|T|\in \mathcal{K}(H_1)$ and hence $T\in \mathcal{K}(H_1,H_2) \subset \overline{\mathcal{AN}(H_1, H_2)}$.

Next assume that $\alpha>0$. Then  $N(|T|)$ is finite dimensional. Since $N(|T|) = N(W)$, we get $W \in \mathcal{AN}(H_1,H_2)$ \cite[Proposition 3.14]{CarvjalNeves}. Hence by using Theorem \ref{productinclosure}, we have $T= W|T| \in \overline{\mathcal{AN}(H_1, H_2)}$.

	Conversely, if $T \in \overline{\mathcal{AN}(H_1,H_2)}$, then there exists a sequence ${\{T_n}\}$ of $\mathcal{AN}$-operators which converges to $T$ in the operator norm. Then $|T_n|$ converges to $|T|$. Since $|T_n| \in \mathcal{AN}(H_1)$ we get, $|T|\in \overline{\mathcal{AN}(H_1)}$.
\end{proof}

The following result generalizes that of \cite[Corollary 2.11]{venkuramesh}.
\begin{theorem} \label{T*T in AN-closure}
	Let $T \in \mathcal{B}(H_1,H_2)$. Then $T \in \overline{\mathcal{AN}(H_1,H_2)}$ if and only $T^*T \in \overline{\mathcal{AN}(H_1)}$.
\end{theorem}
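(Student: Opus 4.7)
The plan is to prove the two directions separately. The forward direction is straightforward: if $\{T_n\} \subset \mathcal{AN}(H_1, H_2)$ converges to $T$ in operator norm, then Theorem \ref{T*T AN} gives $T_n^{*} T_n \in \mathcal{AN}(H_1)$ for every $n$, and continuity of the map $S \mapsto S^{*}S$ on bounded sets yields $T_n^{*}T_n \to T^{*}T$, whence $T^{*}T \in \overline{\mathcal{AN}(H_1)}$.

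For the converse, assume $T^{*}T \in \overline{\mathcal{AN}(H_1)}$. By Lemma \ref{|T|AN-closure}, it suffices to show $|T| \in \overline{\mathcal{AN}(H_1)}$. Since $T^{*}T$ is positive, its polar decomposition is $T^{*}T = P_{N(T)^\perp}\cdot T^{*}T$, so Theorem \ref{general AN-closure structure} gives
$$T^{*}T = \alpha\, P_{N(T)^\perp} + K$$
for some $\alpha \geq 0$ and $K = K^{*} \in \mathcal{K}(H_1)$. If $T^{*}T$ is compact, then so is $|T| = (T^{*}T)^{1/2}$ (continuous functional calculus), whence $T \in \mathcal{K}(H_1, H_2) \subset \overline{\mathcal{AN}(H_1, H_2)}$. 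Otherwise, Corollary \ref{properties2} gives that $N(T^{*}T) = N(T)$ is finite-dimensional (so $P_{N(T)^\perp} \in \mathcal{AN}(H_1)$), and necessarily $\alpha > 0$ (else $T^{*}T = K$ would be compact).

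The key step is to compute the square root in the form $|T| = \sqrt{\alpha}\, P_{N(T)^\perp} + K'$ with $K' \in \mathcal{K}(H_1)$. Since $N(T)^\perp$ is invariant under $T^{*}T$ and $K$, this reduces to showing that on $N(T)^\perp$,
$$(\alpha I + K_0)^{1/2} = \sqrt{\alpha}\, I + C, \qquad C \in \mathcal{K}(N(T)^\perp),$$
where $K_0 = K|_{N(T)^\perp}$. The factorization
$$\bigl((\alpha I + K_0)^{1/2} - \sqrt{\alpha}\, I\bigr)\bigl((\alpha I + K_0)^{1/2} + \sqrt{\alpha}\, I\bigr) = K_0,$$
combined with the invertibility of $(\alpha I + K_0)^{1/2} + \sqrt{\alpha}\, I$ (its spectrum lies in $[\sqrt{\alpha}, \infty)$, using $\alpha > 0$), yields $C = \bigl((\alpha I + K_0)^{1/2} + \sqrt{\alpha}\, I\bigr)^{-1} K_0 \in \mathcal{K}(N(T)^\perp)$.

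Since $|T|$ vanishes on $N(T)$, this gives the desired global representation on $H_1$. Corollary \ref{partialisometryAN+compact} then yields $|T| \in \overline{\mathcal{AN}(H_1)}$, and Lemma \ref{|T|AN-closure} completes the proof. The main obstacle is the square-root step: everything else is a formal assembly of earlier results, but one must verify that perturbing $\sqrt{\alpha}\,I$ by the square root of a compact self-adjoint piece remains a compact perturbation, and it is precisely for this that the positivity $\alpha > 0$ (hence the case split against the compact case) is essential.
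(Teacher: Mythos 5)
Your proof is correct, but it reaches the conclusion by a genuinely different route in both directions. For the forward implication the paper argues structurally: it writes $T=\alpha W+K$ via Theorem \ref{general AN-closure structure} and expands $T^*T=\alpha^2P_{N(W)^\perp}+K_1$ with $K_1$ compact, then invokes Corollary \ref{partialisometryAN+compact}; you instead use the bare sequence definition together with Theorem \ref{T*T AN} and the norm continuity of $S\mapsto S^*S$ on bounded sets, which is shorter and avoids any case split. For the converse the roles are reversed: the paper takes positive $\mathcal{AN}$-operators $S_n\to|T|^2$ and quotes the operator-norm continuity of the square root (Reed--Simon or Kaufman) together with the fact that square roots of positive $\mathcal{AN}$-operators are again $\mathcal{AN}$ \cite[Corollary 2.10]{venkuramesh}, whereas you apply the structure theorem to $T^*T$ itself and compute the square root explicitly, showing $(\alpha I+K_0)^{1/2}=\sqrt{\alpha}\,I+C$ with $C$ compact via the factorization $\bigl(A-\sqrt{\alpha}\,I\bigr)\bigl(A+\sqrt{\alpha}\,I\bigr)=K_0$ for $A=(\alpha I+K_0)^{1/2}$ and the invertibility of $A+\sqrt{\alpha}\,I$ when $\alpha>0$. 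Your route trades the two external citations for an elementary algebraic identity, at the cost of the reduction to $N(T)^\perp$ and the explicit case split at $\alpha=0$; both steps are legitimate, since $N(T)=N(T^*T)$ reduces $T^*T$ and $P_{N(T)^\perp}$ (hence $K$), and Corollary \ref{properties2} applied to $T^*T$ gives $\dim N(T)<\infty$, so $P_{N(T)^\perp}\in\mathcal{AN}(H_1)$ and Corollary \ref{partialisometryAN+compact} applies. Both arguments are sound; yours is somewhat more self-contained.
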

\begin{proof}
		If $T = W|T|$ be the polar decomposition of $T$, then by Theorem \ref{general AN-closure structure}, $T = \alpha W +K$ for some $\alpha \geq 0$ and $K \in \mathcal{K}(H_1,H_2)$. Note that if $\alpha=0$, then $T$ is compact and hence $T^*T$ is compact. Hence $T^*T\in \overline{\mathcal{AN}(H_1)}$.

Next assume that $\alpha>0$. Then by Corollary \ref{properties2}, $N(T)$ is finite dimensional. We have $T^*T = \alpha^2 W^*W + K_1$, where $K_1 = \alpha W^*K + \alpha K^*W + K^*K \in \mathcal{K}(H_1)$. Hence $T^*T = \alpha ^2 P_{{N(W)}^ \perp}+ K_1$.  Since, $N(W)$ is finite dimensional, $P_{{N(W)}^ \perp} \in \mathcal{AN}(H_1)$ by using \cite[Theorem 3.9]{CarvjalNeves}. Hence $T^*T \in \overline{\mathcal{AN}(H_1)}$.

	Conversely, if $T^*T = |T|^2 \in \overline{\mathcal{AN}(H_1)}$, then there exists a sequence $\{S_n\}$  of positive $\mathcal{AN}$-operators  such that $S_n \to |T|^2$ as $n \to \infty$. By \cite[Problem 14.(a), Page 217]{ReedSimon1} or \cite[Lemma 1]{kaufman}, we have $S^{1/2}_n$ converges to $|T|$. Thus $|T| \in \overline{\mathcal{AN}(H_1)}$, by \cite[Corollary 2.10]{venkuramesh}. Hence by Lemma \ref{|T|AN-closure}, $T \in \overline{\mathcal{AN}(H_1,H_2)}$.
\end{proof}
\section{Positive Operators in $\overline{\mathcal{AN}(H)}$}
By \cite[Theorem 2.5]{venkuramesh}, it is known that if $H$ is an infinite dimensional Hilbert space and $T\in \mathcal B(H)$, then $T\in \mathcal{AN}(H)_{+}$  if and only if there exists a unique triple $(\alpha, K, F)$ where $K\in \mathcal{K}(H)_{+},F\in \mathcal{F}(H)_{+}$ and $\alpha \in [0,\infty)$ such that $T=\alpha I+K-F$ with $KF = 0$, $F \leq \alpha I$. Here we express the same result in a different way.
\begin{theorem}
	Let $H$ be an infinite dimensional Hilbert space and $T \in \mathcal{B}(H)$. Then the following are equivalent.
	\begin{enumerate}
		\item $ T \in \mathcal{AN}(H)_{+}$.
		\item There exists $\alpha \geq 0$, such that $(T-\alpha I)^+ \in \mathcal{K}(H)_+$ and $(T - \alpha I)^- \in \mathcal{F}(H)_+$ and $\|(T-\alpha I)^{-}\|\leq \alpha$.
	\end{enumerate}
\end{theorem}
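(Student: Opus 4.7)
The strategy is to translate the cited characterization from \cite{venkuramesh}: $T\in \mathcal{AN}(H)_+$ iff $T = \alpha I + K - F$ with $K\in \mathcal{K}(H)_+$, $F\in \mathcal{F}(H)_+$, $KF=0$ and $F\leq \alpha I$. The new formulation asserts the same content but phrased through the Jordan decomposition of $T-\alpha I$ (in the sense of Definition \ref{self-adjointdecomposition}). So the whole proof boils down to showing that the pair $(K,F)$ from the old statement coincides with $\bigl((T-\alpha I)^+,(T-\alpha I)^-\bigr)$ from the new one, together with the trivial reformulation $F\leq \alpha I \Longleftrightarrow \|F\|\leq \alpha$ (valid because $F\geq 0$).

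The key intermediate fact, which I would prove first as a small lemma, is: if $K,F\in \mathcal{B}(H)_+$ satisfy $KF=0$, then $(K-F)^+ = K$ and $(K-F)^- = F$. The proof is a short computation: since $K,F$ are self-adjoint, $FK = (KF)^* = 0$, so
\begin{equation*}
(K-F)^2 = K^2 - KF - FK + F^2 = K^2 + F^2 = (K+F)^2.
\end{equation*}
Both $|K-F|$ and $K+F$ are positive operators whose square equals $(K-F)^2$, so uniqueness of the positive square root gives $|K-F| = K+F$. Then by Definition \ref{self-adjointdecomposition},
\begin{equation*}
(K-F)^+ = \tfrac{|K-F|+(K-F)}{2} = K, \qquad (K-F)^- = \tfrac{|K-F|-(K-F)}{2} = F.
\end{equation*}

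With this lemma in hand, the two implications are immediate. For (1)$\Rightarrow$(2), apply \cite[Theorem 2.5]{venkuramesh} to obtain $\alpha\geq 0$, $K\in \mathcal{K}(H)_+$ and $F\in \mathcal{F}(H)_+$ with $KF=0$, $F\leq \alpha I$ and $T-\alpha I = K-F$; the lemma identifies $(T-\alpha I)^+ = K\in \mathcal{K}(H)_+$ and $(T-\alpha I)^- = F\in \mathcal{F}(H)_+$, and $F\leq \alpha I$ together with $F\geq 0$ is equivalent to $\|(T-\alpha I)^-\|=\|F\|\leq \alpha$. For (2)$\Rightarrow$(1), set $K := (T-\alpha I)^+$ and $F := (T-\alpha I)^-$. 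Then $T = \alpha I + K - F$, the hypotheses provide $K\in \mathcal{K}(H)_+$ and $F\in \mathcal{F}(H)_+$, the relation $KF=0$ is built into the decomposition of Definition \ref{self-adjointdecomposition}, and $\|F\|\leq \alpha$ gives $F\leq \alpha I$. The cited theorem then yields $T\in \mathcal{AN}(H)_+$.

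The only non-routine step is the lemma identifying $(K-F)^\pm$; the rest is bookkeeping. I do not anticipate any real obstacle beyond keeping track of the equivalence between the operator inequality $F\leq \alpha I$ and the norm inequality $\|F\|\leq \alpha$ for a positive operator, and noting that in the degenerate case $\alpha = 0$ both conditions force $F = 0$, in which case $T = K$ is compact and positive, hence $T\in \mathcal{AN}(H)_+$ directly.
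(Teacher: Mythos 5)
Your proposal is correct and follows essentially the same route as the paper: both directions rest on the computation $(K-F)^2=(K+F)^2$ when $KF=0$, uniqueness of the positive square root giving $|K-F|=K+F$, and hence the identification $(T-\alpha I)^{+}=K$, $(T-\alpha I)^{-}=F$, combined with the equivalence $F\leq\alpha I \Leftrightarrow \|F\|\leq\alpha$ for positive $F$. Isolating that identification as a standalone lemma is a tidier presentation than the paper's inline computation, but the mathematics is identical.
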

\begin{proof}
	(1) $\implies$ (2). Let $T \in \mathcal{AN}(H)_+$. Then by \cite[Theorem 2.5]{venkuramesh} there exists a unique triple $(\alpha, K, F)$ such that $T = \alpha I + K- F$ \ where, $K \in \mathcal{K}(H)_+, F \in \mathcal{F}(H)_+, \alpha \geq 0, KF = 0$ and $F \leq \alpha I$. Clearly $T- \alpha I$ is self-adjoint. Define $T_\alpha := T - \alpha I$. Then we have,
	\begin{equation*}
	|T_\alpha|^2 = T^*_\alpha T_\alpha = T^2_\alpha = (K-F)^2 = K^2 + F^2=(K+F)^2,
	\end{equation*}
	as $KF = 0$.  Hence $|T_\alpha|^2 = (K+F)^2$. Since $K + F$ is positive, we have $|K+F| = K+F$. Hence by the uniqueness of the square root, we get $ |T_\alpha| = K + F$. Now it is easy to see that $ T^+_\alpha \in \mathcal{K}(H)_+$ and $T^-_\alpha \in \mathcal{F}(H)_+$ by using Definition \ref{self-adjointdecomposition}. Also, $\|(T-\alpha I)^{-}\|\leq \alpha$ by the representation of $T$.
	
	(2) $\implies$ (1): Assume that there exists $\alpha \geq 0$ such that $(T-\alpha I)^+ =: K \in \mathcal{K}(H)_+$, $(T-\alpha I)^{-} =: F \in \mathcal{F}(H)_{+}$ and $\|(T-\alpha I)^{-}\|\leq \alpha$. Then by using definition \ref{self-adjointdecomposition}, $T - \alpha I = K - F $. Thus $T = \alpha I - F + K \in \mathcal{AN}(H)$ by using Theorem \ref{PandeypositiveAN}. It is clear by definitions that $KF=0$ and $\|F\|\leq \alpha$.
\end{proof}

Now we give a new structure theorem for positive operators in $\overline{\mathcal{AN}(H)}$.

\begin{theorem}\label{T positive}
	Let H be an infinite dimensional Hilbert space. Then the following are equivalent.\begin{enumerate}
		\item $T \in \overline{\mathcal{AN}(H)}_{+}$.
		\item There exists $\alpha \geq 0, K_1,K_2 \in \mathcal{K}(H)_+$ such that
		\begin{equation}\label{repneq}
		T = \alpha I - K_1 + K_2\, \text{with} \, K_1 \leq \alpha I\; \text{ and}\; K_1 K_2 = 0.
		\end{equation}
		Moreover, $(T-\alpha I)^{+}=K_2$ and $(T-\alpha I)^{-}=K_1$, hence the representation of $T$ is unique.
	\end{enumerate}
\end{theorem}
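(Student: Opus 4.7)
The plan is to establish the two implications separately and then read off uniqueness from the Jordan decomposition plus the non-compactness of the identity in infinite dimensions.

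For the direction $(2) \Rightarrow (1)$, given the representation $T = \alpha I - K_1 + K_2$, I would rewrite it as $T = \alpha I + (K_2 - K_1)$. Here $K_2 - K_1 \in \mathcal{K}(H)$, and $\alpha I \in \mathcal{AN}(H)$ (the identity attains its norm on any closed subspace, and scalar multiples inherit this), so Proposition \ref{AN+compact} gives $T \in \overline{\mathcal{AN}(H)}$. Positivity is automatic: $\alpha I - K_1 \geq 0$ by hypothesis and $K_2 \geq 0$, so their sum $T$ is positive. Hence $T \in \overline{\mathcal{AN}(H)}_{+}$.

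For the direction $(1) \Rightarrow (2)$, since $T \geq 0$ we have $|T| = T$, and the proof of Theorem \ref{general AN-closure structure} actually produces the stronger statement $|T| = \alpha I + K_0$ for some $\alpha \geq 0$ and some $K_0 \in \mathcal{K}(H)$; since $T$ and $\alpha I$ are self-adjoint, so is $K_0$. I would then apply the Jordan decomposition (Definition \ref{self-adjointdecomposition}) to the compact self-adjoint operator $K_0$, setting $K_2 := K_0^{+}$ and $K_1 := K_0^{-}$. Both lie in $\mathcal{K}(H)_{+}$, satisfy $K_1 K_2 = 0$, and yield $T = \alpha I - K_1 + K_2$. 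The remaining inequality $K_1 \leq \alpha I$ is where $T \geq 0$ must be used: because $K_1 K_2 = 0$, the closed range of $K_1$ is contained in $N(K_2)$, so for $x \in \overline{R(K_1)}$ one computes $0 \leq \langle Tx,x\rangle = \alpha\|x\|^2 - \langle K_1 x, x\rangle$, giving $\langle K_1 x, x\rangle \leq \alpha \|x\|^2$. Since $K_1$ vanishes on the orthogonal complement of its range, this inequality extends to all of $H$, so $K_1 \leq \alpha I$.

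For the uniqueness and the identification of $K_1, K_2$ as the negative and positive parts of $T - \alpha I$, suppose $T = \alpha I - K_1 + K_2 = \alpha' I - K_1' + K_2'$ are two representations of the required form. Subtracting, $(\alpha - \alpha') I = (K_2 - K_2') - (K_1 - K_1')$, whose right-hand side is compact. Since $H$ is infinite dimensional, $I \notin \mathcal{K}(H)$, which forces $\alpha = \alpha'$. Then $K_2 - K_1 = K_2' - K_1'$, and because $K_1, K_2 \geq 0$ with $K_1 K_2 = 0$ (and similarly for the primed pair), both expressions are Jordan decompositions of the self-adjoint operator $T - \alpha I$. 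By the uniqueness of the Jordan decomposition, $K_1 = K_1' = (T-\alpha I)^{-}$ and $K_2 = K_2' = (T-\alpha I)^{+}$.

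The main obstacle I expect is the inequality $K_1 \leq \alpha I$: this is where the hypotheses $K_1 K_2 = 0$ and $T \geq 0$ must be combined carefully. An alternative route via continuous functional calculus on $K_0$ (where the negative spectral subspaces of $K_0$ correspond to eigenvalues $-\lambda$ with $\lambda > 0$, on which $T$ acts as multiplication by $\alpha - \lambda \geq 0$) would serve just as well. Everything else is bookkeeping built on Theorem \ref{general AN-closure structure}, Proposition \ref{AN+compact}, and the Jordan decomposition.
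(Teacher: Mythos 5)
Your proof is correct, but it follows a genuinely different route from the paper's in both directions. For $(1)\Rightarrow(2)$ the paper goes back to first principles: it takes an approximating sequence $T_n=\alpha_nI-F_n+K_n$ of positive $\mathcal{AN}$-operators (via the structure theorem of Venku Naidu--Ramesh), extracts a convergent subsequence of the $\alpha_n$, and then uses the identity $(K_n-F_n)^2=(K_n+F_n)^2$ to show that $\{K_n\}$ and $\{F_n\}$ converge separately, passing the relations $K_nF_n=0$ and $F_n\le\alpha_nI$ to the limit. You instead invoke the already-established fact (from the proof of Theorem \ref{general AN-closure structure}, also recorded as Equation \eqref{propeq1} in Corollary \ref{properties2}) that $|T|=T=\alpha I+K_0$ with $K_0$ compact self-adjoint, and then take the Jordan decomposition of $K_0$; your derivation of $K_1\le\alpha I$ from $T\ge0$ together with $\overline{R(K_1)}\subseteq N(K_2)$ is a correct and necessary supplement, since that inequality does not come for free from the decomposition of $K_0$. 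For $(2)\Rightarrow(1)$ the paper constructs finite-rank positive $F_n\le\alpha I$ converging to $K_1$ so that $\alpha I-F_n+K_2$ is an explicit sequence of positive $\mathcal{AN}$-operators, whereas you simply write $T=\alpha I+(K_2-K_1)$ and apply Proposition \ref{AN+compact}; both work, and yours is shorter. For uniqueness the paper appeals to Weyl's theorem on the essential spectrum to pin down $\alpha$, while you use the more elementary observation that $(\alpha-\alpha')I$ compact forces $\alpha=\alpha'$ in infinite dimensions; after that, both arguments reduce to the uniqueness of the orthogonal Jordan decomposition of $T-\alpha I$. Overall your argument leans more heavily on results already proved in the paper and is somewhat more economical, at the cost of depending on a fact extracted from the proof (rather than the statement) of Theorem \ref{general AN-closure structure}.
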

\begin{proof}
	(1) $\implies$ (2)
	Since $T \in \overline{\mathcal{AN}(H)}_+$, there exists $\{T_n\} \subset \mathcal{AN}(H)_{+}$ such that $T_n \to T$ in the operator norm. By using \cite[Theorem 2.5]{venkuramesh},  we have, $T_n = \alpha_n I - F_n + K_n$ where, $K_n \in \mathcal{K}(H)_+, F_n \in \mathcal{F}(H)_+ , \alpha_n \geq 0, K_n F_n = 0$ and $F_n \leq \alpha_n I$ for all $n \in \mathbb{N}$. Since $\{\alpha_n\}$ is bounded, it has a convergent subsequence say $\{\alpha_{n_k}\}$ which converges to $\alpha$. Now consider, $T_{n_k} = \alpha_{n_k}I - F_{n_k} + K_{n_k}$. Clearly, $T_{n_k} - \alpha_{n_k}I = K_{n_k} - F_{n_k}$ is convergent. As $K_{n_k} F_{n_k} = 0$ we have,
	\begin{equation*}
	(K_{n_k} - F_{n_k})^2 = K^2_{n_k} + F^2_{n_k} = (K_{n_k} + F_{n_k})^2,\ \forall \ k \in \mathbb{N}.
	\end{equation*}
	Hence $K_{n_k} + F_{n_k}$ is convergent. Therefore we can conclude that both $\{K_{n_k}\}$ and $\{F_{n_k}\}$ are convergent. Let $F_{n_k} \to K_1$ and $K_{n_k} \to K_2$ as $ k \to \infty$. Since $F_{n_k}K_{n_k} = 0$ for all $k \in \mathbb{N}$, we get $K_1 K_2 = 0$. Also note that as $F_{n_k} \leq \alpha_{n_k} I$, we have $\|F_{n_k}\| \leq \alpha_{n_k}$ for all $k \in \mathbb{N}$. Hence $\|K_1\| \leq \alpha$ which implies $K_1 \leq \alpha I$. Therefore $T = \alpha I -K_1 + K_2$. It is easy to show that  $(T-\alpha I)^{+}=K_2$ and $(T-\alpha I)^{-}=K_1$ by the corresponding definitions.

Next, we show that the representation of $T$ is unique. Let $T=\beta I-\tilde{K_1}+\tilde{K_2}$, where $\tilde{K_1},\tilde{K_2}\in \mathcal{K}(H)_{+}$ with $\tilde{K_1}\tilde{K_2}=0$ and $\tilde{K_1}\leq \beta I$. Then by the Weyl's theorem for essential spectrum, we can conclude $\sigma_{ess}(T)={\{\alpha}\}={\{\beta}\}$. By the uniqueness of $(T-\alpha I)^{+}$ and $(T-\alpha I)^{-}$, we can get the required conclusions.

(2) $\implies$ (1)	Let $\{F_n\} \subset \mathcal{F}(H)_+$ be such that $F_n \to K_1$ in the operator norm and $F_n \leq \alpha I$ for all $n \in \mathbb{N}$. Then $T_n := \alpha I - F_n + K_2 \in \mathcal{AN}(H)_+$. Clearly $T_n$  converges to $T$ in the operator norm. Hence $T \in \overline{\mathcal{AN}(H)}$. It is clear that since $K_1 \leq \alpha$, we get $T\geq 0$.
\end{proof}
\begin{remark}
 For $T \in \overline{\mathcal{AN}(H)}_+, T = \alpha I - K_1 + K_2$ where, $\alpha \geq 0, K_1, K_2 \in \mathcal{K}(H)_+$ and $\|K_1\| \leq \alpha$. Then $\alpha I - K_1 \in \mathcal{AM}(H)_+$ by \cite[Theorem 5.8]{GRS}. Hence a positive operator in $\overline{\mathcal{AN}(H)}$ is a compact perturbation of positive $\mathcal{AM}$-operator.
\end{remark}

We have that the positive $\mathcal{AN}$-operators are diagonalizable by using \cite[Corollary 3.2]{PandeyPaulsen}. Next we prove that this result is also true for positive operators in $\overline{\mathcal{AN}(H)}$.

\begin{corollary}\label{diagonalization}
Let $T\in \overline{\mathcal{AN}(H)}_{+}$. Then $T$ is diagonalizable.
\end{corollary}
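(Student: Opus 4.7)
The plan is to invoke Theorem \ref{T positive} to write $T = \alpha I - K_1 + K_2$ with $\alpha \geq 0$ and $K_1, K_2 \in \mathcal{K}(H)_+$, and then reduce the problem to the spectral theorem for compact self-adjoint operators.

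First I would observe that the operator $S := K_2 - K_1 = T - \alpha I$ is compact and self-adjoint, being the difference of two compact positive operators. By the classical spectral theorem for compact self-adjoint operators on a Hilbert space, there exists an orthonormal basis $\{e_n\}_{n \in \Lambda}$ of $H$ together with real scalars $\{\mu_n\}_{n \in \Lambda}$ such that $S e_n = \mu_n e_n$ for every $n$. (Here the eigenvectors with non-zero eigenvalues span $\overline{R(S)}$, and any orthonormal basis of $N(S)$ completes the system.)

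Next, since $T = \alpha I + S$, each $e_n$ is automatically an eigenvector for $T$ with eigenvalue $\alpha + \mu_n$; indeed,
\begin{equation*}
T e_n = \alpha e_n + S e_n = (\alpha + \mu_n) e_n.
\end{equation*}
Thus $\{e_n\}_{n \in \Lambda}$ is an orthonormal basis of $H$ consisting of eigenvectors of $T$, which is precisely the definition of diagonalizability.

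There is no real obstacle here once the representation from Theorem \ref{T positive} is in hand; the only thing to be careful about is that we use the representation with $K_1, K_2$ both compact (rather than the sharper $\mathcal{AN}$-representation involving a finite rank summand), so that $T - \alpha I$ is genuinely compact and the compact self-adjoint spectral theorem applies directly.
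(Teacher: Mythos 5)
Your proposal is correct and follows essentially the same route as the paper: both apply Theorem \ref{T positive}, observe that $T-\alpha I = K_2-K_1$ is compact and self-adjoint, diagonalize it via the spectral theorem (completing with an orthonormal basis of its kernel), and shift the eigenvalues by $\alpha$. No gaps.
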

\begin{proof}
	By  Theorem \ref{T positive}, we get $T = \alpha I - K_1 + K_2$ where, $K_1, K_2 \in \mathcal{K}(H)_+$. Let $K = K_2 - K_1$, then $K \in \mathcal{K}(H)$ is self-adjoint. Hence by \cite[Theorem 5.1, Page 178]{Gohberg}, there exists a system of orthonormal eigenvectors $\phi_1, \phi_2, \phi_3, \dots$ with the corresponding eigenvalues $\lambda_1, \lambda_2, \lambda_3,\dots, $ respectively such that,
	\begin{equation*}
	Kx = \sum_{n=1}^{\infty} \lambda_n \langle x , \phi_n \rangle \phi_n,\ \forall x \in H,
	\end{equation*}
	where $\{\lambda_n\}$ converges to 0, when ${\{\lambda_n}\}$ is an infinite sequence. Also $\overline{span}\{\phi_1, \phi_2,\phi_3, \dots,\} = N(K)^{\perp}$\cite[6.1(a), Page 180]{Gohberg}. Now let $\{\psi_\beta: \beta \in \Lambda\}$ be an orthanormal basis of $N(K)$. Then $\{\phi_n : n \in \mathbb{N}\} \cup \{\psi_\beta : \beta \in \Lambda\}$ forms an orthonormal basis of $H$. Thus we get,
	\begin{equation*}
	Tx = \sum_{n=1}^{\infty} (\alpha + \lambda_n)\langle x , \phi_n \rangle \phi_n + \sum_{\beta \in \Lambda}^{} \alpha \langle x , \psi_\beta \rangle \psi_\beta, \ \forall x \in H.
	\end{equation*}
	Hence $T$ is diagonalizable.
\end{proof}
In the next theorem, we discuss the characterization of positive operators in the $\overline{\mathcal{AN}(H)}$ in terms of the essential spectrum. This result can be viewed as a generalization of \cite[Theorem 2.4]{Rameshpara}.
\begin{theorem}\label{T ess}
	Let $H$ be an infinite dimensional Hilbert space and $T \in \mathcal{B}(H)_{+}$. Then $T \in \overline{\mathcal{AN}(H)}_{+}$  if and only if $\sigma_{ess}(T)$ is a singleton set.
\end{theorem}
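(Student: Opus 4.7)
The plan is to prove both directions by leveraging Theorem \ref{T positive}, which already characterizes $\overline{\mathcal{AN}(H)}_{+}$ as those positive operators expressible as $\alpha I-K_1+K_2$ with $K_1,K_2\in\mathcal{K}(H)_{+}$, $K_1K_2=0$, and $K_1\leq\alpha I$. So the target reduces to showing that $\sigma_{ess}(T)$ is a singleton iff $T$ admits such a compact perturbation representation.

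For the forward direction, suppose $T\in\overline{\mathcal{AN}(H)}_{+}$. By Theorem \ref{T positive}, $T=\alpha I+K$ where $K=K_{2}-K_{1}\in\mathcal K(H)$ is self-adjoint. Passing to the Calkin algebra, $\pi(T)=\alpha\,\pi(I)$, so $\sigma_{ess}(T)=\sigma(\pi(T))=\{\alpha\}$, a singleton.

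For the converse, assume $\sigma_{ess}(T)=\{\alpha\}$. Since $T\geq 0$ forces $\sigma(T)\subseteq[0,\infty)$, necessarily $\alpha\geq 0$. Set $S:=T-\alpha I$, which is self-adjoint with $\sigma_{ess}(S)=\{0\}$. The main step is to deduce that $S$ is compact. Using the spectral decomposition for self-adjoint operators and the fact (from the discussion preceding Theorem \ref{ess spectrum of self-adjoint}) that $\sigma(S)\setminus\sigma_{ess}(S)=\sigma_{d}(S)$ consists of isolated eigenvalues of finite multiplicity, we get that the nonzero spectrum of $S$ is a sequence $\{\mu_{n}\}$ of finite-multiplicity eigenvalues accumulating only at $0$; hence $S=\sum_{n}\mu_{n}P_{n}$ converges in norm with each $P_{n}$ a finite-rank projection, which exhibits $S$ as a norm limit of finite-rank operators, so $S\in\mathcal K(H)$.

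Once $S=T-\alpha I$ is known to be compact and self-adjoint, I take its Jordan decomposition $S=K_{2}-K_{1}$ with $K_{1},K_{2}\in\mathcal K(H)_{+}$ and $K_{1}K_{2}=0$ (both compact by functional calculus). The positivity $T\geq 0$ then forces $K_{1}\leq\alpha I$: indeed, on the spectral subspace where $K_{1}>0$ the operator $K_{2}$ vanishes (since $K_{1}K_{2}=0$), so $0\leq T=\alpha I-K_{1}$ there, while on the orthogonal complement $K_{1}=0\leq\alpha I$ trivially. Hence $T$ satisfies the hypotheses of Theorem \ref{T positive}(2), giving $T\in\overline{\mathcal{AN}(H)}_{+}$.

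The main obstacle is the compactness step for $S$: one needs to know that for a bounded self-adjoint operator $\sigma_{ess}(S)\subseteq\{0\}$ implies $S\in\mathcal K(H)$. This is where the two different but equivalent characterizations of $\sigma_{ess}$ for self-adjoint operators (via Theorem \ref{ess spectrum of self-adjoint} and via the Calkin algebra of Definition \ref{ess_spectrum}) must be combined with the spectral theorem, and care must be taken to rule out a contribution from the continuous spectrum of $S$ outside $\{0\}$ and from eigenvalues of infinite multiplicity outside $\{0\}$ — both of which would put a point other than $0$ into $\sigma_{ess}(S)$, contradicting the hypothesis.
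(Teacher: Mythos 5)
Your proof is correct, and the converse direction takes a genuinely different route from the paper. The forward implication is identical: apply Theorem \ref{T positive} and Weyl's theorem on compact perturbations to get $\sigma_{ess}(T)=\sigma_{ess}(\alpha I)=\{\alpha\}$. For the converse, the paper does not argue abstractly that $T-\alpha I$ is compact; instead it splits into cases according to whether $[m(T),m_e(T))$ and $(m_e(T),\|T\|]$ contain finitely or countably infinitely many spectral points, disposes of the ``finitely many below $m_e(T)$'' cases by citing the known criterion that such a positive operator is already in $\mathcal{AN}(H)$ (\cite[Theorem 2.4]{Rameshpara}), and in the remaining cases explicitly diagonalizes $T$ over its eigenspaces to exhibit $T=m_e(T)I+K$ with $K$ compact, concluding via Proposition \ref{AN+compact}. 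You instead prove in one stroke that a self-adjoint operator $S$ with $\sigma_{ess}(S)=\{0\}$ is compact (its nonzero spectrum consists of isolated finite-multiplicity eigenvalues accumulating only at $0$, so $S$ is a norm limit of finite-rank truncations), and then verify the hypotheses of Theorem \ref{T positive}(2) via the Jordan decomposition $S=K_2-K_1$; your check that $K_1\leq\alpha I$ on $\overline{R(K_1)}$ and trivially on its complement is sound, and there is no circularity since Theorem \ref{T positive} precedes this result and does not depend on it. Your version buys a shorter, case-free argument that lands directly on the canonical representation; the paper's version buys an explicit spectral/diagonal picture of $T$ (which it then reuses for the spectral diagrams and the normal-operator structure theorem) and reconnects the statement to the earlier $\mathcal{AN}$-characterization in terms of $[m(T),m_e(T))$. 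Both are valid proofs of the same equivalence.
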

\begin{proof}
	Let $T \in \overline{\mathcal{AN}(H)}_+$. By Theorem \ref{T positive}, we have $T = \alpha I - K_1 + K_2$ where, $\alpha \geq 0, \ K_1, K_2 \in \mathcal{K}(H)_+$. Hence by \cite[Proposition 4.2(e), Page 358]{conway} we have, $\sigma_{ess}(T) = \sigma_{ess}(\alpha I) = \{\alpha\}$.

	Conversely, let $\sigma_{ess}(T)$ be a singleton set say $\{\alpha\}$, then $m_e(T) = \alpha$. We have $\sigma_{d}(T) := \sigma(T)\setminus \sigma_{ess}(T)$ contains only isolated eigenvalues of $T$ of finite multiplicity, hence it must be countable. Suppose $\sigma_{d}(T)$ is finite, then  the interval  $[m(T), m_e(T))$ contains only finitely many spectral points. Thus  by \cite[Theorem 2.4]{Rameshpara}, $T \in \mathcal{AN}(H) \subseteq \overline{\mathcal{AN}(H)}$.

If $\sigma_{d}(T)$ is countably infinite, then we have the following cases.
	
	Case $(1)$:	 $[m(T), m_e(T))$ contains only finitely many spectral points;\\
 Then by \cite[Theorem 2.4]{Rameshpara}, $T \in \mathcal{AN}(H)$ which implies $T \in \overline{\mathcal{AN}(H)}$.
	
Case $(2)$: $[m(T), m_e(T))$ contains countably infinite spectral points and $(m_e(T), \|T\|]$ contains only finitely many spectral points;\\
 Let $[m(T), m_e(T)) \cap \sigma_{d}(T) = \{\lambda_1, \lambda_2, \lambda_3,\dots\}$ such that $\lambda_i$'s are increasing to $m_e(T)$. Let $H_1 = \displaystyle{\bigoplus_{i=1}^{\infty}} H_i$ where, $H_i = N(T-\lambda_i I)$.
 Define $T_1:=T|_{H_1} = diag(\lambda_1, \lambda_2, \lambda_3,\dots)$, the diagonal operator. Then $T_1 \in \mathcal{B}(H_1)$ such that $\sigma(T_1) = \{\lambda_1, \lambda_2,\lambda_3,\dots \} \cup \{m_e(T)\} $. Let $(m_e(T), \|T\|] \cap \sigma_{d}(T) = \{\mu_1, \mu_2, \mu_3,\dots, \mu_n\}$ and $H_2 = \displaystyle{\bigoplus_{j=1}^{n}} \tilde{H_j}$ where, $\tilde{H_j} = N(T-\mu_jI)$. Define $T_2 = diag\{\mu_1, \mu_2,\dots,\mu_n\}$. Then $T_2 \in \mathcal{B}(H_2) $ and $\sigma(T_2) = \{\mu_1, \mu_2,\dots,\mu_n\}.$ Since $T$ is diagonalizable, we have $H = H_1 \bigoplus H_2$. Now let $K_1=m_e(T)I_{H_1} - T_1 $ and $F_1=m_e(T)I_{H_2} - T_2$. Then $K_1 \in \mathcal{K}(H_1)$ and $F_1 \in \mathcal{F}(H_2)$. It can be easily verified that $H_1$ and $H_2$ are reducing subspaces for $T$. Hence we get
	\begin{equation*}
	\begin{split}
	T & = \begin{pmatrix}
	T_1 & 0\\ 0 & T_2
	\end{pmatrix}\\
	& = \begin{pmatrix}
	m_e(T)I_{H_1} - K_1 & 0 \\ 0 & m_e(T)I_{H_2} - F_1
	\end{pmatrix}\\
	& = m_e(T) \begin{pmatrix}
	I_{H_1} & 0\\
	0 & I_{H_2}
	\end{pmatrix}
	+ \begin{pmatrix}
	-K_1 & 0\\ 0 & -F_1
	\end{pmatrix}\\
&=m_{e}(T)I+K,
	\end{split}
	\end{equation*}

	where, $K = \begin{pmatrix}
	-K_1 & 0\\ 0 & -F_1
	\end{pmatrix} \in \mathcal{K(H)}$. Hence by Proposition \ref{AN+compact}, $ T \in \overline{\mathcal{AN}(H)}$.

Case $(3)$ $[m(T), m_e(T))$ as well as $(m_e(T), \|T\|]$ contains countably infinite spectral points;\\
 Let $[m(T), m_e(T)) \cap \sigma_{d}(T) = \{\lambda_1, \lambda_2,\lambda_3,\dots\}$ such that $\lambda_i$'s are increasing to $m_e(T)$ and let $H_1 = \displaystyle{\bigoplus_{i=1}^{\infty}} H_i$ where, $H_i = N(T-\lambda_i I)$. Define $T_1$ = diag$(\lambda_1, \lambda_2,\lambda_3,\dots)$. Then $T_1 \in \mathcal{B}(H_1)$ such that $\sigma(T_1) = \{\lambda_1, \lambda_2,\lambda_3\dots\} \cup \{m_e(T)\} $. Now let $(m_e(T), \|T\|] \cap \sigma_{d}(T) = \{\mu_1, \mu_2,\mu_3,\dots\}$ such that $\mu_i$'s are decreasing to $m_e(T)$ and $H_2 = \displaystyle{\bigoplus_{j=1}^{\infty}} \tilde{H_j}$ where $\tilde{H_j} = N(T-\mu_jI)$. Define $T_2 = diag\{\mu_1, \mu_2,\mu_3,\dots\}$. Then $T_2 \in \mathcal{B}(H_2) $ and $\sigma(T_2) = \{\mu_1, \mu_2,\mu_3,\dots\} \cup \{m_e(T)\}.$ Clearly $H = H_1 \bigoplus H_2$. Now, $m_e(T)I - T_1 = K_2$ and $m_e(T)I - T_2 = K_3$ where $K_2 \in \mathcal{K}(H_1)$ and $K_3 \in \mathcal{K}(H_2)$ . Hence we get
	\begin{equation*}
	\begin{split}
	T & = \begin{pmatrix}
	T_1 & 0\\ 0 & T_2
	\end{pmatrix}\\
	& = \begin{pmatrix}
	m_e(T)I_{H_1} - K_2 & 0 \\ 0 & m_e(T)I_{H_2} - K_3
	\end{pmatrix}\\
	& = m_e(T) \begin{pmatrix}
	I_{H_1} & 0\\
	0 & I_{H_2}
	\end{pmatrix}
	+ \begin{pmatrix}
	-K_2& 0\\ 0 & -K_3
	\end{pmatrix}\\
	& = m_e(T) I + K,
	\end{split}
	\end{equation*}
	where, $K = \begin{pmatrix}
	-K_2 & 0\\ 0 & -K_3
	\end{pmatrix} \in \mathcal{K}(H)$. Hence by Proposition \ref{AN+compact}, $ T \in \overline{\mathcal{AN}(H)}$.
\end{proof}

Now, we give the spectral diagram for positive $\mathcal{AN}$-operator and positive operator in $\mathcal{AN}-closure$.  Here for $T \in \mathcal{AN}(H)_+$ (or $\overline{\mathcal{AN}(H)}_{+}$), each stroke represents the spectral points of $\sigma(T)$. These diagrams help us to understand Theorem \ref{T ess} and \cite[Theorem 2.4]{Rameshpara} in a better way and differentiate positive $\mathcal{AN}$-operators from that of positive operators in its closure.
\begin{equation}\label{spectral diagramAN}
\hspace{2cm}
\begin{tikzpicture}
\draw[orange, very thick](5,-.3)--(5,.3);
\filldraw(5,-.3)  node[anchor=north] {$\|T\|$};
\draw[very thick]  (-5,0)--(5,0);
\draw [orange, very thick] (0,-.3)--(0,.3);
\filldraw(0,-.3)  node[anchor=north] {$m_e(T)$};
\draw [orange, very thick](-5,-.3)--(-5,.3);
\filldraw(-5,-.3) node[anchor=north] {$m(T)$};
\draw (-4,-.2)--(-4,.2);
\filldraw(-2.7,-.3)  node[anchor=north] {Finitely many};
\draw (-2.2,-.2)--(-2.2,.2);
\draw (-3,-.2)--(-3,.2);
\draw (-3.5,-.2)--(-3.5,.2);
\draw (-2,-.2)--(-2,.2);
\draw (-1,-.2)--(-1,.2);
\draw (-.3,-.2)--(-.3,.2);
\draw (.1,-.2)--(.1,.2);

\draw (.05,-.2)--(.05,.2);
\draw (.10,-.2)--(.10,.2);
\draw (.15,-.2)--(.15,.2);
\draw (.21,-.2)--(.21,.2);
\draw (.28,-.2)--(.28,.2);
\draw (.37,-.2)--(.37,.2);
\draw (.47,-.2)--(.47,.2);	
\draw (.58,-.2)--(.58,.2);
\draw (.7,-.2)--(.7,.2);
\draw (.83,-.2)--(.83,.2);
\draw (.97,-.2)--(.97,.2);
\draw (1.12,-.2)--(1.12,.2);	
\draw (1.28,-.2)--(1.28,.2);
\draw (1.45,-.2)--(1.45,.2);
\draw (1.63,-.2)--(1.63,.2);
\draw (1.82,-.2)--(1.82,.2);
\draw (2.02,-.2)--(2.02,.2);
\draw (2.5,-.2)--(2.5,.2);
\draw (3.12,-.2)--(3.12,.2);
\draw (3.9,-.2)--(3.9,.2);	
\end{tikzpicture}
\end{equation}
\hspace{5cm}\textbf{Spectral diagram for positive $\mathcal{AN}$-operator}

\vspace{2cm}
{\begin{equation}\label{spectraldiagramANclosure}
\hspace{2cm}
{\begin{tikzpicture}
	\draw[orange, very thick](5,-.3)--(5,.3);
	\filldraw(5,-.3)  node[anchor=north] {$\|T\|$};
	\draw[very thick]  (-5,0)--(5,0);
	\draw [orange, very thick] (0,-.3)--(0,.3);
	\filldraw(0,-.3)  node[anchor=north] {$m_e(T)$};
	\draw [orange, very thick](-5,-.3)--(-5,.3);
	\filldraw(-5,-.3) node[anchor=north] {$m(T)$};
	\filldraw(-2.7,-.3)  node[anchor=north] {Atmost countable};
	\filldraw(2.7,-.3)  node[anchor=north] {Atmost countable};
	\draw (-4,-.2)--(-4,.2);
	\draw (-.05,-.2)--(-.05,.2);
	\draw (-.11,-.2)--(-.11,.2);
	\draw (-.15,-.2)--(-.15,.2);
	\draw (-.21,-.2)--(-.21,.2);
	\draw (-.28,-.2)--(-.28,.2);
	\draw (-.37,-.2)--(-.37,.2);
	\draw (-.47,-.2)--(-.47,.2);	
	\draw (-.58,-.2)--(-.58,.2);
	\draw (-.7,-.2)--(-.7,.2);
	\draw (-.83,-.2)--(-.83,.2);
	\draw (-.97,-.2)--(-.97,.2);
	\draw (-1.12,-.2)--(-1.12,.2);	
	\draw (-1.28,-.2)--(-1.28,.2);
	\draw (-1.45,-.2)--(-1.45,.2);
	\draw (-1.63,-.2)--(-1.63,.2);
	\draw (-1.82,-.2)--(-1.82,.2);
	\draw (-2.02,-.2)--(-2.02,.2);
	\draw (-2.5,-.2)--(-2.5,.2);
	\draw (-3.12,-.2)--(-3.12,.2);
	\draw (-3.9,-.2)--(-3.9,.2);
	
	\draw (.05,-.2)--(.05,.2);
	\draw (.10,-.2)--(.10,.2);
	\draw (.15,-.2)--(.15,.2);
	\draw (.21,-.2)--(.21,.2);
	\draw (.28,-.2)--(.28,.2);
	\draw (.37,-.2)--(.37,.2);
	\draw (.47,-.2)--(.47,.2);	
	\draw (.58,-.2)--(.58,.2);
	\draw (.7,-.2)--(.7,.2);
	\draw (.83,-.2)--(.83,.2);
	\draw (.97,-.2)--(.97,.2);
	\draw (1.12,-.2)--(1.12,.2);	
	\draw (1.28,-.2)--(1.28,.2);
	\draw (1.45,-.2)--(1.45,.2);
	\draw (1.63,-.2)--(1.63,.2);
	\draw (1.82,-.2)--(1.82,.2);
	\draw (2.02,-.2)--(2.02,.2);
	\draw (2.5,-.2)--(2.5,.2);
	\draw (3.12,-.2)--(3.12,.2);
	\draw (3.9,-.2)--(3.9,.2);	
	\end{tikzpicture}}
\end{equation}
\hspace{4cm}\textbf{Spectral diagram for positive operator in $\mathcal{AN}$-closure}}

\begin{remark}\label{property4}
\begin{enumerate}
\item{\label{projinclosure}} If $P \in \mathcal{B}(H)$ is an orthogonal projection with infinite dimensional range and null spaces then, $P \notin \overline{\mathcal{AN}(H)}$ as $\sigma_{ess}(P) =\{0,1\}$.
\item{\label{partialisometry in closure}} if $V$ is a partial isometry such that $N(V)$ and $R(V)$ are infinite dimensional, then $V\notin \overline{\mathcal{AN}(H)}$.
\item if $T\in \overline{\mathcal{AN}(H)}$ and $K\in \mathcal{K}(H)$, then $T+K\in \overline{\mathcal{AN}(H)}$.
\end{enumerate}
\end{remark}

It is clear by the definition that $\mathcal{AN}(H) \subset \mathcal{N}(H)$. But $\overline{\mathcal{AN}(H)} \nsubseteq \mathcal{N}(H)$, if $H$ is an infinite dimensional Hilbert space. This is illustrated in the following example.
\begin{example}
		Let $T :  l^2(\mathbb{N}) \to l^2(\mathbb{N})$ be defined as,
	
	\[ T (e_n)= \left(1-\frac{1}{n}\right)e_n,\  \forall \ n \in \mathbb{N}.\]
	Clearly $T \in \mathcal{B}(l^2(\mathbb{N}))_+$. As 1 is the only limit point of $\sigma(T)$ and there are no eigenvalues with infinite multiplicity, we have $\sigma_{ess}(T) = \{1\}$. Hence by Theorem \ref{T ess}, we get $T \in \overline{\mathcal{AN}(l^2(\mathbb{N}))}$. But,
	\[\|Tx\|^{2} = \sum_{n=1}^{\infty}\left(1-\frac{1}{n}\right)^2 |x_n|^{2} < \sum_{n=1}^{\infty}|x_n|^2 = \|x\|^{2}, \ \forall x = (x_1,x_2,\dots) \in l^2(\mathbb{N}).\]
	Thus $T \notin \mathcal{N}((l^2(\mathbb{N}))$.

Also $\mathcal{N}(H) \nsubseteq \overline{\mathcal{AN}(H)}$. This follows from Remark \ref{property4}(\ref{projinclosure}).
\end{example}

The following example shows that $\overline{\mathcal{AN}(H)}$ is not closed under addition.
\begin{example}
	Let $T :  l^2(\mathbb{N}) \to l^2(\mathbb{N})$ be defined as,
	
	\[ T (e_n)= \begin{cases}
	 e_n & if \ n \ is \ odd,\\
	-e_n & if\ n \ is \ even.
	\end{cases}
	\]
Since $T$ is an isometry, $T \in \overline{\mathcal{AN}(l^2(\mathbb{N}))}$. Clearly the identity operator $I \in \overline{\mathcal{AN}(l^2(\mathbb{N}))}$. Now,
\[ (T + I)(e_n)= \begin{cases}
2 e_n & if \ n \ is \ odd,\\
0 & if\ n \ is \ even.
\end{cases}
\]
Hence $T + I \notin \overline{\mathcal{AN}(l^2(\mathbb{N}))}$ as $\sigma_{ess}(T+I)={\{0,2}\}$.
\end{example}

In general, $T \in \overline{\mathcal{AN}(H)}$ need not imply $T^* \in \overline{\mathcal{AN}(H)}$ which is shown in the following example.
\begin{example}
	Let Let $T :  l^2(\mathbb{N}) \to l^2(\mathbb{N})$ be defined by,
	\begin{equation*}
T((x_1, x_2, x_3,\dots)) = (x_1, 0, x_2, 0, x_3, 0,\dots).
	\end{equation*}
As $T$ is an isometry, $T \in \overline{\mathcal{AN}(l^2(\mathbb{N}))}$.

Now the adjoint of $T$ is given by $T^* :  l^2(\mathbb{N}) \to l^2(\mathbb{N})$ satisfying
\begin{equation*}
T^*((x_1, x_2, x_3,\dots)) = (x_1, x_3, x_5,\dotsb).
\end{equation*}
$T^*$ is a partial isometry with infinite dimensional range and null spaces and  hence by Remark \ref{property4}(\ref{partialisometry in closure}) we get, $T^* \notin \overline{\mathcal{AN}(l^2(\mathbb{N}))}$.
\end{example}
Hence we give a sufficient condition for the adjoint of the operator in $\mathcal{AN}$-closure to be in $\mathcal{AN}$-closure which is similar to the condition for $\mathcal{AN}$-operator given in \cite{Rameshpara}.
\begin{corollary} \label{T*AN-closure}
	Let $H$ be an infinite dimensional Hilbert space and $T \in \mathcal{B}(H)$. If $\sigma_{ess}(T^*T) = \sigma_{ess}(TT^*)$ then, $T \in \overline{\mathcal{AN}(H)}$ if and only if $T^* \in \overline{\mathcal{AN}(H)}$.
\end{corollary}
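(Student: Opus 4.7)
The plan is to reduce the statement to the positive case via Theorem \ref{T*T in AN-closure} and then invoke the essential-spectrum characterization of Theorem \ref{T ess}. Note that $T^*T$ and $TT^*$ are both positive operators on $H$, so they are natural inputs for Theorem \ref{T ess}.

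Suppose $T \in \overline{\mathcal{AN}(H)}$. By Theorem \ref{T*T in AN-closure}, this is equivalent to $T^*T \in \overline{\mathcal{AN}(H)}_+$, and by Theorem \ref{T ess} this holds if and only if $\sigma_{ess}(T^*T)$ is a singleton. Using the hypothesis $\sigma_{ess}(T^*T)=\sigma_{ess}(TT^*)$, we deduce that $\sigma_{ess}(TT^*)$ is also a singleton. Applying Theorem \ref{T ess} again (now to the positive operator $TT^*$), we obtain $TT^*\in \overline{\mathcal{AN}(H)}_+$. Since $TT^*=(T^*)^*T^*$, a second application of Theorem \ref{T*T in AN-closure}, this time to the operator $T^*$, yields $T^*\in \overline{\mathcal{AN}(H)}$.

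The converse direction is entirely symmetric: start with $T^*\in \overline{\mathcal{AN}(H)}$, apply Theorem \ref{T*T in AN-closure} to obtain $TT^*\in \overline{\mathcal{AN}(H)}_+$, use Theorem \ref{T ess} together with the hypothesis to transfer the singleton essential-spectrum property from $TT^*$ to $T^*T$, and then apply Theorem \ref{T*T in AN-closure} once more to conclude $T\in \overline{\mathcal{AN}(H)}$. There is no real obstacle here; the only thing to check is that the two invocations of Theorem \ref{T ess} are legitimate, which they are because $T^*T\geq 0$ and $TT^*\geq 0$, so the hypothesis of that theorem is satisfied in both directions. The role of the assumption $\sigma_{ess}(T^*T)=\sigma_{ess}(TT^*)$ is precisely to bridge the ``$T$-side'' and the ``$T^*$-side'' of the equivalence.
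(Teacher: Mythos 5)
Your proposal is correct and follows essentially the same route as the paper: pass from $T$ to $T^*T$ via Theorem \ref{T*T in AN-closure}, use Theorem \ref{T ess} and the hypothesis to transfer the singleton essential spectrum to $TT^*$, and return to $T^*$ by a second application of Theorem \ref{T*T in AN-closure}, with the converse by symmetry. No gaps.
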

\begin{proof}
	 If $T \in \overline{\mathcal{AN}(H)}$, then by Theorem \ref{T*T in AN-closure}, $T^*T \in \overline{\mathcal{AN}(H)}$. Now we have, $\sigma_{ess}(T^*T) = \{\alpha\} = \sigma_{ess}(TT^*)$. Hence by using Theorem \ref{T ess},  $TT^* \in \overline{\mathcal{AN}(H)}$. Again by Theorem \ref{T*T in AN-closure}, $T^* \in \overline{\mathcal{AN}(H)}$.
	
	 The converse follows similarly as $T = (T^*)^*$.
\end{proof}

\begin{prop} \label{ess(T*T)=ess(TT^*)}
Let $H$ be an infinite dimensional Hilbert space. If  $T,T^*\in \overline{\mathcal{AN}(H)}$, then $\sigma_{ess}(T^*T)=\sigma_{ess}(TT^*)$.
\end{prop}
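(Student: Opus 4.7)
The plan is to reduce the statement to a short algebraic identity, once the structure theorems proved earlier are in hand. From $T, T^{*}\in \overline{\mathcal{AN}(H)}$, two applications of Theorem \ref{T*T in AN-closure} yield $T^{*}T,\ TT^{*}\in \overline{\mathcal{AN}(H)}_{+}$. Theorem \ref{T ess} then forces $\sigma_{ess}(T^{*}T)=\{\alpha\}$ and $\sigma_{ess}(TT^{*})=\{\beta\}$ for some $\alpha,\beta\ge 0$, and Theorem \ref{T positive} supplies self-adjoint compact operators $K_{1},K_{2}$ such that
\begin{equation*}
T^{*}T=\alpha I+K_{1}, \qquad TT^{*}=\beta I+K_{2}.
\end{equation*}
(The latter uses that the $K_{2}-K_{1}$ occurring in the representation of Theorem \ref{T positive} is compact and self-adjoint.) The problem thus reduces to showing $\alpha=\beta$.

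For this I would exploit the trivial associativity identity $T(T^{*}T)=(TT^{*})T$. Substituting the two decompositions above gives $\alpha T+TK_{1}=\beta T+K_{2}T$, whence
\begin{equation*}
(\alpha-\beta)\,T \;=\; K_{2}T-TK_{1}\;\in\;\mathcal{K}(H).
\end{equation*}
If $\alpha\ne \beta$, this forces $T\in \mathcal{K}(H)$, so both $T^{*}T$ and $TT^{*}$ are compact, and consequently $\sigma_{ess}(T^{*}T)=\sigma_{ess}(TT^{*})=\{0\}$, giving $\alpha=\beta=0$, a contradiction. Therefore $\alpha=\beta$, which is exactly the desired equality.

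There is no real obstacle in this argument; the entire content is in the preparatory step of rewriting $T^{*}T$ and $TT^{*}$ as scalar-plus-compact via Theorem \ref{T ess} and Theorem \ref{T positive}. After that, the computation $T(T^{*}T)=(TT^{*})T$ does all the work, and the compact/non-compact dichotomy for $T$ closes the case analysis.
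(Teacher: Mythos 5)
Your proof is correct, but it takes a genuinely different route from the paper's. The paper invokes the general identity $\sigma_{ess}(T^{*}T)\setminus\{0\}=\sigma_{ess}(TT^{*})\setminus\{0\}$ (cited from M\"{u}ller's book, i.e.\ the fact that $\sigma(ab)\setminus\{0\}=\sigma(ba)\setminus\{0\}$ applied in the Calkin algebra), which immediately reduces the whole proposition to deciding whether $0$ lies in both essential spectra or in neither; that last point is settled by the same compactness dichotomy you use at the end ($\sigma_{ess}(T^{*}T)=\{0\}$ forces $T^{*}T$, hence $T$, hence $TT^{*}$ compact). You instead stay entirely inside the paper's own machinery: Theorem \ref{T*T in AN-closure} and Theorem \ref{T ess} give the two singletons $\{\alpha\}$ and $\{\beta\}$, Theorem \ref{T positive} gives the scalar-plus-compact forms, and the associativity identity $T(T^{*}T)=(TT^{*})T$ yields $(\alpha-\beta)T\in\mathcal{K}(H)$, after which the compact case collapses to $\alpha=\beta=0$. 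Your version buys self-containedness (no external spectral identity is needed, and the argument makes visible exactly where the hypothesis $T,T^{*}\in\overline{\mathcal{AN}(H)}$ enters), at the cost of being slightly longer; the paper's version is shorter and exploits a standard general fact that also explains why the nonzero parts of the two essential spectra always agree, independently of any $\mathcal{AN}$ hypothesis. Every step of your argument checks out: both $T^{*}T$ and $TT^{*}$ are positive elements of the closure, the representations $T^{*}T=\alpha I+K_{1}$ and $TT^{*}=\beta I+K_{2}$ with $K_{1},K_{2}$ compact self-adjoint follow from Theorem \ref{T positive} together with Weyl's theorem identifying $\alpha$ and $\beta$ with the essential spectra, and $K_{2}T-TK_{1}$ is indeed compact.
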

\begin{proof}
	By \cite[Theorem 6, Page 173]{Muller}, we have $\sigma_{ess}(T^*T) \setminus \{0\} = \sigma_{ess}(TT^*) \setminus \{0\}$. So it is enough to show, $ 0 \in \sigma_{ess}(T^*T) $ if and only if $0 \in \sigma_{ess}(TT^*)$.
	
	Let $0 \in \sigma_{ess}(T^*T)$. Since $T \in \overline{\mathcal{AN}(H)}$ iff $T^*T \in \overline{\mathcal{AN}(H)}$, we get $\sigma_{ess}(T^*T) = \{0\}$ by using Theorem \ref{T ess}. Hence $T^*T$ is compact which implies $T$ is compact. Therefore, $TT^*$ is compact and $0 \in \sigma_{ess}(TT^*)$. Similarly by using $T^* \in \overline{\mathcal{AN}(H)}$, we can show that if $0 \in \sigma_{ess}(TT^*)$, then $0 \in \sigma_{ess}(T^*T)$.
\end{proof}

Combining Corollary \ref{T*AN-closure} and Proposition \ref{ess(T*T)=ess(TT^*)} we can state the following result. For $\mathcal{AN}$-operators similar result is proved \cite[Proposition 3.3]{NBGR}.

\begin{theorem}\label{2imply3condition}
Let $H$ be an infinite dimensional Hilbert space and $T\in \mathcal{B}(H)$. Then any of the two conditions in the following imply the third one.
\begin{enumerate}
\item $T\in \overline{\mathcal{AN}(H)}$.
\item $T^*\in \overline{\mathcal{AN}(H)}$.
\item $\sigma_{ess}(T^*T)=\sigma_{ess}(TT^*)$.
\end{enumerate}
\end{theorem}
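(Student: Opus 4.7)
The plan is to observe that this theorem is essentially a repackaging of the two results immediately preceding it, and to verify each of the three possible implications separately by invoking exactly one of them. So I would simply label the three cases "$(1)\wedge(2)\Rightarrow(3)$", "$(1)\wedge(3)\Rightarrow(2)$", and "$(2)\wedge(3)\Rightarrow(1)$", and dispatch each in one line.

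For the first implication $(1)\wedge(2)\Rightarrow(3)$, I would quote Proposition \ref{ess(T*T)=ess(TT^*)} verbatim: its hypothesis is precisely $T,T^*\in \overline{\mathcal{AN}(H)}$ and its conclusion is $\sigma_{ess}(T^*T)=\sigma_{ess}(TT^*)$, so there is nothing to do. For the second implication $(1)\wedge(3)\Rightarrow(2)$, I would apply the forward direction of Corollary \ref{T*AN-closure}: given $\sigma_{ess}(T^*T)=\sigma_{ess}(TT^*)$ and $T\in \overline{\mathcal{AN}(H)}$, the corollary concludes $T^*\in \overline{\mathcal{AN}(H)}$. For the third implication $(2)\wedge(3)\Rightarrow(1)$, I would apply the reverse direction of the same corollary, using the self-adjointness of the condition $\sigma_{ess}(T^*T)=\sigma_{ess}(TT^*)$ (which is symmetric in $T$ and $T^*$) together with the fact that $T=(T^*)^*$, so putting $S=T^*$ in Corollary \ref{T*AN-closure} yields $T=S^*\in \overline{\mathcal{AN}(H)}$.

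No new ideas are needed; the only minor point to make explicit is that in the last implication one should note the hypothesis of Corollary \ref{T*AN-closure} is unchanged when we replace $T$ by $T^*$, since $(T^*)^*(T^*)=TT^*$ and $(T^*)(T^*)^*=T^*T$. There is no real obstacle — the hard work has already been done in the preceding corollary and proposition, and the present theorem is purely a formal statement of the three-way equivalence of implications embedded in those two results.
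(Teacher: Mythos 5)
Your proposal is correct and is exactly the argument the paper intends: the paper simply states the theorem as obtained "by combining" Corollary \ref{T*AN-closure} and Proposition \ref{ess(T*T)=ess(TT^*)}, which is precisely your three-case dispatch. Nothing is missing.
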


\begin{prop}
	Let $H_1, H_2$ be infinite dimensional Hilbert spaces and $T_i \in \overline{\mathcal{AN}(H_i)}_+$ for $i= 1,2$. Then $T_1 \oplus T_2 \in \overline{\mathcal{AN}(H_1 \oplus H_2)}$ iff $\sigma_{ess}(T_1) = \sigma_{ess}(T_2)$.
\end{prop}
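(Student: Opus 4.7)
The plan is to reduce this immediately to Theorem \ref{T ess}, which characterizes positive operators in $\overline{\mathcal{AN}(H)}$ by the singleton property of the essential spectrum. Since $T_1 \oplus T_2$ is positive on $H_1 \oplus H_2$, it lies in $\overline{\mathcal{AN}(H_1 \oplus H_2)}_+$ if and only if $\sigma_{ess}(T_1 \oplus T_2)$ is a singleton. So the whole statement will follow once I show that $\sigma_{ess}(T_1 \oplus T_2) = \sigma_{ess}(T_1) \cup \sigma_{ess}(T_2)$, because by hypothesis each $\sigma_{ess}(T_i)$ is a singleton $\{\alpha_i\}$ (this uses the forward direction of Theorem \ref{T ess}), and $\{\alpha_1\} \cup \{\alpha_2\}$ is a singleton exactly when $\alpha_1 = \alpha_2$, i.e., $\sigma_{ess}(T_1) = \sigma_{ess}(T_2)$.

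To establish the union formula for the essential spectrum of a self-adjoint direct sum, I would use Theorem \ref{T positive} to write $T_i = \alpha_i I_{H_i} - K_{1,i} + K_{2,i}$ with $K_{1,i}, K_{2,i} \in \mathcal{K}(H_i)_+$. Then
\begin{equation*}
T_1 \oplus T_2 = (\alpha_1 I_{H_1} \oplus \alpha_2 I_{H_2}) + K,
\end{equation*}
where $K = (-K_{1,1}+K_{2,1}) \oplus (-K_{1,2}+K_{2,2}) \in \mathcal{K}(H_1 \oplus H_2)$. Since compact perturbations preserve the essential spectrum by \cite[Proposition 4.2(e), Page 358]{conway}, we get $\sigma_{ess}(T_1 \oplus T_2) = \sigma_{ess}(\alpha_1 I_{H_1} \oplus \alpha_2 I_{H_2}) = \{\alpha_1, \alpha_2\}$, where the last equality is immediate (either from the spectral theorem or from the observation that $(\alpha_1 I_{H_1} \oplus \alpha_2 I_{H_2}) - \lambda I$ is Fredholm iff $\lambda \notin \{\alpha_1, \alpha_2\}$, using that $H_1, H_2$ are infinite dimensional).

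With this in hand, the forward direction reads: if $T_1 \oplus T_2 \in \overline{\mathcal{AN}(H_1 \oplus H_2)}$, then by Theorem \ref{T ess} the set $\{\alpha_1,\alpha_2\}$ is a singleton, forcing $\alpha_1=\alpha_2$. The reverse direction reads: if $\sigma_{ess}(T_1)=\sigma_{ess}(T_2)=\{\alpha\}$, then $T_1 \oplus T_2 = \alpha I_{H_1 \oplus H_2} + K$ with $K$ compact, so $\sigma_{ess}(T_1 \oplus T_2) = \{\alpha\}$ is a singleton and Theorem \ref{T ess} gives $T_1 \oplus T_2 \in \overline{\mathcal{AN}(H_1 \oplus H_2)}$. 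Alternatively, one could invoke Proposition \ref{AN+compact} on the explicit form $\alpha I + K$ to avoid reusing Theorem \ref{T ess}.

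The only nontrivial step is the union formula for the essential spectrum, and even that is reduced by the structure theorem to computing the essential spectrum of the diagonal multiple of the identity, so there is no real obstacle; everything follows by assembling Theorem \ref{T positive} and Theorem \ref{T ess}.
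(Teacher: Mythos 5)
Your proposal is correct and follows essentially the same route as the paper: decompose each $T_i$ via Theorem \ref{T positive}, observe that $T_1\oplus T_2$ is a compact perturbation of $\alpha_1 I_{H_1}\oplus\alpha_2 I_{H_2}$, and apply Theorem \ref{T ess}. You are in fact slightly more explicit than the paper at the one step it glosses over, namely verifying that $\sigma_{ess}(\alpha_1 I_{H_1}\oplus\alpha_2 I_{H_2})=\{\alpha_1,\alpha_2\}$ because both summands are infinite dimensional.
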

\begin{proof}
	By Theorem \ref{T positive}, we have $T_1 = \alpha I_{H_1} - K_1 + K_2$ and $ T_2 = \beta I_{H_2} - K_3 + K_4$ where, $\alpha, \beta \geq 0,  K_1, K_2\in \mathcal{K}(H_1)_+$ and $K_3, K_4\in \mathcal{K}(H_2)_+$. Now,
	\begin{equation*}
	\begin{split}
	T_1 \oplus T_2 & = \begin{pmatrix}
	\alpha I_{H_1} - K_1 + K_2 & 0 \\ 0 & \beta I_{H_2} - K_3 + K_4
	\end{pmatrix}\\
	& =  \begin{pmatrix}
	\alpha I_{H_1} & 0 \\ 0 & \beta I_{H_2}
	\end{pmatrix}
	-
	\begin{pmatrix}
	K_1 & 0 \\ 0 & K_3
	\end{pmatrix}
	+
	\begin{pmatrix}
	K_2 & 0 \\ 0 & K_4
	\end{pmatrix}.
	\end{split}
	\end{equation*}
	Hence $T_1 \oplus T_2 \in \overline{\mathcal{AN}(H_1 \oplus H_2)}$ iff $\sigma_{ess}(T_1 \oplus T_2)$ is singleton by Theorem \ref{T ess} iff $\alpha = \beta$ iff $\sigma_{ess}(T_1) = \sigma_{ess}(T_2)$.
\end{proof}

\begin{remark}\label{finitedirectsum}
Let $H_j$ be infinite dimensional Hilbert spaces and $T_j\in \mathcal B(H_j)_{+}$ for $j=1,2,\dots,n$ with $\sigma_{ess}(T_j)={\{\alpha}\}$, for some $\alpha\geq 0$. Then $\bigoplus_{j=1}^nT_j\in \overline{\mathcal{AN}(\oplus_{j=1}^nH_j)}$.
\end{remark}
\section{Self-adjoint and normal operators in $\overline{\mathcal{AN}(H)}$}
In this section we give a structure theorem for self-adjoint operators and normal operators in $\overline{\mathcal{AN}(H)}$. All the results in this section generalizes results from \cite{venkuramesh} and \cite{Rameshpara}.

\begin{theorem}
Let $H$ be an infinite dimensional Hilbert space and $T \in \overline{\mathcal{AN}(H)}$ be a self-adjoint operator. Then the essential spectrum of $T$ contains atmost two points. Moreover if $\alpha_1, \alpha_2 \in \sigma_{ess}(T)$, then $\alpha_1 = \pm \alpha_2$.
\end{theorem}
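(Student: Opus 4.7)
The plan is to reduce to the positive case already handled by Theorem \ref{T ess} via the observation $T^*T = T^2$ for self-adjoint $T$. First I would apply Theorem \ref{T*T in AN-closure} to conclude that $T^2 \in \overline{\mathcal{AN}(H)}_+$, and then invoke Theorem \ref{T ess} to get that $\sigma_{ess}(T^2)$ is a singleton $\{\alpha\}$ for some $\alpha \geq 0$.

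Next I would use the spectral mapping theorem for the essential spectrum applied to the continuous function $f(t) = t^2$ on $\sigma(T) \subset \mathbb{R}$. Since $T$ is self-adjoint (in fact normal), this gives
\begin{equation*}
\{\alpha\} = \sigma_{ess}(T^2) = \{\lambda^2 : \lambda \in \sigma_{ess}(T)\}.
\end{equation*}
Therefore every $\lambda \in \sigma_{ess}(T)$ must satisfy $\lambda^2 = \alpha$, so $\lambda \in \{\sqrt{\alpha}, -\sqrt{\alpha}\}$. This immediately yields both conclusions: $\sigma_{ess}(T)$ contains at most two points, and if two points $\alpha_1, \alpha_2$ are present then they are negatives of each other, i.e.\ $\alpha_1 = \pm \alpha_2$.

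The only potential obstacle is citing (or, if the paper prefers, briefly justifying) the spectral mapping theorem for $\sigma_{ess}$ in the self-adjoint case; this is standard and follows from the continuous functional calculus together with the fact that $\pi(f(T)) = f(\pi(T))$ in the Calkin algebra for self-adjoint $T$ and continuous $f$ (equivalently, see \cite[Section 3.14]{BarrySimon4} cited earlier). Once this is in hand the proof is essentially a single line, and no case analysis on whether $\alpha = 0$ or $\alpha > 0$ is needed (if $\alpha = 0$ we simply get $\sigma_{ess}(T) = \{0\}$, a single point).
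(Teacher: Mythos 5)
Your proof is correct, and it rests on the same key reduction the paper uses: pass to $T^2$, which lies in $\overline{\mathcal{AN}(H)}_+$ (the paper gets this from $T^2=T^*T$ together with Theorem \ref{T*T in AN-closure}, exactly as you do; Theorem \ref{productinclosure} would also serve), and then invoke Theorem \ref{T ess} to see that $\sigma_{ess}(T^2)$ is a singleton. Where you diverge is in the transfer step from $\sigma_{ess}(T)$ to $\sigma_{ess}(T^2)$: the paper avoids quoting a spectral mapping theorem and instead runs a three-case analysis using the Weyl-type characterization of Theorem \ref{ess spectrum of self-adjoint} (eigenvalue of infinite multiplicity, limit point of $\sigma_p(T)$, or point of $\sigma_c(T)$), checking in each case by hand that $\alpha_j\in\sigma_{ess}(T)$ forces $\alpha_j^2\in\sigma_{ess}(T^2)$. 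Your route is cleaner and needs only the inclusion $f(\sigma_{ess}(T))\subseteq\sigma_{ess}(f(T))$ for $f(t)=t^2$; with the paper's Definition \ref{ess_spectrum} of $\sigma_{ess}$ via the Calkin algebra this is immediate from the polynomial spectral mapping theorem applied to $\pi(T)^2=\pi(T^2)$, so you do not even need the continuous functional calculus you mention as a potential obstacle. What the paper's longer argument buys is self-containment relative to the tools it has already set up (Theorem \ref{ess spectrum of self-adjoint}); what yours buys is brevity and the observation that the case split is unnecessary.
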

\begin{proof}
	Let $\alpha_j \in \sigma_{ess}(T),$ for $j = 1,2,3$. Then by Theorem \ref{ess spectrum of self-adjoint}, we have either $\alpha_j$ is an eigenvalue of infinite multiplicity or it is the limit point of $\sigma_p(T)$ or $\alpha_j \in \sigma_c(T)$. But by \cite[Proposition 4.4.5, Page 157]{Pedersen}, $\alpha_j \in \sigma_c(T)$ implies $\alpha_j$ is a limit point of $\sigma(T)$. Hence we have the following cases.\\
	\textbf{Case $(1)$} If $\alpha_j \in \sigma_p(T)$ such that $N(T-\alpha_jI)$ is infinite dimensional for each $j=1,2,3$. Since $N(T - \alpha_jI) \subset N(T^2- \alpha^2_jI)$, we have $\alpha^2_j$ is an eigenvalue of $T^2$ for $j= 1,2,3$. Now $T^2 \in \overline{\mathcal{AN}(H)}$ is positive, hence by Theorem \ref{T ess} we get $\alpha^2_1= \alpha^2_2 = \alpha^2_3$. Therefore $\alpha_1 = \pm \alpha_2 = \pm \alpha_3$.\\
	\textbf{Case $(2)$} If any one of $\alpha_j$'s is a limit point of $\sigma(T)$. Without loss of generality let $\alpha_1$ be the limit point of $\sigma(T)$ and $\alpha_2, \alpha_3$ are eigenvalues with infinite multiplicity, then $\alpha^2_1$ is the limit point of $\sigma(T^2)$ and $\alpha^2_2, \alpha^2_3 \in \sigma_p(T^2)$. As $T^2 \in \overline{\mathcal{AN}(H)}$ is positive, by Theorem \ref{T ess} we get $\alpha^2_1= \alpha^2_2 = \alpha^2_3$. Therefore $\alpha_1 = \pm \alpha_2 = \pm \alpha_3$.\\
	\textbf{Case $(3)$} If each $\alpha_j$ is the limit point of $\sigma(T)$, then $\alpha^2_j$ is the limit point of $\sigma(T^2)$ for all $j = 1,2,3$. Since $T^2 \in \overline{\mathcal{AN}(H)}$ is positive, from Theorem \ref{T ess} we get $\alpha^2_1= \alpha^2_2 = \alpha^2_3$. Therefore $\alpha_1 = \pm \alpha_2 = \pm \alpha_3$.
\end{proof}	
%
%

\begin{theorem}\label{T selfadjoint}
	Let $H$ be an infinite dimensional Hilbert space and $T \in \overline{\mathcal{AN}(H)}$ be self-adjoint. Let $T = W|T|$ be the polar decomposition of $T$. Then $T = \alpha W - K_1 + K_2$ where $\alpha \geq 0$, $K_1, K_2$ are self-adjoint compact operators with $K_1K_2 = 0$ and $K^2_1 \leq \alpha^2 I$.
\end{theorem}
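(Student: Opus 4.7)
The plan is to reduce the statement to the positive case by decomposing $|T|$ via Theorem~\ref{T positive} and then multiplying through by the partial isometry $W$. Since $T\in\overline{\mathcal{AN}(H)}$, Lemma~\ref{|T|AN-closure} gives $|T|\in\overline{\mathcal{AN}(H)}_{+}$, and applying Theorem~\ref{T positive} to $|T|$ yields $\alpha\ge 0$ together with $A_{1},A_{2}\in\mathcal{K}(H)_{+}$ satisfying $A_{1}A_{2}=0$ and $A_{1}\le\alpha I$, such that
\[
|T| \;=\; \alpha I - A_{1} + A_{2}.
\]

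The key structural observation I would then establish is that, because $T$ is self-adjoint, the partial isometry $W$ in its polar decomposition is itself self-adjoint and commutes with $|T|$. This can be verified by Borel functional calculus on $T$: the operator $V:=\operatorname{sgn}(T)$ (with $\operatorname{sgn}(0)=0$) is self-adjoint, satisfies $V|T|=|T|V=T$ and $N(V)=N(T)$, and uniqueness of the polar decomposition forces $W=V$. Because $A_{1}=(|T|-\alpha I)^{-}$ and $A_{2}=(|T|-\alpha I)^{+}$ are spectral functions of $|T|$, $W$ commutes with both $A_{1}$ and $A_{2}$.

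With these commutations in hand, set $K_{1}:=WA_{1}$ and $K_{2}:=WA_{2}$. Left-multiplying the decomposition of $|T|$ by $W$ gives immediately
\[
T \;=\; W|T| \;=\; \alpha W - K_{1} + K_{2}.
\]
The remaining properties are then mechanical: self-adjointness of $K_{i}$ comes from $(WA_{i})^{*}=A_{i}W=WA_{i}$, compactness is inherited from $A_{i}$, the orthogonality $K_{1}K_{2}=W^{2}A_{1}A_{2}=0$ follows from $A_{1}A_{2}=0$, and the required norm bound reads
\[
K_{1}^{2} \;=\; K_{1}^{*}K_{1} \;=\; A_{1}W^{2}A_{1} \;=\; A_{1}P_{N(T)^{\perp}}A_{1} \;\le\; A_{1}^{2} \;\le\; \alpha^{2}I.
\]
The only step that is not pure bookkeeping is the self-adjointness of $W$ and its commutation with $|T|$; once this is granted, the theorem reduces to multiplying the positive decomposition of $|T|$ through by $W$.
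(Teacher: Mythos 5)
Your proof is correct, and it follows the same overall skeleton as the paper's: apply Theorem~\ref{T positive} to $|T|$ (which lies in $\overline{\mathcal{AN}(H)}_{+}$ by Lemma~\ref{|T|AN-closure}), set $K_i = W A_i$, and verify the listed properties, all of which hinge on $W$ commuting with $A_1$ and $A_2$ separately. Where you diverge is precisely at that hinge. The paper uses only the single commutation $W|T|=|T|W$ (quoted from Kubrusly) and then recovers commutation with each piece algebraically: $W$ commutes with $\tilde{K_2}-\tilde{K_1}$, hence with $(\tilde{K_2}-\tilde{K_1})^2=(\tilde{K_2}+\tilde{K_1})^2$ (using $\tilde{K_1}\tilde{K_2}=0$), hence with the positive square root $\tilde{K_1}+\tilde{K_2}$, hence with each summand; self-adjointness of $K_i=W\tilde{K_i}$ then follows from the commutation. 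You instead identify $W=\operatorname{sgn}(T)$ by the Borel functional calculus and invoke the uniqueness clause of Theorem~\ref{T positive} to recognize $A_1=(|T|-\alpha I)^{-}$ and $A_2=(|T|-\alpha I)^{+}$ as continuous functions of $|T|$, so that self-adjointness of $W$ and all the needed commutations come in one stroke. Both routes are sound; yours is more conceptual, while the paper's is more elementary in that it never needs to know that the compact pieces are functions of $|T|$ and would survive without the uniqueness statement in Theorem~\ref{T positive}. Your closing estimate $K_1^2=A_1W^2A_1=A_1P_{N(T)^{\perp}}A_1\le A_1^2\le\alpha^2 I$ is a slightly more transparent presentation than the paper's $K_1^2\le\|K_1\|^2 I\le\alpha^2 I$, though the two are equivalent here.
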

\begin{proof}
	Since $T$ is self-adjoint, $W$ is also self-adjoint. Now, as $T \in \overline{\mathcal{AN}(H)}$ we have $|T| \in \overline{\mathcal{AN}(H)}$. By Theorem \ref{T positive}, we have $|T| = \alpha I - \tilde{K_1} + \tilde{K_2}$ where, $\tilde{K_1}, \tilde{K_2} \in \mathcal{K}(H)_+, \alpha \geq 0, \tilde{K_1} \tilde{K_2} = 0$ and $\tilde{K_1} \leq \alpha I$. Then $T = \alpha W -K_1 + K_2$ where, $K_1 = W \tilde{K_1}$ and $K_2 = W \tilde{K_2}$ are compact. We have $W|T| = |T|W$, by \cite[Proposition 6.4, Page 444]{Kubrusly}. Hence we get $W (\tilde{K_2} - \tilde{K_1}) = (\tilde{K_2}- \tilde{K_1}) W$. Since $W$ commutes with $(\tilde{K_2} - \tilde{K_1})$, it also commute with $(\tilde{K_2} - \tilde{K_1})^2$. Therefore we get,
	\begin{align*}
	W (\tilde{K_2} + \tilde{K_1})^2 =W (\tilde{K_2}^2 + \tilde{K_1}^2)= W(\tilde{K_2} - \tilde{K_1})^2
                                    & = (\tilde{K_2} - \tilde{K_1})^2 W \\
                                    &= (\tilde{K_2}^2 + \tilde{K_1}^2) W \\
                                    &= (\tilde{K_2} + \tilde{K_1})^2 W.
	\end{align*}
		As $\tilde{K_2} + \tilde{K_1}$ is positive, we get $W (\tilde{K_1} + \tilde{K_2}) = (\tilde{K_1} + \tilde{K_2}) W$. Thus $W\tilde{K_1} = \tilde{K_1}W$ and $W\tilde{K_2} = \tilde{K_2}W$. Now by using \cite[Problem 5.45(c), Page 428]{Kubrusly}, we get $K^*_1 = K_1$ and $K^*_2 = K_2$. Also, $K_1K_2 = W\tilde{K_1}W\tilde{K_2} = W\tilde{K_1}\tilde{K_2}W = 0$. Finally we have, $K^2_1 \leq \|K_1\|^2 I \leq \alpha^2 I$.
\end{proof}

Next we discuss about a representation of normal operators in $\overline{\mathcal{AN}(H)}$.
\begin{theorem}\label{T normal}
	Let $H$ be an infinite dimensional Hilbert space and $T \in \overline{\mathcal{AN}(H)}$ be normal. Let $T = W|T|$ be the polar decomposition of $T$. Then $T = \alpha W - K_1 + K_2$ where $\alpha \geq 0$, $K_1, K_2$ are normal compact operators, $K_1K_2 = 0$ and $K^*_1 K_1 \leq \alpha^2 I$.
\end{theorem}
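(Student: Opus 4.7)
The plan is to mimic the proof of Theorem \ref{T selfadjoint} but exploit the additional structure coming from normality. First, since $T \in \overline{\mathcal{AN}(H)}$, Lemma \ref{|T|AN-closure} gives $|T| \in \overline{\mathcal{AN}(H)}_{+}$, so by Theorem \ref{T positive} we may write
\begin{equation*}
|T| \;=\; \alpha I - \tilde K_1 + \tilde K_2,
\end{equation*}
with $\alpha \geq 0$, $\tilde K_1,\tilde K_2 \in \mathcal{K}(H)_{+}$, $\tilde K_1\tilde K_2 = 0$, and $\tilde K_1 \leq \alpha I$. Setting $K_1 := W\tilde K_1$ and $K_2 := W\tilde K_2$, we immediately get $T = W|T| = \alpha W - K_1 + K_2$ with $K_1,K_2\in \mathcal{K}(H)$.

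Next I would establish that $W$ commutes with both $\tilde K_1$ and $\tilde K_2$. For this, note that $W$ commutes with $|T|$ by \cite[Proposition 6.4, Page 444]{Kubrusly}, so $W$ commutes with $\tilde K_2 - \tilde K_1$. Squaring and using $\tilde K_1 \tilde K_2 = 0$ gives $W(\tilde K_1+\tilde K_2)^2 = (\tilde K_1+\tilde K_2)^2 W$; since $\tilde K_1+\tilde K_2$ is positive, uniqueness of the positive square root (applied as in the proof of Theorem \ref{T selfadjoint}) yields $W(\tilde K_1+\tilde K_2) = (\tilde K_1+\tilde K_2)W$, and therefore $W\tilde K_j = \tilde K_j W$ for $j=1,2$. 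Taking adjoints of these identities (and using that each $\tilde K_j$ is self-adjoint) also gives $W^*\tilde K_j = \tilde K_j W^*$.

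The key extra ingredient for normality of $K_1,K_2$ is that $T$ normal forces $W$ to be normal: indeed, $N(W) = N(T) = N(T^*) = \overline{R(T)}^{\perp}$, so $W^*W = P_{N(T)^{\perp}} = P_{\overline{R(T)}} = WW^*$. Combining this with the commutation relations above, I compute
\begin{equation*}
K_1^*K_1 = \tilde K_1 W^*W\tilde K_1 = \tilde K_1^{\,2}\,W^*W = \tilde K_1^{\,2}\,WW^* = W\tilde K_1^{\,2}W^* = K_1K_1^*,
\end{equation*}
so $K_1$ is normal; the analogous computation handles $K_2$. For the product condition, $K_1K_2 = W\tilde K_1 W\tilde K_2 = W^2\tilde K_1\tilde K_2 = 0$. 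Finally, since $\tilde K_1\leq \alpha I$ implies $\|\tilde K_1\|\leq \alpha$ and $W^*W$ is a projection commuting with $\tilde K_1^{\,2}$, we obtain $K_1^*K_1 = \tilde K_1^{\,2}W^*W \leq \tilde K_1^{\,2} \leq \alpha^2 I$, completing the claim.

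The main obstacle is the normality of the $K_j$: it genuinely requires two commutations to line up simultaneously, namely $W$ commuting with $\tilde K_j$ (which comes from the polar-decomposition commutation $W|T|=|T|W$) and $W^*W = WW^*$ (which is where normality of $T$ enters). Once both are in hand, the rest of the statement follows by direct substitution in the representation of $|T|$ provided by Theorem \ref{T positive}.
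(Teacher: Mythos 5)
Your proposal is correct and follows essentially the same route as the paper: the decomposition $|T| = \alpha I - \tilde K_1 + \tilde K_2$ from Theorem \ref{T positive}, the commutation of $W$ with $\tilde K_1$ and $\tilde K_2$ obtained by squaring and invoking uniqueness of the positive square root, and then the normality of $W$ (which the paper asserts and you justify explicitly via $N(T)=N(T^*)$) to verify $K_j^*K_j = K_jK_j^*$, $K_1K_2=0$, and $K_1^*K_1\le\alpha^2 I$. The computations match the paper's up to trivial rearrangements.
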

\begin{proof}
Normality of $T$ implies the normality of $W$. Since $|T| \in \overline{\mathcal{AN}(H)}$, by Theorem \ref{T positive} we have, $|T| = \alpha I - \tilde{K_1} + \tilde{K_2}$ where, $\tilde{K_1}, \tilde{K_2} \in \mathcal{K}(H)_+, \alpha \geq 0, \tilde{K_1} \tilde{K_2} = 0$ and $\tilde{K_1} \leq \alpha I$. Then $T = \alpha W -K_1 + K_2$ where, $K_1 = W \tilde{K_1}$ and $K_2 = W \tilde{K_2}$ are compact. We have $W|T| = |T|W$, by \cite[Proposition 6.4, Page 444]{Kubrusly}. Hence by following similar steps as in Theorem \ref{T selfadjoint} we get, $W\tilde{K_1} = \tilde{K_1}W$ and $W\tilde{K_2} = \tilde{K_2}W$. Now by taking adjoint on both sides we get, $W^*\tilde{K_1} = \tilde{K_1}W^*$ and $W^*\tilde{K_2} = \tilde{K_2}W^*$. Next we shall prove that $K_1, K_2$ are normal. Consider,
	\begin{equation*}
		K^*_1 K_1 = \tilde{K_1}W^*W\tilde{K_1} = \tilde{K_1}WW^*\tilde{K_1} = W\tilde{K_1}\tilde{K_1}W^* = K_1 K^*_1.
		\end{equation*}
		Hence, $K_1$ is normal. Similarly we can show that $K_2$ is also normal. Also,
\begin{equation*}
K_1 K_2 = W\tilde{K_1}W\tilde{K_2} = W\tilde{K_1}\tilde{K_2}W = 0.
\end{equation*}
		Finally $K^*_1 K_1 = \tilde{K_1}W^*W\tilde{K_1} \leq \|W\|^2 \tilde{K_1}^2 \leq \alpha^2 I$.
\end{proof}
		
\begin{remark}
 From Theorem \ref{T normal}, we have $K_1, K_2$ are normal and
 \begin{equation*}
 K^*_1K_2= \tilde{K_1}W^*W\tilde{K_2} = W^*\tilde{K_1}\tilde{K_2}W = 0
 \end{equation*}
 as $\tilde{K_1}\tilde{K_2} =0$. It is also noted that $K^*_2K_1 = 0$. Therefore we get, 	
\begin{equation*}
	(K_2 -K_1)^* (K_2 - K_1) = K^*_2 K_2 + K^*_1 K_1 = K_2 K^*_2 + K_1 K^*_1 = (K_2 - K_1)(K_2 -K_1)^*.
 \end{equation*}
 Hence $K_2 - K_1$ is normal.
\end{remark}
\begin{theorem}
	Let $T \in \overline{\mathcal{AN}(H)}$ be normal. Then there exists $(H_n, U_n)_{n \in \mathbb{N}}$, where $H_n$ is a reducing  subspace for $T$, $U_n \in \mathcal{B}(H_n)$ is a unitary operator and $\lambda_n \in \sigma(|T|)$ for all $n \in \mathbb{N}$ such that
\begin{enumerate}
\item $H = \displaystyle \bigoplus_{n\in \mathbb{N}}H_n$
\item $T = \displaystyle \bigoplus_{n \in \mathbb{N}}\lambda_n U_n$.
\end{enumerate}

\end{theorem}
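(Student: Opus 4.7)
The plan is to use the polar decomposition $T = W|T|$ together with the earlier structural results, reducing the problem to the spectral structure of $|T|$. Since $T$ is normal, Theorem \ref{T normal} gives that $W$ is normal and commutes with $|T|$ (via $W|T| = |T|W$), while $|T| \in \overline{\mathcal{AN}(H)}_{+}$. By Corollary \ref{diagonalization}, $|T|$ is diagonalizable, and by Theorem \ref{T ess} its essential spectrum is a single point, so $\sigma(|T|)$ is at most countable (the only possible accumulation point). Enumerate the distinct spectral values as $\{\lambda_n\}_{n \in \mathbb{N}}$ and set $H_n := N(|T| - \lambda_n I)$; diagonalizability yields $H = \bigoplus_{n \in \mathbb{N}} H_n$.

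Next I would check that each $H_n$ reduces $T$. Because $W$ commutes with $|T|$, it preserves each eigenspace $H_n$. Taking adjoints in $W|T| = |T|W$ and using $|T|^* = |T|$ gives $|T|W^* = W^*|T|$, so $W^*$ also preserves $H_n$. Hence $H_n$ reduces $W$, and since it visibly reduces $|T|$, it reduces $T = W|T|$.

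To identify $T|_{H_n}$, for $x \in H_n$ we have $Tx = W|T|x = \lambda_n Wx$, so $T|_{H_n} = \lambda_n (W|_{H_n})$. For $\lambda_n > 0$ the inclusion $H_n \subseteq \overline{R(|T|)} = N(|T|)^{\perp} = N(W)^{\perp}$ shows that $W$ acts isometrically on $H_n$; because $H_n$ is a reducing subspace and $W$ is normal on $H$, the restriction $U_n := W|_{H_n}$ is a normal isometry on $H_n$, hence unitary (since normality gives $U_n U_n^* = U_n^* U_n = I_{H_n}$). If $\lambda_n = 0$ occurs, then $H_n = N(T)$ and $T|_{H_n} = 0$, so any fixed unitary $U_n$ on $H_n$ satisfies $\lambda_n U_n = 0 = T|_{H_n}$. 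Assembling the pieces gives $T = \bigoplus_{n \in \mathbb{N}} \lambda_n U_n$ with $\lambda_n \in \sigma(|T|)$, as required.

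The main obstacle is the unitarity step for $\lambda_n > 0$. Without the normality of $W$ granted by Theorem \ref{T normal}, one would only obtain that $W|_{H_n}$ is an isometry of $H_n$ into itself, and surjectivity would not be automatic (particularly when $H_n$ is infinite-dimensional, which happens for the essential-spectrum eigenvalue $\alpha$). Normality of $W$ simultaneously promotes the invariance of $H_n$ under $W$ to reducing-ness (by giving the $W^*$-invariance) and the isometry $W|_{H_n}$ to a unitary; once this is in place, the direct-sum decomposition is forced by the diagonalization of $|T|$.
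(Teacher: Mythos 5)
Your argument is correct, and its core is the same as the paper's: pass to the polar decomposition $T=W|T|$, diagonalize $|T|$ (Corollary \ref{diagonalization} plus the singleton essential spectrum from Theorem \ref{T ess}), show each eigenspace $N(|T|-\lambda_n I)$ reduces $W$ via the commutation $W|T|=|T|W$, and identify $T|_{H_n}=\lambda_n W|_{H_n}$. The difference is in how unitarity of $W|_{H_n}$ is obtained and in the overall organization. The paper splits into three cases according to whether $[m(T),m_e(T))$ and $(m_e(T),\|T\|]$ contain finitely or infinitely many spectral points, delegating two of the cases to earlier structure theorems for normal $\mathcal{AN}$- and $\mathcal{AM}$-operators, and within the remaining case it gets unitarity from finite-dimensionality of the discrete eigenspaces, treating the possible infinite-multiplicity eigenvalue $m_e(T)$ separately via normality of $W$. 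You instead use normality of $W$ uniformly: the restriction of a normal operator to a reducing subspace is normal, and a normal isometry is unitary, which handles finite- and infinite-dimensional $H_n$ at once and removes the need for any case analysis. This is a genuine simplification; the only cosmetic point worth noting is that if some $\lambda_n\in\sigma(|T|)$ is not an eigenvalue (e.g.\ $m_e(T)$ when it is only an accumulation point), then $N(|T|-\lambda_n I)=\{0\}$ and that summand should simply be omitted from the enumeration.
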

\begin{proof}
	Let $T = U|T|$ be the polar decomposition of $T$.
Since $T \in \overline{\mathcal{AN}(H)}$, we have $|T| \in \overline{\mathcal{AN}(H)}$. By Theorem \ref{T ess}, we have $\sigma_{ess}(|T|)$ is singleton set, say $\{\lambda\}$. Then $\lambda = m_e(T)$. We shall consider the following cases.

\textbf{Case $(1)$} $m_e(T) = \|T\|$:\\
Since $\sigma(|T|)$ is countable, let $\sigma_{d}(|T|) = \{\lambda_1, \lambda_2,\lambda_3,\dots\}$ be such that $\{\lambda_i\}$ is increasing to $\|T\|$. Consider $H_i = N(|T|- \lambda_iI)$ for all $i \in \mathbb{N}$. As $|T|$ is diagonalizable, we have $H = \displaystyle{\bigoplus_{n \in \mathbb{N}}}
H_n$ and $|T| = \displaystyle{\bigoplus_{n \in \mathbb{N}}} \lambda_n I_n$ where, $I_n : H_n \to H_n$ is the identity operator with $\sigma(|T|) = \{\lambda_n\}_{n \in \mathbb{N}} \cup \{\lambda\}$.

We first show that each $H_i$ reduces $U$. If $x \in H_i$, then $|T|x-\lambda_ix =0$.  As $T$ is normal, we have $U|T| = |T|U$ by \cite[Proposition 6.4]{Kubrusly}. Now $U(|T|x-\lambda_ix) =0$ implies that $|T|Ux - \lambda_i Ux = 0$. Hence $Ux \in H_i$ for all $i \in \mathbb{N}$. Thus $H_i$ is invariant under $U$. Similarly, $U^*|T| = |T|U^*$ implies that $H_i$ is invariant under $U^*$. Thus $H_i$ reduces $U$ for all $i \in \mathbb{N}$. Since the restriction of partial isometry to the reducing subspace is a partial isometry \cite[Lemma 1]{HalmosWallen}, we get $T = \displaystyle{\bigoplus_{n \in \mathbb{N}}}\lambda_n U_n$ where $U_n : H_n \to H_n$ is a partial isometry.
Let $x \in N(|T|-\lambda_i I)$ with $\lambda_i \neq 0$ then, $|T|x = \lambda_i x$. For any $y \in N(U)$,
\begin{equation*}
\langle y,x \rangle = \langle y, \frac{|T|x}{\lambda_i} \rangle = \frac{1}{\lambda_i}(\langle y,|T|x \rangle) = 0,
\end{equation*}
as $|T|x\in R(|T|)\subseteq N(|T|)^{\bot}=N(U)^{\bot}$.
 Hence for $\lambda_i \neq 0$, we have $N(|T|- \lambda_i I) \subset N(U)^\perp$. Hence $U_n : H_n \to H_n$ is an isometry. Since $\lambda_n \in \sigma_{d}(T)$, each $H_n$ is finite dimensional. Thus $U_n$ is a unitary operator.
 If $\lambda_{n_0}=0$ for some $n_0\in \mathbb {N}$, then clearly $H_{n_0}=N(T)$ and $U_{n_0}=0$. In this case also, normality of $T$ imply that $H_{n_0}$ is reducing subspace for $T$.

If $m_e(T) = \lambda$ is an eigenvalue with infinite multiplicity, then $U|_{N(|T|- \lambda I)} = U_{\lambda}$ is an isometry on $N(|T|- \lambda I)$. Since normality of $T$ implies $U$ is normal, hence $U_\lambda$ is unitary. In this case, we get $H = \displaystyle{\bigoplus_{n \in \mathbb{N}}}H_n \bigoplus H_\lambda$ and $T =  \displaystyle{\bigoplus_{n \in \mathbb{N}}}\lambda_n U_n \bigoplus \lambda U_\lambda$.

 \textbf{Case $(2)$} $m_e(T) = m(T)$:\\
   In this case, $[m(T),m_e(T))= \{m_e(T)\}$. By \cite[Theorem 2.4]{Rameshpara} $|T| \in \mathcal{AN}(H)$, hence $T \in \mathcal{AN}(H)$. So by using  \cite[Theorem 3.9]{Rameshpara} we get $H = \displaystyle{\bigoplus_{n \in \mathbb{N}}} H_n$ and $ T = \displaystyle{\bigoplus_{n \in \mathbb{N}}}\lambda_n U_n$ where, $H_n$ reduces $T$, $U_n \in \mathcal{B}(H_n)$ is a unitary operator and $\lambda_n \in \sigma(|T|)$.\\

 \textbf{Case $(3)$} $m(T) < m_e (T) < \|T\|$:\\
If $[m(T), m_e(T))$ contains only finitely many spectral points then, by \cite[Theorem 2.4]{Rameshpara} $T \in \mathcal{AN}(H)$. Hence the proof follows from Case $(2)$.

 If $(m_e(T), \|T\|]$ contains only finitely many spectral points then, by \cite[Theorem 3.10]{NeeruRamesh} $T \in \mathcal{AM}(H)$. Hence by \cite[Theorem 4.4]{NeeruRamesh} we get the required result.

 Next we assume that  both $[m(T), m_e(T))$ and $(m_e(T), \|T\|]$ contains infinitely many eigenvalues of $|T|$. Let $[m(T), m_e(T)) \cap \sigma(|T|) = \{\lambda_1, \lambda_2,\lambda_3,\dots\}$ and $(m_e(T), \|T\|] \cap \sigma(|T|) = \{\mu_1, \mu_2, \mu_3,\dots\}$, where $\{\lambda_i\}$ increases to $m_e(T)$ and $\{\mu_i\}$ decreases to $m_e(T)$.

 Let $H_i = N(|T|- \lambda_i I)$ and $\tilde{H_j} = N(|T|- \mu_jI)$ for all $i,j \in \mathbb{N}$. Then clearly $H = \displaystyle{\bigoplus_{i \in \mathbb{N}}}N(|T|- \lambda_i I) \displaystyle{\bigoplus_{j \in \mathbb{N}}} N(|T|- \mu_j \tilde{I})$ and $|T| = \displaystyle{\bigoplus_{i \in \mathbb{N}}}\lambda_i I_i \displaystyle{\bigoplus_{j \in \mathbb{N}}} \mu_j \tilde{I_j}$ where, $I_i : H_i \to H_i$ and $\tilde{I_j} : \tilde{H_j} \to \tilde{H_j}$ are identity operators with $\sigma(|T|) = \{\lambda_i\}_{i \in \mathbb{N}} \cup \{\mu_j\}_{j \in \mathbb{N}} \cup \{m_e(T)\}$. Since $N(|T|- \lambda_iI)$ and $N(|T|- \mu_j I)$ reduces $U$ for all $i,j \in \mathbb{N}$ we get,
 \begin{equation*}
 T = \bigoplus_{i \in \mathbb{N}}\lambda_iU_i \bigoplus_{j \in \mathbb{N}}\mu_j \tilde{U_j}.
 \end{equation*}
 As $N(|T|-\lambda_i I) \subset N(U)^\perp$ and $ N(|T|- \mu_j I) \subset N(U)^\perp$, whenever $\lambda_i\neq 0$ and $\mu_j\neq 0$, we get $U_i : H_i \to H_i$ and $\tilde{U_j} : \tilde{H_j} \to \tilde{H_j}$ are unitary operators. If one of $\lambda_i$ or $\mu_j$ is zero, then the corresponding eigenspace is nothing but $N(T)$, which reduces $T$ as $T$ is normal.

 If $m_e(T) = \lambda$ is an eigenvalue with infinite multiplicity, then $U|_{N(|T|- \lambda I)} = U_{\lambda}$ is unitary on $N(|T|- \lambda I)$, as $T$ is normal. In this case, we get
   $H = \displaystyle{\bigoplus_{i \in \mathbb{N}}}N(|T|- \lambda_i I) \displaystyle{\bigoplus_{j \in \mathbb{N}}} N(|T|- \mu_j \tilde{I}) \bigoplus N(|T| - \lambda I)$ and
  $ T = \bigoplus_{i \in \mathbb{N}}\lambda_iU_i \bigoplus_{j \in \mathbb{N}}\mu_j \tilde{U_j} \bigoplus \lambda U_\lambda$.

\end{proof}

\section{Closure of $\mathcal{AM}$-operators}
It is well known that the set of all $\mathcal{AM}$-operators is distinct from that of $\mathcal{AN}$-operators, as the set of all $\mathcal{AN}$-operators contains the algebra of compact operators but the only compact operators  that are minimum attaining are the non-injective ones \cite[Proposition 1.3]{CarvajalNevesAM}. In this section, we shall show that the operator norm closure of these two sets of operators are the same.

We first recall the structure theorem for positive $\mathcal{AM}$-operators.
\begin{theorem}\cite[Theorem 5.8]{GRS} \label{positive AM}
	Let $H$ be an infinite dimensional Hilbert space and $T \in \mathcal{B}(H)$. Then the following are equivalent:
	\begin{enumerate}
	\item  $T \in \mathcal{AM}(H)_+$.
	\item  There exists a decomposition for $T$ of the form $T = \alpha I - K + F$ where $K\in \mathcal{K}(H)_{+}$  with $\|K\| \leq \alpha$ and $F\in \mathcal{F}(H)_{+}$  satisfying $KF = F K = 0$. Moreover, this decomposition is	unique.
	\end{enumerate}
\end{theorem}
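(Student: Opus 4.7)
The plan is to prove the two implications and uniqueness separately, relying heavily on the spectral theorem for positive operators together with the known characterization of self-adjoint $\mathcal{AM}$-operators via the essential spectrum (the analogue of Theorem \ref{T ess} on the other side of $m_e(T)$, due to NeeruRamesh).

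For (2) $\implies$ (1): Given the decomposition, positivity of $T$ is immediate since $\alpha I - K \geq 0$ (because $\|K\| \leq \alpha$) and $F \geq 0$. The conditions $KF = FK = 0$ together with positivity of $K, F$ force $R(K) \perp R(F)$, so $H$ splits orthogonally as $R(F)\oplus \overline{R(K)}\oplus(N(K)\cap N(F))$, and $T$ acts as $(\alpha + F)|_{R(F)}$, as $(\alpha I - K)|_{\overline{R(K)}}$, and as $\alpha I$ on the remaining summand. From this, $m_e(T)=\alpha$ and $\sigma(T)\cap(\alpha,\|T\|]=\sigma(\alpha I+F|_{R(F)})\cap(\alpha,\|T\|]$ is a finite set of eigenvalues of finite multiplicity. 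The standard spectral characterization of positive $\mathcal{AM}$-operators then yields $T\in\mathcal{AM}(H)$.

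For (1) $\implies$ (2): Assume $T\in\mathcal{AM}(H)_{+}$. First show $\sigma_{ess}(T)$ is a singleton $\{\alpha\}$: if two points $\alpha_1<\alpha_2$ both lay in $\sigma_{ess}(T)$, one could pick orthonormal approximate eigenvectors near both and build a closed subspace on which $m(T|_{M})$ is not attained, contradicting $T\in\mathcal{AM}(H)$. Next show $\sigma(T)\cap(\alpha,\|T\|]$ is finite, each point being an eigenvalue of finite multiplicity: otherwise one finds either an eigenvalue of infinite multiplicity above $\alpha$ or a sequence of eigenvalues $\mu_n\downarrow\beta\geq\alpha$, and in either case the subspace spanned by the corresponding eigenvectors violates minimum-attainment. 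Let $\mu_1,\dots,\mu_n$ be the eigenvalues of $T$ in $(\alpha,\|T\|]$ and set
\begin{equation*}
F:=\sum_{i=1}^{n}(\mu_i-\alpha)P_{N(T-\mu_iI)},\qquad K:=\alpha I-T+F.
\end{equation*}
Then $F\in\mathcal{F}(H)_{+}$, and by diagonalization of $T$ (which holds since $T$ is positive and, being AM, is a compact perturbation of a scalar multiple of the identity on the complement of the eigenspaces above $\alpha$) the operator $K$ is positive with $\sigma(K)\subseteq[0,\alpha]$. Compactness of $K$ follows from $\sigma_{ess}(K)=\sigma_{ess}(\alpha I-T)=\{0\}$ together with self-adjointness. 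Finally the mutual orthogonality of the eigenspaces above and below $\alpha$ gives $KF=FK=0$.

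For uniqueness, if $T=\alpha I-K+F=\alpha'I-K'+F'$ with both triples satisfying the stated conditions, Weyl's theorem on essential spectrum forces $\sigma_{ess}(T)=\{\alpha\}=\{\alpha'\}$, hence $\alpha=\alpha'$. Since $R(F)$ equals the spectral subspace of $T$ corresponding to $(\alpha,\infty)$, $F$ is determined by $T$; then $K=\alpha I-T+F$ is also determined. The main obstacle I expect in carrying this out rigorously is the step showing that AM on every subspace forces $\sigma_{ess}(T)=\{\alpha\}$ and finiteness of $\sigma(T)\cap(\alpha,\|T\|]$; this requires a careful construction of bad subspaces using Weyl sequences whenever the spectral structure fails, which is the technical heart of the argument (and is the place where the AM vs.\ AN asymmetry enters, reversing the role of ``above'' and ``below'' $m_e(T)$ compared with Theorem \ref{T ess}).
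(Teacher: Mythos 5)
First, a point of comparison: the paper does not prove this statement at all --- it is imported verbatim as \cite[Theorem 5.8]{GRS}, so there is no in-paper argument to measure your proposal against. Judged on its own, your outline has the right skeleton, and your uniqueness argument is essentially complete and correct: Weyl's theorem pins down $\alpha$ as the unique essential spectral point, and the conditions $K,F\geq 0$, $KF=FK=0$, $\|K\|\leq\alpha$ force $F=(T-\alpha I)^{+}$ and $K=(T-\alpha I)^{-}$ (this is exactly how the paper later restates the result in Theorem \ref{newcharAMoperators}). The identification of $R(F)$ with the spectral subspace for $(\alpha,\infty)$ and the deduction of compactness of $K$ from $\sigma_{ess}(K)=\{0\}$ plus self-adjointness are both sound.

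The genuine gap is that both implications are ultimately off-loaded onto ``the standard spectral characterization of positive $\mathcal{AM}$-operators'' (the statement that $T\in\mathcal{AM}(H)_{+}$ iff $(m_e(T),\|T\|]$ contains only finitely many spectral points, each an eigenvalue of finite multiplicity). That characterization is not more elementary than the theorem you are proving --- in the literature it is essentially equivalent to it, and the version you would cite (\cite[Theorem 3.10]{NeeruRamesh}) postdates and rests on \cite[Theorem 5.8]{GRS}, so invoking it is circular unless you prove it independently. You correctly identify the missing content: given $T\in\mathcal{AM}(H)_{+}$, one must exhibit a closed subspace $M$ on which $T|_{M}$ fails to attain its minimum whenever $\sigma_{ess}(T)$ has two points, or an eigenvalue of infinite multiplicity sits above $m_e(T)$, or eigenvalues accumulate from above. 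But you only assert that such an $M$ exists. This construction is delicate: the natural candidate $M=\overline{\operatorname{span}}\{\sqrt{1-t_n^2}\,u_n+t_nv_n\}$ built from approximate eigenvectors $u_n,v_n$ at two spectral levels requires controlling $\|Tx\|$ for arbitrary infinite linear combinations, not just for the generating vectors, which in turn requires choosing the $u_n$ inside mutually orthogonal spectral subspaces and estimating cross terms. This is several pages of work in \cite{GRS} and is precisely the mathematical substance of the theorem; without it the proposal is an accurate road map rather than a proof. A secondary, fixable blemish: in (1) $\implies$ (2) you justify diagonalizability of $T$ by saying it is ``being AM, a compact perturbation of a scalar multiple of the identity,'' which presupposes the conclusion; the correct justification is that once the spectral picture is established, $\sigma(T)$ is countable and a self-adjoint operator with countable spectrum is diagonalizable.
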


The following result can be proved easily.
\begin{theorem}\label{newcharAMoperators}
Let $H$ be an infinite dimensional Hilbert space and $T\in \mathcal{B}(H)_{+}$. Then
$T\in \mathcal{AM}(H)_{+}$  if and only if there exists $\beta \geq 0$ such that $(T-\beta I)^{+}\in \mathcal{F}(H)_{+}$, $(T-\beta I)^{-}\in \mathcal{K}(H)_{+}$ and $\|(T-\beta I)^{-}\|\leq \beta$.
\end{theorem}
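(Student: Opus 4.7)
The plan is to mirror the argument given for the analogous $\mathcal{AN}$-characterization, swapping the roles of the compact and finite-rank parts, and invoking Theorem \ref{positive AM} (the unique decomposition theorem for positive $\mathcal{AM}$-operators) in place of Theorem \ref{PandeypositiveAN}. The crucial algebraic ingredient is again that if $K$ and $F$ have zero product, the modulus of $F-K$ collapses nicely to $F+K$, letting us read off the positive and negative parts explicitly.

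For the forward direction, assume $T\in \mathcal{AM}(H)_{+}$. By Theorem \ref{positive AM} we may write $T=\beta I - K + F$ uniquely, with $K\in \mathcal{K}(H)_{+}$, $\|K\|\leq \beta$, $F\in \mathcal{F}(H)_{+}$, and $KF=FK=0$. Set $T_{\beta}:=T-\beta I = F-K$, which is self-adjoint. Using $KF=FK=0$, compute
\begin{equation*}
|T_{\beta}|^{2}=T_{\beta}^{2}=(F-K)^{2}=F^{2}+K^{2}=(F+K)^{2}.
\end{equation*}
Since $F+K\geq 0$, uniqueness of the positive square root yields $|T_{\beta}|=F+K$. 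Hence, via Definition \ref{self-adjointdecomposition},
\begin{equation*}
(T-\beta I)^{+}=\tfrac{|T_{\beta}|+T_{\beta}}{2}=F\in \mathcal{F}(H)_{+},\qquad (T-\beta I)^{-}=\tfrac{|T_{\beta}|-T_{\beta}}{2}=K\in \mathcal{K}(H)_{+},
\end{equation*}
and $\|(T-\beta I)^{-}\|=\|K\|\leq \beta$ by hypothesis.

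For the converse, suppose $\beta\geq 0$ is such that $F:=(T-\beta I)^{+}\in \mathcal{F}(H)_{+}$, $K:=(T-\beta I)^{-}\in \mathcal{K}(H)_{+}$ and $\|K\|\leq \beta$. By Definition \ref{self-adjointdecomposition} we immediately have $T-\beta I=F-K$, so $T=\beta I - K + F$, and $FK=KF=0$ is built into the $(\cdot)^{+},(\cdot)^{-}$ construction. Applying Theorem \ref{positive AM} with these data gives $T\in \mathcal{AM}(H)_{+}$.

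There is no serious obstacle: the whole argument is essentially bookkeeping around the identity $|F-K|=F+K$ when $FK=0$, plus a direct appeal to the structure theorem for positive $\mathcal{AM}$-operators. The only point requiring a touch of care is to note that in the $\mathcal{AM}$ setting Theorem \ref{positive AM} demands both $KF=0$ and $FK=0$ (rather than the single relation used in the $\mathcal{AN}$ case), but this symmetry is automatic for the canonical positive/negative parts of a self-adjoint operator, so the translation between the two formulations is genuinely trivial.
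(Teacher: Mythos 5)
Your proof is correct, and it is exactly the argument the paper has in mind: the paper omits the proof of this statement (``can be proved easily'') precisely because it is the mirror image of its own proof of the analogous $\mathcal{AN}$-characterization, with Theorem \ref{positive AM} replacing Theorem \ref{PandeypositiveAN} and the roles of $\mathcal{K}(H)_+$ and $\mathcal{F}(H)_+$ swapped. The key identity $|F-K|=F+K$ when $KF=FK=0$, and the observation that $T^{+}T^{-}=T^{-}T^{+}=0$ holds automatically for the canonical decomposition, are handled correctly.
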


\begin{prop} \label{AM+finiterank}
	Let $T \in \mathcal{AM}(H_1, H_2)$ and $F \in \mathcal{F}(H_1, H_2)$. Then $T + F \in \mathcal{AM}(H_1, H_2)$.
\end{prop}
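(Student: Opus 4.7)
The plan is to mirror the strategy of Proposition \ref{AN+Finiterank} by passing from $A := T + F$ to $A^{*}A$ and applying the structure theorem for positive $\mathcal{AM}$-operators. A direct computation gives $A^{*}A = T^{*}T + F_1$, where $F_1 := T^{*}F + F^{*}T + F^{*}F \in \mathcal{F}(H_1)$ is self-adjoint. Invoking the $\mathcal{AM}$-analogue of Theorem \ref{T*T AN}, namely the equivalence $T \in \mathcal{AM}(H_1, H_2) \Longleftrightarrow T^{*}T \in \mathcal{AM}(H_1)_{+}$ available in the cited $\mathcal{AM}$-literature (e.g.\ \cite{GRS}), reduces the problem to showing $A^{*}A \in \mathcal{AM}(H_1)_{+}$.

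By Theorem \ref{positive AM} applied to $T^{*}T$ we obtain a decomposition $T^{*}T = \alpha I - K + F_2$ with $\alpha \geq 0$, $K \in \mathcal{K}(H_1)_{+}$, $\|K\| \leq \alpha$, $F_2 \in \mathcal{F}(H_1)_{+}$, and $KF_2 = F_2 K = 0$. Consequently
\begin{equation*}
A^{*}A - \alpha I \;=\; -K + \widetilde{F}, \qquad \widetilde{F} \;:=\; F_2 + F_1 \in \mathcal{F}(H_1)\ \text{self-adjoint.}
\end{equation*}
Because $\widetilde{F}$ need be neither positive nor orthogonal to $K$, Theorem \ref{positive AM} does not apply to $A^{*}A$ directly; instead I would verify the hypotheses of Theorem \ref{newcharAMoperators} using the same scalar $\alpha$.

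Three checks are required. First, $(A^{*}A - \alpha I)^{-} \in \mathcal{K}(H_1)_{+}$ is immediate, since $-K + \widetilde{F}$ is compact and self-adjoint. Second, the positivity $A^{*}A \geq 0$ forces $A^{*}A - \alpha I \geq -\alpha I$, so $\sigma(A^{*}A - \alpha I) \subseteq [-\alpha, \infty)$ and the spectral theorem yields $\|(A^{*}A - \alpha I)^{-}\| \leq \alpha$. The main obstacle is the third condition, $(A^{*}A - \alpha I)^{+} \in \mathcal{F}(H_1)_{+}$, which I would handle via a min-max argument: setting $r := \dim R(\widetilde{F}) < \infty$, the closed subspace $N(\widetilde{F})$ has codimension $r$ and $A^{*}A - \alpha I$ acts on it as $-K \leq 0$; for any $(r+1)$-dimensional $V \subseteq H_1$, dimension counting gives $V \cap N(\widetilde{F}) \neq \{0\}$, so some unit vector $x$ in this intersection satisfies $\langle (A^{*}A - \alpha I)x, x\rangle = -\langle Kx, x\rangle \leq 0$. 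By the min-max principle for self-adjoint compact operators, the $(r+1)$-th positive eigenvalue of $A^{*}A - \alpha I$ is $\leq 0$, so at most $r$ positive eigenvalues occur, making $(A^{*}A - \alpha I)^{+}$ of finite rank. Theorem \ref{newcharAMoperators} then gives $A^{*}A \in \mathcal{AM}(H_1)_{+}$, and the $\mathcal{AM}$-analogue of Theorem \ref{T*T AN} finishes the proof.
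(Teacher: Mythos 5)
Your argument is correct, and it coincides with the paper's proof up to the point where $A^{*}A=\alpha I-K+(F_2+F_1)$ is obtained: both routes pass to $A^{*}A$ via the equivalence $T\in\mathcal{AM}(H_1,H_2)\Leftrightarrow T^{*}T\in\mathcal{AM}(H_1)$ (\cite[Theorem 5.14]{GRS}) and decompose $T^{*}T$ by Theorem \ref{positive AM}. The divergence is in the final step. The paper simply cites \cite[Theorem 5.6]{GRS}, a characterization of positive $\mathcal{AM}$-operators of the form $\alpha I-K+F$ in which $F$ is allowed to be an \emph{arbitrary} self-adjoint finite-rank operator, so the displayed decomposition of $A^{*}A$ qualifies as it stands. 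You instead verify the hypotheses of Theorem \ref{newcharAMoperators} with the same scalar $\alpha$. Your three checks are all sound: compactness of $(A^{*}A-\alpha I)^{-}$ is clear, the bound $\|(A^{*}A-\alpha I)^{-}\|\leq\alpha$ follows from $A^{*}A\geq 0$ as you say, and the min--max count for $(A^{*}A-\alpha I)^{+}$ works because the quadratic form of $-K+\widetilde F$ is $\leq 0$ on the finite-codimension subspace $N(\widetilde F)$, so at most $\dim R(\widetilde F)$ positive eigenvalues can occur. In effect you re-prove the relevant direction of \cite[Theorem 5.6]{GRS} starting from Theorem \ref{newcharAMoperators}; this costs some spectral bookkeeping but makes the proposition self-contained within results the paper explicitly states, whereas the paper's route is shorter at the price of leaning on an external characterization it never records.
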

\begin{proof}
	Let $A = T + F$. Consider $A^*A = (T^*+ F^*)(T +F) = T^*T + \tilde{F}$ where, $\tilde{F} = T^*F + F^*T + F^*F \in \mathcal{F}(H_1)$ and ${\tilde{F}}^* = \tilde{F}$. By using \cite[Theorem 5.14]{GRS} and Theorem \ref{positive AM}, we have $T^*T = \alpha I - K_1 +F_1$ where, $\alpha \geq 0, K_1 \in \mathcal{K}(H_1)_+, F_1 \in \mathcal{F}(H_1)_+, \|K_1\| \leq \alpha$ and $K_1F_1 = 0$. Now $A^*A = \alpha I - K_1 + F_2 $ where, $F_2 = F_1 + \tilde{F} \in \mathcal{F}(H_1)$ and $F^*_2 = F_2$. Thus $A^*A \in \mathcal{AM}(H_1)$ \cite[Theorem 5.6]{GRS}. Hence by \cite[Theorem 5.14]{GRS}, we get $A \in \mathcal{AM}(H_1, H_2)$.
	\end{proof}
\begin{corollary}\label{AM+compact}
	Let $T \in \mathcal{AM}(H_1, H_2)$ and $K \in \mathcal{K}(H_1, H_2)$. Then $T + K \in \overline{\mathcal{AM}(H_1, H_2)}$.
\end{corollary}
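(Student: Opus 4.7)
The approach mirrors the proof of Proposition \ref{AN+compact}, just with the $\mathcal{AM}$ analogue of the finite rank stability in place of the $\mathcal{AN}$ one. Since $K \in \mathcal{K}(H_1, H_2)$, the classical approximation theorem for compact operators on Hilbert spaces gives a sequence $\{F_n\} \subset \mathcal{F}(H_1, H_2)$ with $F_n \to K$ in operator norm. Then the continuity of addition in operator norm yields
\begin{equation*}
T + K = \lim_{n \to \infty} (T + F_n).
\end{equation*}

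Next I would apply Proposition \ref{AM+finiterank} to each term of the sequence: since $T \in \mathcal{AM}(H_1, H_2)$ and $F_n \in \mathcal{F}(H_1, H_2)$, we get $T + F_n \in \mathcal{AM}(H_1, H_2)$ for every $n \in \mathbb{N}$. Hence $T + K$ is the operator-norm limit of a sequence of $\mathcal{AM}$-operators, so $T + K \in \overline{\mathcal{AM}(H_1, H_2)}$.

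There is essentially no obstacle here, since the heavy lifting has already been done in Proposition \ref{AM+finiterank} via the characterization $A \in \mathcal{AM}(H_1,H_2)$ iff $A^*A \in \mathcal{AM}(H_1)$ and the structure theorem for positive $\mathcal{AM}$-operators. The present corollary is just the routine passage from finite rank perturbations to compact perturbations by density of $\mathcal{F}(H_1, H_2)$ in $\mathcal{K}(H_1, H_2)$.
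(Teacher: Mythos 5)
Your proof is correct and follows exactly the same route as the paper: approximate $K$ by finite rank operators $F_n$, invoke Proposition \ref{AM+finiterank} to get $T+F_n \in \mathcal{AM}(H_1,H_2)$, and pass to the operator-norm limit. Nothing further is needed.
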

\begin{proof}
	Since $K \in \mathcal{K}(H_1, H_2)$, there exists a sequence of finite rank operators say $\{F_n\}$ such that $F_n \to K$ in the operator norm. So $T + F_n \to T+K$ in the operator norm. As $T + F_n \in \mathcal{AM}(H_1, H_2)$ [Proposition \ref{AM+finiterank}] for all $n \in \mathbb{N}$, we get $T+ K \in \overline{\mathcal{AM}(H_1, H_2)}$.
\end{proof}

\begin{corollary}\label{parisoAM+com}
If $W\in \mathcal{AM}(H_1,H_2)$ is a partial isometry, $K\in \mathcal{K}(H_1,H_2)$ and $\beta\geq 0$, then $\beta W+K\in \overline{\mathcal{AM}(H_1,H_2)}$.
\end{corollary}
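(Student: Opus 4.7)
The plan is to reduce this to Corollary \ref{AM+compact} by first verifying that $\beta W$ lies in $\mathcal{AM}(H_1,H_2)$. Once that is in hand, a direct application of Corollary \ref{AM+compact} with $T=\beta W$ finishes the argument.

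To show $\beta W \in \mathcal{AM}(H_1,H_2)$ when $\beta \geq 0$ and $W \in \mathcal{AM}(H_1,H_2)$, I would argue directly from the definition. For any closed subspace $M \subseteq H_1$, the minimum modulus scales as $m(\beta W|_M) = \beta \, m(W|_M)$. Since $W \in \mathcal{AM}(H_1,H_2)$, there exists a unit vector $x_0 \in M$ with $\|W x_0\| = m(W|_M)$, and then $\|\beta W x_0\| = \beta \|W x_0\| = \beta \, m(W|_M) = m(\beta W|_M)$, so $\beta W|_M$ attains its minimum modulus on $M$. Since this holds for every closed subspace, $\beta W \in \mathcal{AM}(H_1,H_2)$. (The case $\beta = 0$ is trivial, as the zero operator is minimum attaining on every closed subspace.)

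With $\beta W \in \mathcal{AM}(H_1,H_2)$ established and $K \in \mathcal{K}(H_1,H_2)$, Corollary \ref{AM+compact} gives $\beta W + K \in \overline{\mathcal{AM}(H_1,H_2)}$, completing the proof.

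There is no real obstacle here; the only thing to be mindful of is making the scalar-multiplication step for $\mathcal{AM}$-operators explicit, since unlike the $\mathcal{AN}$-analogue (Corollary \ref{partialisometry+finiterank}), we cannot rely on an existing proposition to immediately absorb the scaling. Everything else is a direct invocation of Corollary \ref{AM+compact}.
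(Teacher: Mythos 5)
Your proposal is correct and matches the paper's intended argument: the paper states this corollary without proof immediately after Corollary \ref{AM+compact}, the implicit reasoning being exactly your reduction, namely that a nonnegative scalar multiple of an $\mathcal{AM}$-operator is again an $\mathcal{AM}$-operator (by the scaling $m(\beta W|_M)=\beta\, m(W|_M)$ and the same witness vector), after which Corollary \ref{AM+compact} applies. Your explicit verification of the scaling step is a welcome addition rather than a deviation.
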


\begin{theorem}\label{AM closure}
	Let $T \in \overline{\mathcal{AM}(H_1, H_2)}$ and $T = W|T|$ be the polar decomposition of $T$. Then $T = \beta W + K$, for some $K \in \mathcal{K}(H_1, H_2)$ and $\beta \geq 0$.
\end{theorem}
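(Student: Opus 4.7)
The plan is to mirror the proof of Theorem~\ref{general AN-closure structure}, substituting the structure theorem for positive $\mathcal{AM}$-operators (Theorem~\ref{positive AM}) for the corresponding theorem for positive $\mathcal{AN}$-operators. The key idea is to transfer the norm approximation $T_n\to T$ through the continuous map $T\mapsto|T|$, exploit the fact that each $|T_n|$ differs from a scalar multiple of the identity by a compact operator, pass to the limit, and finally recover $T$ by multiplying back by the partial isometry $W$ of its polar decomposition.

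Concretely, I would first pick a sequence $\{T_n\}\subset\mathcal{AM}(H_1,H_2)$ with $T_n\to T$ in operator norm, and invoke norm continuity of the modulus (as used in the proof of Theorem~\ref{general AN-closure structure} via \cite[Problem 15, Page 217]{ReedSimon1}) to obtain $|T_n|\to|T|$. Since $T_n\in\mathcal{AM}(H_1,H_2)$, one has $|T_n|\in\mathcal{AM}(H_1)_+$. Applying Theorem~\ref{positive AM}, write
\[
|T_n|=\alpha_n I-K_n+F_n,\qquad \alpha_n\geq 0,\ K_n\in\mathcal{K}(H_1)_+,\ F_n\in\mathcal{F}(H_1)_+,
\]
with $\|K_n\|\leq\alpha_n$ and $K_nF_n=F_nK_n=0$. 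Because $-K_n+F_n$ is compact, Weyl's theorem on the essential spectrum gives $\sigma_{ess}(|T_n|)=\{\alpha_n\}$, so $\alpha_n\leq\||T_n|\|=\|T_n\|$ and thus $\{\alpha_n\}$ is bounded. Passing to a subsequence, $\alpha_{n_k}\to\beta$ for some $\beta\geq 0$. Then $|T_{n_k}|-\alpha_{n_k}I=-K_{n_k}+F_{n_k}$ is compact for every $k$ and converges in norm to $|T|-\beta I$; since $\mathcal{K}(H_1)$ is norm closed, $|T|=\beta I+K_1$ for some $K_1\in\mathcal{K}(H_1)$. Multiplying on the left by $W$ yields $T=W|T|=\beta W+WK_1$, which is the required decomposition with $K:=WK_1\in\mathcal{K}(H_1,H_2)$.

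The step that I expect to require the most care is justifying $|T_n|\in\mathcal{AM}(H_1)_+$. For $\mathcal{AN}$-operators the analogous passage is clean via \cite[Corollary 2.10, Corollary 2.11]{venkuramesh}, but for $\mathcal{AM}$-operators one must combine \cite[Theorem 5.14]{GRS} (which gives $T_n^*T_n\in\mathcal{AM}(H_1)$) with a positive square-root preservation result from \cite{GRS}, analogous to the tool used in the converse half of Theorem~\ref{T*T in AN-closure}. Once this is secured, the remaining ingredients -- boundedness of $\{\alpha_n\}$, norm-closedness of the compact operators, and the multiplication by $W$ -- are routine, so the entire argument runs completely in parallel to the proof of Theorem~\ref{general AN-closure structure}.
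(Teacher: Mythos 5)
Your proposal is correct and follows essentially the same route as the paper's own proof: approximate $T$ by $\mathcal{AM}$-operators $T_n$, pass to the moduli $|T_n|\to|T|$, apply the structure theorem for positive $\mathcal{AM}$-operators, extract a convergent subsequence of the scalars $\beta_n$, use norm-closedness of the compacts to get $|T|=\beta I+K_1$, and multiply by $W$. The only difference is that you explicitly flag and justify the step $|T_n|\in\mathcal{AM}(H_1)_+$ (via \cite[Theorem 5.14]{GRS} and square-root preservation), which the paper asserts without comment.
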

\begin{proof}
	As $T \in \overline{\mathcal{AM}(H_1, H_2)}$, there exists a sequence of $\mathcal{AM}$-operators, $\{T_n\}$ such that $T_n \to T$ in the operator norm. Then $|T_n| \to |T|$ as $n \to \infty$ by \cite[Problem 15, Page 217]{ReedSimon1}. As $T_n \in \mathcal{AM}(H_1, H_2)$ we have $|T_n| \in \mathcal{AM}(H_1)$, from Theorem \ref{positive AM}, we get $|T_n|= \beta_n I - K_n + F_n$, where $\beta_n \geq 0, \ K_n \in \mathcal{K}(H_1)_+, F_n \in \mathcal{F}(H_1)_+,$ with $K_nF_n = 0, \|K_n\| \leq \beta_n $ for all $n \in \mathbb{N}$. As $\beta_n\leq \||T_n|\|=\|T_n\|<\sup_{n}\|T_n\|<\infty$,  $\{\beta_n\}$ is bounded. Hence there exists a subsequence say, $\{\beta_{n_k}\}$ such that $\beta_{n_k} \to \beta$ as $k \to \infty$ for some $\beta \geq 0.$ Then $K_{n_k} + F_{n_k} \to K_1$, for some $K_1 \in \mathcal{K}(H_1)$. Hence $|T_{n_k}| \to \beta I + K_1$ as $k \to \infty$. This implies $|T| = \beta I + K_1$. Thus $T = \beta W + K$, where $K = W K_1 \in \mathcal{K}(H_1, H_2)$.
\end{proof}

\begin{corollary}\label{AMprop}
	Let $T\in \overline{\mathcal{AM}(H_1,H_2)}$ and $T\notin \mathcal{K}(H_1,H_2)$ . Then
	\begin{enumerate}
		\item  \label{AMfdmlkernel}$N(T)$ is finite dimensional
		\item  $R(T)$ is closed
		\item  $T$  is left semi-Fredholm operator.
	\end{enumerate}
\end{corollary}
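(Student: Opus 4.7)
The plan is to mirror, almost verbatim, the proof of Corollary \ref{properties2}, replacing its appeal to Theorem \ref{general AN-closure structure} by the analogous structural result for the $\mathcal{AM}$-closure, namely Theorem \ref{AM closure}. That theorem provides us with the polar decomposition $T=W|T|$ together with a representation
\begin{equation*}
|T|=\beta I+K_{1},\qquad \beta\geq 0,\ K_{1}\in\mathcal{K}(H_{1}),
\end{equation*}
which is exactly the same shape of identity as Equation~\eqref{propeq1} that drove the proof of Corollary~\ref{properties2}.

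The first step is to eliminate the case $\beta=0$. If $\beta=0$, then $|T|=K_{1}$ is compact, and then $T=W|T|$ is compact, contradicting the hypothesis $T\notin \mathcal{K}(H_{1},H_{2})$. So we may assume $\beta>0$ throughout. For (1), I would take any $x\in N(T)$, use $N(T)=N(|T|)$ to write $\beta x=-K_{1}x$, and conclude that the restriction of $\beta I$ to $N(|T|)$ coincides with the compact operator $-K_{1}|_{N(|T|)}$; since $\beta>0$, this forces $N(|T|)$, and hence $N(T)$, to be finite dimensional.

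For (2), the identity $|T|=\beta I+K_{1}$ with $\beta>0$ presents $|T|$ as a compact perturbation of the invertible operator $\beta I$, so $|T|$ is Fredholm and in particular has closed range. Since $W$ acts isometrically on $N(|T|)^{\perp}\supseteq R(|T|)$, the image $R(T)=W(R(|T|))$ is again closed. For (3), the same identity rearranges as
\begin{equation*}
\left(\tfrac{1}{\beta}W^{*}\right)T=\tfrac{1}{\beta}W^{*}W|T|\cdot\ldots
\end{equation*}
— more directly, multiplying $T=W|T|$ on the left by $W^{*}$ gives $W^{*}T=|T|=\beta I+K_{1}$, so with $A:=\beta^{-1}W^{*}\in \mathcal{B}(H_{2},H_{1})$ we obtain $AT=I+\beta^{-1}K_{1}$, which is exactly the left semi-Fredholm condition of Definition~\ref{defFredholm}.

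There is no substantive obstacle here: once Theorem~\ref{AM closure} is in hand, the three conclusions follow by the same elementary manipulations as in the $\mathcal{AN}$-closure case, the only point requiring attention being the reduction to $\beta>0$ via the hypothesis $T\notin \mathcal{K}(H_{1},H_{2})$.
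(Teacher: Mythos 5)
Your proposal is correct and matches the paper's proof exactly: the paper likewise invokes Theorem~\ref{AM closure} to get $|T|=\beta I+K_1$ and then simply says to repeat the steps of Corollary~\ref{properties2}, which is precisely what you carry out (with the helpful extra care of isolating the case $\beta=0$ via the hypothesis $T\notin\mathcal{K}(H_1,H_2)$). No gaps; the argument is sound.
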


\begin{proof}
	From Theorem \ref{AM closure}, we have
	\begin{equation}\label{AMpropeq1}
	|T| = \beta I + K_1,\, \text{ for some}\; \beta \geq 0\; \text{and}\; K_1 \in \mathcal{K}(H).
	\end{equation}
	Hence following the similar steps as in Corollary \ref{properties2}, we get the required results.
\end{proof}
\begin{remark} \label{CompactinAMclos}
	If $T \in \mathcal{K}(H)$, Then $T \in \overline{\mathcal{AM}(H)}$. But in general, a compact operator need not be in $\mathcal{AM}(H)$. In fact, a compact operator is minimum attaining if and only if it is not one-to-one.
\end{remark}
 
 \begin{prop}
 If $W\in \mathcal B(H_1,H_2)$ is a partial isometry, then  $W \in \mathcal{AM}(H_1,H_2)$ if and only if $W\in \overline{\mathcal{AM}(H_1,H_2)}$.
 \end{prop}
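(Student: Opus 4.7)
The forward direction is immediate from the definition of closure, so the substance lies in the converse. My plan mirrors the argument used for Corollary \ref{property3} in the $\mathcal{AN}$-setting: I would assume $W\in \overline{\mathcal{AM}(H_1,H_2)}$ is a partial isometry and split into two cases based on whether or not $W$ is compact, in each case showing that $W^*W\in \mathcal{AM}(H_1)$, from which $W\in \mathcal{AM}(H_1,H_2)$ follows by the $\mathcal{AM}$-analogue of Theorem \ref{T*T AN}, namely \cite[Theorem 5.14]{GRS}.

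For the first case, assume $W\in \mathcal{K}(H_1,H_2)$. Since $W$ acts as an isometry on $N(W)^{\perp}$, the closed unit ball of $N(W)^{\perp}$ is mapped bijectively and isometrically into the precompact set $W(\text{ball of } H_1)$; therefore the unit ball of $N(W)^{\perp}$ is itself relatively compact, which forces $N(W)^{\perp}$ to be finite dimensional. Consequently $W^*W = P_{N(W)^{\perp}}$ is of finite rank, and the trivial decomposition $W^*W = 0\cdot I - 0 + W^*W$, with $\alpha=0$, $K=0\in\mathcal{K}(H_1)_{+}$, $F=W^*W\in\mathcal{F}(H_1)_{+}$, satisfies $\|K\|\leq \alpha$ and $KF=FK=0$. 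By Theorem \ref{positive AM} this yields $W^*W\in \mathcal{AM}(H_1)_{+}$.

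For the second case, assume $W\notin \mathcal{K}(H_1,H_2)$. Then by Corollary \ref{AMprop}(\ref{AMfdmlkernel}), $N(W)$ is finite dimensional, so $P_{N(W)}$ is of finite rank. Writing $W^*W = I - P_{N(W)}$ and taking $\alpha = 1$, $K = P_{N(W)}\in\mathcal{K}(H_1)_{+}$ (with $\|K\|\leq 1=\alpha$) and $F=0$ produces a representation matching Theorem \ref{positive AM}, so again $W^*W\in\mathcal{AM}(H_1)_{+}$. In either case \cite[Theorem 5.14]{GRS} gives $W\in\mathcal{AM}(H_1,H_2)$.

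The only mild obstacle is the compact case: one must recognize that compactness of a partial isometry forces its initial space $N(W)^{\perp}$ to be finite dimensional, and then correctly package $W^*W$ into the rigid form required by Theorem \ref{positive AM} (in particular satisfying $\|K\|\leq\alpha$ and $KF=FK=0$). Both of these are routine, so no genuinely delicate step is expected.
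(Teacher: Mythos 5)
Your proof is correct, and its skeleton (trivial forward direction; converse by splitting on whether $W$ is compact, with Corollary \ref{AMprop}(\ref{AMfdmlkernel}) handling the non-compact case) coincides with the paper's. The difference is in how each case is closed. The paper simply notes that a compact partial isometry is finite rank, hence $\mathcal{AM}$, and in the non-compact case invokes \cite[Theorem 3.10]{CarvajalNevesAM}, which says directly that a partial isometry with finite-dimensional null space is an $\mathcal{AM}$-operator. You instead route both cases through $W^*W=P_{N(W)^{\perp}}$, verify the rigid decomposition of Theorem \ref{positive AM} by hand (with $\alpha=0$, $F=W^*W$ in the compact case and $\alpha=1$, $K=P_{N(W)}$ in the non-compact case), and then pull back to $W$ via \cite[Theorem 5.14]{GRS}. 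Your route is slightly longer but buys self-containment: it uses only results already quoted and used elsewhere in the paper (Theorem \ref{positive AM} and the $T^*T$ reduction), whereas the paper leans on an external characterization of $\mathcal{AM}$ partial isometries; your explicit Riesz-type argument that a compact partial isometry has finite-dimensional initial space also makes transparent a step the paper leaves implicit. One cosmetic caveat: Theorem \ref{positive AM} is stated for infinite-dimensional $H$, so strictly speaking the (trivial) finite-dimensional case of $H_1$ should be set aside first, but this does not affect the substance.
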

	\begin{proof}
		Let $W \in \mathcal{AM}(H_1, H_2)$. Then clearly $W \in \overline{\mathcal{AM}(H_1, H_2)}$.
		
	On the other hand let  $W \in \overline{\mathcal{AM}(H_1,H_2)}$. If $W \in \mathcal{K}(H_1, H_2)$, then $W \in \mathcal{F}(H_1,H_2)$ which further implies $W \in \mathcal{AM}(H_1,H_2)$. If $W \notin \mathcal{K}(H_1, H_2)$, then by Corollary \ref{AMprop}(\ref{AMfdmlkernel}) we have, $N(W)$ is finite dimensional. Hence by \cite[Theorem 3.10]{CarvajalNevesAM}, $W \in \mathcal{AM}(H_1,H_2)$.
	\end{proof}

\begin{theorem}\label{equalityofclosures}
$\overline{\mathcal{AN}(H_1,H_2)}=\overline{\mathcal{AM}(H_1.H_2)}$.
  \end{theorem}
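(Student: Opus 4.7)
The plan is to establish mutual inclusion, using the two structure theorems (Theorem \ref{general AN-closure structure} for the $\mathcal{AN}$-closure and Theorem \ref{AM closure} for the $\mathcal{AM}$-closure) to reduce everything to the form $\alpha W + K$ with $W$ a partial isometry and $K$ compact. The crux will be observing that in the non-compact case the partial isometry $W$ lies in \emph{both} $\mathcal{AN}(H_1,H_2)$ and $\mathcal{AM}(H_1,H_2)$, so we may switch freely between the approximating corollaries.

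For the inclusion $\overline{\mathcal{AN}(H_1,H_2)}\subseteq \overline{\mathcal{AM}(H_1,H_2)}$: fix $T\in \overline{\mathcal{AN}(H_1,H_2)}$. If $T$ is compact, then by Remark \ref{CompactinAMclos} it already lies in $\overline{\mathcal{AM}(H_1,H_2)}$. Otherwise, Theorem \ref{general AN-closure structure} gives a decomposition $T=\alpha W + K$ with $\alpha\geq 0$, $K\in\mathcal{K}(H_1,H_2)$ and $W$ the partial isometry from the polar decomposition; by Corollary \ref{properties2}(\ref{fdmlkernel}), $N(W)=N(T)$ is finite dimensional, so by the characterization of $\mathcal{AM}$-partial isometries (\cite[Theorem 3.10]{CarvajalNevesAM}) we have $W\in \mathcal{AM}(H_1,H_2)$. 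Applying Corollary \ref{parisoAM+com} yields $T=\alpha W + K \in \overline{\mathcal{AM}(H_1,H_2)}$.

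The reverse inclusion is symmetric. Let $T\in \overline{\mathcal{AM}(H_1,H_2)}$. If $T$ is compact, then $T\in \mathcal{K}(H_1,H_2)\subseteq \mathcal{AN}(H_1,H_2)\subseteq \overline{\mathcal{AN}(H_1,H_2)}$. Otherwise, Theorem \ref{AM closure} gives $T=\beta W + K$ with $\beta\geq 0$, $K$ compact, and $W$ the partial isometry in the polar decomposition of $T$; Corollary \ref{AMprop}(\ref{AMfdmlkernel}) forces $N(W)=N(T)$ to be finite dimensional, whence $W\in \mathcal{AN}(H_1,H_2)$ by \cite[Proposition 3.14]{CarvjalNeves}. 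An application of Corollary \ref{partialisometryAN+compact} then places $T$ in $\overline{\mathcal{AN}(H_1,H_2)}$.

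I do not expect a serious obstacle here: both structure theorems are already in hand, and the only point requiring care is that a partial isometry with finite-dimensional kernel belongs simultaneously to $\mathcal{AN}$ and $\mathcal{AM}$, which is exactly what the cited characterizations \cite[Proposition 3.14]{CarvjalNeves} and \cite[Theorem 3.10]{CarvajalNevesAM} provide. The compact case must be handled separately because in that case $\alpha$ or $\beta$ may be zero and the partial isometry $W$ might not itself lie in $\mathcal{AM}$ (injective compact operators are not minimum attaining), but the trivial inclusions $\mathcal{K}\subseteq \mathcal{AN}\subseteq \overline{\mathcal{AN}}$ and $\mathcal{K}\subseteq \overline{\mathcal{AM}}$ (Remark \ref{CompactinAMclos}) cover this degenerate situation cleanly.
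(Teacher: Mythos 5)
Your proposal is correct and follows essentially the same route as the paper's own proof: both directions reduce to the structure theorems giving $T=\alpha W+K$, handle the compact case separately, and use the finite-dimensionality of $N(W)$ together with \cite[Theorem 3.10]{CarvajalNevesAM} and \cite[Proposition 3.14]{CarvjalNeves} to place the partial isometry in the other class before invoking Corollaries \ref{parisoAM+com} and \ref{partialisometryAN+compact}. No gaps; this matches the paper's argument step for step.
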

  \begin{proof}
   Let $T \in \overline{\mathcal{AN}(H_1,H_2)}$  with the polar decomposition  $T = W|T|$. Then by Theorem \ref{general AN-closure structure}, we have $T = \alpha W +K$ for some $\alpha \geq 0$ and $K \in \mathcal{K}(H_1, H_2)$. If $T \in \mathcal{K}(H_1, H_2)$, then $\alpha =0$ and hence $T \in \overline{\mathcal{AM}(H_1, H_2)}$ [Remark \ref{CompactinAMclos}]. If $T \notin \mathcal{K}(H_1, H_2)$ then by Corollary \ref{properties2}(\ref{fdmlkernel}), we have $N(T)$ is finite dimensional so $N(W)$ is finite dimensional and hence by \cite[Theorem 3.10]{CarvajalNevesAM}, we get $W \in \mathcal{AM}(H_1,H_2)$. Therefore $T = \alpha W +K \in \overline{\mathcal{AM}(H_1, H_2)}$ by Corollary \ref{parisoAM+com}.
   
  Conversely  let $T \in \overline{\mathcal{AM}(H_1,H_2)}$ and $T = W|T|$ be the polar decomposition of $T$. Then by Theorem \ref{AM closure}, we have $T = \beta W +K$ for some $\beta \geq 0$ and $K \in \mathcal{K}(H_1, H_2)$. If $T \in \mathcal{K}(H_1, H_2)$, then $T \in \overline{\mathcal{AN}(H_1, H_2)}$. If $T \notin \mathcal{K}(H_1, H_2)$ then by Corollary \ref{AMprop}(\ref{AMfdmlkernel}), we have $N(T)$ is finite dimensional so $N(W)$ is finite dimensional and hence by \cite[Proposition 3.14]{CarvjalNeves}, we get $W \in \mathcal{AN}(H_1,H_2)$. Therefore by Corollary \ref{partialisometryAN+compact}, $T = \beta W +K \in \overline{\mathcal{AN}(H_1, H_2)}$.
   \end{proof}
	 Hence we have the following results for the operators in $\overline{\mathcal{AM}(H)}$ which are similar to the operators in $\overline{\mathcal{AN}(H)}$.
	\begin{theorem}
			Let $T \in \mathcal{B}(H_1,H_2)$. The following results hold true.
				\begin{enumerate}
					\item $T \in \overline{\mathcal{AM}(H_1,H_2)}$
					\item $|T| \in \overline{\mathcal{AM}(H_1)}$
				    \item 	$T^*T \in \overline{\mathcal{AM}(H_1)}$.					
				\end{enumerate}
\end{theorem}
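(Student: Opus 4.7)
The statement is clearly meant to assert that conditions (1), (2), (3) are mutually equivalent (mirroring Lemma \ref{|T|AN-closure} and Theorem \ref{T*T in AN-closure} in the $\mathcal{AN}$-closure setting). My plan is to deduce the equivalence essentially for free from the identity $\overline{\mathcal{AN}(H_1,H_2)} = \overline{\mathcal{AM}(H_1,H_2)}$ established in Theorem \ref{equalityofclosures}, applied once to the pair $(H_1,H_2)$ and once to $H_1$ alone.

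First I would invoke Theorem \ref{equalityofclosures} in both settings to replace $\overline{\mathcal{AM}}$ by $\overline{\mathcal{AN}}$ in each of the three conditions. Concretely, (1) is the same statement as $T\in \overline{\mathcal{AN}(H_1,H_2)}$, condition (2) is the same as $|T|\in\overline{\mathcal{AN}(H_1)}$, and condition (3) is the same as $T^{*}T\in\overline{\mathcal{AN}(H_1)}$. So the theorem will be reduced to the equivalences already established for the $\mathcal{AN}$-closure.

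Next I would apply Lemma \ref{|T|AN-closure} to conclude (1) $\Longleftrightarrow$ (2), and apply Theorem \ref{T*T in AN-closure} to conclude (1) $\Longleftrightarrow$ (3). Chaining these gives the full mutual equivalence of (1), (2), (3).

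There is no genuine obstacle here; the only point worth checking carefully is that Theorem \ref{equalityofclosures}, stated for pairs $(H_1,H_2)$, also applies to the single-space case $(H_1,H_1)$ so that we may rewrite condition (2) and (3). This is immediate since $H_1 = H_2$ is a special case. Thus the proof will amount to three lines, simply citing Theorem \ref{equalityofclosures}, Lemma \ref{|T|AN-closure}, and Theorem \ref{T*T in AN-closure}, and observing that the diagram of equivalences closes up.
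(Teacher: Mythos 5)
Your proposal is correct and matches the paper's intended argument exactly: the paper states this theorem without a separate proof, as an immediate consequence of Theorem \ref{equalityofclosures} combined with Lemma \ref{|T|AN-closure} and Theorem \ref{T*T in AN-closure}, which is precisely the reduction you describe. Your remark that the equality of closures applies in the single-space case $H_1=H_2$ is the only point needing a check, and you handle it correctly.
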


\begin{theorem}
Let $H$ be an infinite dimensional Hilbert space and $T\in \mathcal B(H)_{+}$. Then the following results hold true.
\begin{enumerate}
			\item If $T \in \overline{\mathcal{AM}(H)}_+$, then $T$ is diagonalizable.	
			\item $T \in \overline{\mathcal{AM}(H)}_+$ iff there exists $\beta \geq 0, K_1,K_2 \in \mathcal{K}(H)_+$ such that $ T = \beta I - K_1 + K_2$ with $ \|K_1\| \leq \beta$ and $K_1 K_2 = 0$.
			\item Let $T \in \mathcal{B}(H)$ be positive. Then $T \in \overline{\mathcal{AM}(H)}$ if and only if $\sigma_{ess}(T)$ is a singleton set.
\end{enumerate}
\end{theorem}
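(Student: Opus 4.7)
The plan is almost entirely mechanical: every one of the three assertions is the $\mathcal{AM}$-closure analogue of a result already proved for the $\mathcal{AN}$-closure, so I would simply invoke Theorem \ref{equalityofclosures} and transport the statements across the identification $\overline{\mathcal{AM}(H)}=\overline{\mathcal{AN}(H)}$.

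First I would observe the following minor but needed point: since $\mathcal{B}(H)_+$ is norm-closed in $\mathcal{B}(H)$, and since Theorem \ref{equalityofclosures} gives the equality of the two closures inside $\mathcal{B}(H)$, intersecting with $\mathcal{B}(H)_+$ yields
\begin{equation*}
\overline{\mathcal{AM}(H)}_+ \;=\; \overline{\mathcal{AM}(H)}\cap \mathcal{B}(H)_+ \;=\; \overline{\mathcal{AN}(H)}\cap \mathcal{B}(H)_+ \;=\; \overline{\mathcal{AN}(H)}_+.
\end{equation*}
With this identification in hand, each part follows immediately from an earlier result in the paper.

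For part (1), every $T\in \overline{\mathcal{AM}(H)}_+ = \overline{\mathcal{AN}(H)}_+$ is diagonalizable by Corollary \ref{diagonalization}. For part (2), applying Theorem \ref{T positive} to $T\in \overline{\mathcal{AN}(H)}_+$ produces the decomposition $T=\beta I-K_1+K_2$ with $K_1,K_2\in \mathcal{K}(H)_+$, $K_1K_2=0$ and $K_1\leq \beta I$; the latter is equivalent to $\|K_1\|\leq \beta$ because $K_1$ is positive. The reverse direction (that such a decomposition forces $T\in \overline{\mathcal{AM}(H)}_+$) is likewise read off from Theorem \ref{T positive}. For part (3), Theorem \ref{T ess} characterizes the positive operators in $\overline{\mathcal{AN}(H)}$ as exactly those with singleton essential spectrum, which via the identification above is the desired statement for $\overline{\mathcal{AM}(H)}$.

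Because the hard work has been done in Theorem \ref{equalityofclosures} and in the structural theorems of Section 4, there is no real obstacle here; the only thing worth being careful about is the positive-cone identification at the start and the equivalence $K_1\leq \beta I \iff \|K_1\|\leq \beta$ for positive $K_1$, both of which are immediate.
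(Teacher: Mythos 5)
Your proposal is correct and matches the paper's intended argument exactly: the paper states this theorem immediately after Theorem \ref{equalityofclosures} with the remark that the results follow from the corresponding $\overline{\mathcal{AN}(H)}$ statements, which is precisely your transport via $\overline{\mathcal{AM}(H)}=\overline{\mathcal{AN}(H)}$ together with Corollary \ref{diagonalization}, Theorem \ref{T positive}, and Theorem \ref{T ess}. Your explicit remarks on intersecting with the positive cone and on $K_1\leq \beta I \iff \|K_1\|\leq \beta$ for positive $K_1$ are small details the paper leaves implicit.
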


\begin{theorem}
		Let $H$ be an infinite dimensional Hilbert space and $T \in \mathcal{B}(H)$. Then any of the following two conditions imply the third one.
	\begin{enumerate}
		\item $T \in \overline{\mathcal{AM}(H)}$.
		\item $T^* \in \overline{\mathcal{AM}(H)}$.
		\item $\sigma_{ess}(T^*T)=\sigma_{ess}(TT^*)$.
	\end{enumerate}
\end{theorem}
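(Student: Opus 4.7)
The plan is to reduce the statement to its already-established $\mathcal{AN}$-counterpart, namely Theorem \ref{2imply3condition}, by invoking the equality $\overline{\mathcal{AN}(H)}=\overline{\mathcal{AM}(H)}$ proved in Theorem \ref{equalityofclosures}. Once this substitution is made, conditions (1) and (2) here become literally the same as conditions (1) and (2) in Theorem \ref{2imply3condition}, and condition (3) is identical in both statements.

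Concretely, I would first observe that by Theorem \ref{equalityofclosures} applied with $H_1=H_2=H$,
\[
T \in \overline{\mathcal{AM}(H)} \iff T \in \overline{\mathcal{AN}(H)}, \qquad T^* \in \overline{\mathcal{AM}(H)} \iff T^* \in \overline{\mathcal{AN}(H)}.
\]
Hence the three conditions of the present statement are equivalent, respectively, to the three conditions of Theorem \ref{2imply3condition}. The implications "any two imply the third" therefore transfer verbatim, yielding the result.

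If one prefers a self-contained argument without citing the combined Theorem \ref{2imply3condition}, the same reduction via Theorem \ref{equalityofclosures} lets one appeal directly to Corollary \ref{T*AN-closure} (for the implications $(1)\wedge(3)\Rightarrow(2)$ and $(2)\wedge(3)\Rightarrow(1)$) and to Proposition \ref{ess(T*T)=ess(TT^*)} (for $(1)\wedge(2)\Rightarrow(3)$), all applied in the $\mathcal{AN}$-closure setting.

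There is essentially no obstacle here: the content of the theorem is exactly the $\mathcal{AN}$-version reinterpreted, and the bridge is precisely the equality of closures, which is the nontrivial work already carried out in Theorem \ref{equalityofclosures}. The statement is thus a clean corollary rather than a fresh result, and the proof should be at most a few lines.
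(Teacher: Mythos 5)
Your proposal is correct and matches the paper's intended argument exactly: the paper states this theorem as an immediate consequence of Theorem \ref{equalityofclosures} combined with Theorem \ref{2imply3condition}, which is precisely the reduction you carry out. Nothing further is needed.
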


\begin{theorem}
 Let $T \in \overline{\mathcal{AM}(H)}$ be normal. Then there exists $(H_n, U_n)_{n \in \mathbb{N}}$, where $H_n$ is reducing a subspace for $T$, $U_n \in \mathcal{B}(H_n)$ is a unitary operator and $\lambda_n \in \sigma(|T|)$ for all $n \in \mathbb{N}$ such that
			\begin{itemize}
				\item[(a)] $H = \displaystyle \bigoplus_{n \in \mathbb{N}}H_n$
				\item[(b)] $T = \displaystyle \bigoplus_{n \in \mathbb{N}}\lambda_n U_n$.
			\end{itemize}
	\end{theorem}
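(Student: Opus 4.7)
The plan is to deduce this result directly from the analogous theorem for normal operators in $\overline{\mathcal{AN}(H)}$ proved at the end of Section 5, using the equality of the two closures established in Theorem \ref{equalityofclosures}.

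First, since $T\in \overline{\mathcal{AM}(H)}$, Theorem \ref{equalityofclosures} gives $T\in \overline{\mathcal{AN}(H)}$. By hypothesis $T$ is normal, so the structure theorem for normal operators in $\overline{\mathcal{AN}(H)}$ (the last theorem of the previous section) applies verbatim and yields the desired family $(H_n,U_n)_{n\in \mathbb N}$ with $\lambda_n\in \sigma(|T|)$, $H=\bigoplus_n H_n$ and $T=\bigoplus_n \lambda_n U_n$, with each $H_n$ reducing $T$ and each $U_n$ unitary on $H_n$.

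If one instead prefers to reprove it intrinsically, the same scheme goes through with only cosmetic changes: take the polar decomposition $T=W|T|$; normality of $T$ forces normality of $W$ and the commutation $W|T|=|T|W$. Since $|T|\in \overline{\mathcal{AM}(H)}_+$, the essential spectrum $\sigma_{ess}(|T|)$ is a singleton $\{\lambda\}$ (the $\mathcal{AM}$-analogue of Theorem \ref{T ess}), so $|T|$ is diagonalizable with isolated eigenvalues of finite multiplicity possibly accumulating at $\lambda$ from above and/or below. Setting $H_n=N(|T|-\lambda_nI)$ (and, if $\lambda$ itself is an eigenvalue of infinite multiplicity, also $H_\lambda=N(|T|-\lambda I)$) gives an orthogonal decomposition of $H$; commutation of $W$ with $|T|$ (and with $W^*$, by normality of $W$) shows each $H_n$ reduces $W$, hence also reduces $T$. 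For $\lambda_n\neq 0$, $H_n\subseteq R(|T|)\subseteq \overline{R(|T|)}=N(W)^\perp$, so $U_n:=W|_{H_n}$ is an isometry, and being normal on the finite-dimensional space $H_n$ (or, in the $\lambda$-eigenspace case, being a restriction of a normal partial isometry to a reducing subspace, hence normal with dense range that is closed) it is unitary. The zero eigenspace, if present, corresponds to $U_n=0$ on $N(T)$, which still reduces $T$.

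The main obstacle in either route is purely bookkeeping: one must account for the various possible locations of $m_e(T)=\lambda$ relative to $m(T)$ and $\|T\|$ (the three cases treated in the $\mathcal{AN}$-version), and in particular handle the zero-eigenspace separately since there $U_n$ fails to be an isometry. Since all of this is already carried out in the $\overline{\mathcal{AN}(H)}$ version, the cleanest proof is simply to invoke Theorem \ref{equalityofclosures} and cite the previous theorem.
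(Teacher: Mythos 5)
Your proposal is correct and matches the paper's own route exactly: the paper derives this theorem as an immediate consequence of Theorem \ref{equalityofclosures} together with the structure theorem for normal operators in $\overline{\mathcal{AN}(H)}$ from Section 5, which is precisely your main argument. The intrinsic re-derivation you sketch is a faithful transcription of the $\mathcal{AN}$-case proof and is not needed, but it contains no errors.
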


We have already studied the structure of normal operators in $\overline{\mathcal{AN}(H)}$. A lot of studies are done about the properties of non-normal operators and one of the important classes of non-normal operators are hyponormal operators i.e. $T^*T - TT^* \geq 0$. So we end this section with the following question.

\begin{question}
What is the representation of $T \in \overline{\mathcal{AN}(H)}$, when $T$ is hyponormal?
\end{question}

\section*{Acknowledgement}
The first author's work is supported by SERB Grant No. MTR/2019/001307,
Govt. Of India. The second author is thankful for the financial support from the  Department of Science and Technology - INSPIRE Fellowship (Grant No. DST/INSPIRE FELLOWSHIP/2018/IF180107). The authors thank Prof. Hiroyuki Osaka for his suggestions on the manuscript.

\bibliographystyle{amsplain}

\end{document}